\newcommand{\vol}{\operatorname{vol}}
\newcommand{\dif}{{\mathrm d}}
\newcommand{\exterioderivative}{{\mathrm d}}
\newcommand{\exterioderivativex}{{\mathrm d}x}
\newcommand{\trace}{\operatorname{tr}}
\newcommand{\Ext}{\operatorname{Ext}}
\newcommand{\Id}{\operatorname{Id}}
\newcommand{\Interpolant}{\mathcal I}
\newcommand{\supp}{\operatorname{supp}}
\newcommand{\grad}{\operatorname{grad}}
\newcommand{\curl}{\operatorname{curl}}
\newcommand{\divergence}{\operatorname{div}}
\newcommand\suchthat{%
 \@ifstar
  {\mathrel{}\middle|\mathrel{}}
  {\mid}%
}
\DeclareMathOperator*{\esssup}{esssup}
\newtheorem{theorem}{Theorem}[section]
\newtheorem{assumption}[theorem]{Assumption}
\newtheorem{remark}[theorem]{Remark}
\newtheorem{corollary}[theorem]{Corollary}
\begin{document}

\title
[Local Finite Element Approximation of Sobolev Differential Forms]
{Local Finite Element Approximation\\of Sobolev Differential Forms}
\thanks{EG was supported by NSF grants DMS-1703719 and DMS-2012427. 
This material is based upon work supported by the National Science Foundation 
under Grant No. DMS-1439786 while the third author was in residence at the Institute for Computational and 
Experimental Research in Mathematics in Providence, RI, during the ``Advances in Computational Relativity'' program.}

\author{Evan S.\ Gawlik}
\address{Department of Mathematics, University of Hawaii at Manoa}
\email{egawlik@hawaii.edu}

\author{Michael J.\ Holst}
\address{UCSD, Department of Mathematics, La Jolla, California 92093-0112, USA}
\email{mholst@math.ucsd.edu}

\author{Martin W.\ Licht}
\address{ICERM, Brown University, Providence, Rhode Island 02903, USA}
\email{martin\_licht@brown.edu}

\subjclass[2000]{65N30}

\keywords{broken Bramble-Hilbert lemma, finite element exterior calculus, Cl\'ement interpolant, Scott-Zhang interpolant}

\begin{abstract}
 We address fundamental aspects in the approximation theory of vector-valued finite element methods,
 using finite element exterior calculus as a unifying framework. 
 We generalize the Cl\'ement interpolant and the Scott-Zhang interpolant to finite element differential forms,
 and we derive a broken Bramble-Hilbert Lemma.
 Our interpolants require only minimal smoothness assumptions
 and respect partial boundary conditions. 
 This permits us to state local error estimates in terms of the mesh size.  
 Our theoretical results apply to curl-conforming and divergence-conforming finite element methods over simplicial triangulations. 
\end{abstract}

\maketitle

\section{Introduction}

With this article we contribute to an aspect of vector-valued finite element methods 
which has seen increasing interest throughout recent years, 
namely the detailed study of quantitative approximation estimates. 
More specifically, we construct and analyze analogues of the Cl\'ement interpolant and the Scott-Zhang interpolant
for vector-valued finite element methods over simplicial meshes. 
We present our results in the framework of \emph{finite element exterior calculus} (FEEC).

One of the classical results in finite element theory is the quasi-optimality of the finite element solution: 
the Galerkin approximation is just as good as the best approximation, up to a generic constant. 
This is well-known for the vector-valued finite element spaces that have enjoyed popularity in numerical electromagnetism long since.
However, not much is known about the quantitative approximation estimates in terms of the mesh size, in sharp contrast to the scalar-valued setting. 
Only recently have publications started to address this topic in the vector-valued setting; see the literature review further down this introduction. 
The most classical convergence theorem in the finite element analysis for the Poisson problem asserts that the Galerkin error vanishes in the $L^{2}$ norm by the order $\mathcal O(h^{s})$,
where $s > 1$ denotes the Sobolev smoothness of the true solution.
Generally speaking, $s$ can be arbitrarily close to $1$. 
This classical estimate can be proven with the Cl\'ement interpolant. 

As the Cl\'ement interpolant~(see~\cite{clement1975approximation}) is arguably one of the most classical tools in numerical analysis,
the first goal of this article is extending the Cl\'ement interpolant to vector-valued finite element spaces. 
For that purpose we introduce a biorthogonal system of bases and degrees of freedom. 
This is a technical tool of interest on its own. 
If the finite element space is not subject to boundary conditions, 
then the generalization from the scalar-valued case may be regarded as a mere technical note.
This might partially explain why previous publications have not given much attention to this topic.

However, the generalization to finite element differential forms (and thus vector-valued finite elements)
is not quite as trivial as one might think when homogeneous boundary values are imposed. 
As in the scalar-valued case of Cl\'ement's original publication, 
the Cl\'ement interpolant is modified by leaving out the corresponding degrees of freedom along the boundary. 
But while there are numerous tricks in the literature to derive Bramble-Hilbert-type error estimates for Lagrange elements with boundary conditions, 
this is more than a mere technicality in finite element vector calculus. 
Our solution is to reformulate the degrees of freedom as momenta over facets of the triangulation. 
Thus we extend the degrees of freedom to differential forms with only minimal regularity assumptions that allow a meaningful notion of trace:
we assume that both the differential form and its exterior derivative are integrable.
This regularity assumption is natural since it also sufficient to define homogeneous boundary traces in a generalized sense, namely via an integration by parts formula. 

Incidentally, extending the degrees of freedom to differential forms with rough coefficients allows us to generalize another classical concept to finite element exterior calculus: 
we construct a Scott-Zhang-type interpolant. 
The Scott-Zhang interpolant (see~\cite{scott1990finite}) is a local interpolant onto the finite element space which respects homogeneous boundary conditions. 
We replicate that interpolant in finite element exterior calculus. 
Apart from momenta over full-dimensional simplices, the Scott-Zhang-type interpolant also requires integrals along facets.
Thus it is only well-defined for differential forms that allow traces onto facets.

Additionally, the ideas of the Scott-Zhang interpolant have recently been instrumental in proving a broken Bramble-Hilbert lemma for Lagrange elements. 
Using Veeser's exposition~\cite{veeser2016approximating} as a primary source, we show a broken Bramble-Hilbert lemma for finite element differential forms. 
Our Scott-Zhang-type interpolant uses only momenta over full-dimensional cells and facets. 
Prospective applications of this broken Bramble-Hilbert lemma include the convergence theory of finite element exterior calculus over surfaces and manifolds. 
We leave this for future research. The remainder of this introduction provides further context for our research and a partial review of the literature. 
\\

The Hodge-Laplace equation is the central equation in the calculus of differential forms; 
it allows a reformulation as a saddle point problem which is central in finite element exterior calculus. 
The latter formulation captures different formulations of the Maxwell system, and the Poisson problem
in primal and mixed formulation (see Hiptmair~\cite{hiptmair2002finite} and Arnold, Falk, and Winther~\cite{AFW1}). 
It shows that the analytical properties of these partial differential equations 
over a domain $\Omega$ are best described by studying the Sobolev de~Rham complexes
\begin{align}
 \label{intro:sobolevderhamcomplex}
 \begin{CD}
  \dots
  @>\exterioderivative>>
  H\Lambda^k(\Omega)
  @>\exterioderivative>>
  H\Lambda^{k+1}(\Omega)
  @>\exterioderivative>>
  \dots
 \end{CD}
\end{align}
Here, $\exterioderivative$ is the exterior derivative, and a differential $k$-form is in $H\Lambda^k(\Omega)$
if its coefficients are square integrable and its exterior derivative, initially defined in the sense of distributions, has square integrable coefficients as well. 
Specifically, the above example of a Sobolev de~Rham complex is useful for analysing the Hodge-Laplace equation with \emph{natural} boundary conditions. 
The theory of the Hodge-Laplace equation with \emph{essential} or \emph{mixed} boundary conditions has seen substantial progress only in recent years. 
For the Hodge-Laplace equation with mixed boundary conditions we study Sobolev de~Rham complexes with \emph{partial boundary conditions}
(see Gol'dshtein, Mitrea, and Mitrea~\cite{GMM}):
\begin{align}
 \label{intro:sobolevderhamcomplex:mitrand}
 \begin{CD}
  \dots
  @>\exterioderivative>>
  H\Lambda^k(\Omega,\Gamma)
  @>\exterioderivative>>
  H\Lambda^{k+1}(\Omega,\Gamma)
  @>\exterioderivative>>
  \dots
 \end{CD}
\end{align}
Here, \emph{partial boundary condition} refers to imposing homogeneous boundary conditions
along a part $\Gamma \subseteq \partial\Omega$ of the domain boundary. 
The most important results for the de Rham complex 
with either no boundary conditions (when $\Gamma=\emptyset$) 
or full boundary conditions (when $\Gamma=\partial\Omega$),
such as Rellich embedding theorems, Poincar\'e-Friedrichs inequalities, and homology space theory 
are still valid for general mixed boundary conditions 
(see also Jochmann~\cite{jochmann1997compactness,jochmann1999regularity} and Jakab, Mitrea, and Mitrea~\cite{jakab2009regularity}).

In regards to the numerical analysis for the Hodge-Laplace equation, 
\emph{finite element de~Rham complexes} mimic Sobolev de~Rham complexes as a fundamental structure on a discrete level. 
We adopt the framework of \emph{finite element exterior calculus} (Arnold, Falk, and Winther~\cite{AFW1,AFW2}) 
as unifying language for the theoretical background and the formulation of finite element methods.
A very general Galerkin theory of Hilbert complexes, 
which asserts that Galerkin approximations are quasi-optimal approximations of the solution of the Hodge-Laplace equation, 
is at our disposal once we have \emph{smoothed projections} from Sobolev de~Rham complexes onto finite element de~Rham complexes,
that is, $L^2$-bounded projections such that diagrams such as the following commute: 
\begin{align}
 \label{intro:smoothedprojectioncommutingdiagramm}
 \begin{CD}
  \dots
  @>\exterioderivative>>
  H\Lambda^k(\Omega,\Gamma)
  @>\exterioderivative>>
  H\Lambda^{k+1}(\Omega,\Gamma)
  @>\exterioderivative>>
  \dots
  \\
  @. @V{\pi^{k}}VV @V{\pi^{k+1}}VV @.
  \\
  \dots
  @>\exterioderivative>>
  \mathcal P_{r}^{-}\Lambda^{k  }(\mathcal T,\mathcal U)
  @>\exterioderivative>>
  \mathcal P_{r}^{-}\Lambda^{k+1}(\mathcal T,\mathcal U)
  @>\exterioderivative>>
  \dots
 \end{CD}
\end{align}
The widely studied special cases $\Gamma = \emptyset$ and $\Gamma = \partial\Omega$ 
correspond to either 
imposing no essential boundary conditions at all or essential boundary conditions 
along the entire boundary. 
We remark that the cohomology spaces of finite element de~Rham complexes with partial boundary conditions 
were addressed first by Licht~\cite{licht2017complexes} via purely algebraic means,
and Poincar\'e-Friedrichs constants have been addressed 
by Christiansen~and~Licht~\cite{licht2020poincare} within an algebraic framework.

Notably, the concept of commuting bounded projection from Sobolev de~Rham complexes onto finite element de~Rham complexes 
has been the dominating focus of published theoretical research on vector-valued finite element methods. 
Numerous techniques and variations are found in the literature. 
The basic idea, and its relevance to mixed finite element methods,
can at least be traced back to the work of Fortin~\cite{fortin1977analysis} on mixed methods for the Poisson problem. 
Christiansen~\cite{christiansen2007stability} introduced a bounded projection that commutes with the exterior derivative up to a controllable error. 
Arnold, Falk, and Winther~\cite{AFW1} developed a commuting $L^{2}$-bounded projection 
from the de~Rham complex without boundary conditions onto a finite element differential complex assuming quasi-uniform families of triangulations. 
Christiansen and Winther~\cite{christiansen2008smoothed} extended those ideas to the $L^{2}$ de~Rham complex with boundary conditions 
and merely shape-regular families of triangulations. 
Licht described smoothed projections for $L^{p}$ de~Rham complexes over weakly Lipschitz domains, 
first without boundary conditions~\cite{licht2019smoothed} and subsequently with partial boundary conditions~\cite{licht2019mixed};
the existence of such a projection had been stipulated previously by Bonizzoni, Buffa, and Nobile~\cite{bonizzoni2014moment}. 
A commuting bounded local projection was described by Sch\"oberl~\cite{schoberl2008posteriori} in vector-analytic language,
which was later generalized to partial boundary conditions by Gopalakrishnan and Qiu~\cite{gopalakrishnan2011partial},
and to the setting of differential forms by Demlow and Hirani~\cite{demlow2014posteriori}. 
Christiansen, Munthe-Kaas and Owren~\cite{christiansen2011topics} discussed a bounded commuting quasi-projection
that locally preserves polynomials up to specified degree. 
Falk and Winther~\cite{falk2014local} developed a commuting local projection from the $L^{2}$ de~Rham complex without boundary conditions that is bounded in $H\Lambda$-norms. 
Ern and Guermond~\cite{ern2016mollification} described an $L^{p}$-bounded commuting projection in the language of vector analysis. 
One major commonality of these operators is that they provide quasi-optimal approximations within finite element spaces 
while featuring additional properties, such as uniform bounds, commutativity with differential operators, or locality. 
One of the most important applications of these operators has been in proving quasi-optimality of Galerkin approximations in mixed finite element methods~\cite{AFW1}.

However, this does not \emph{quantify} the error of the (quasi)-optimal approximation within the finite element space.  
For example, such would provide error estimates for the finite element solution in terms of powers of the mesh size. 
Hence an additional interpolation error estimate is necessary for that last step.
Numerous results have been published, with most of the work addressing scalar-valued finite element methods only. 
The most widely known interpolation is due to Cl\'ement~\cite{clement1975approximation}.
The Cl\'ement interpolant is local, $L^{p}$-bounded and can be modified to respect homogeneous boundary conditions. 
Another milestone in the literature on quantitative interpolation estimates is the Scott-Zhang interpolant~\cite{scott1990finite}.
This operator interpolates also values over the faces (and thus boundary conditions) and is idempotent, 
however, it generally requires higher smoothness on the function than the Cl\'ement interpolant. 
Surprisingly, only a few publications study quantitative error estimates for vector-valued finite element methods. 
We mention the quasi-optimal interpolant of Ern and Guermond~\cite{ern2017finite} as the apparently first such construction in the literature. 
Their projection operator, which generalizes ideas of Oswald~\cite{oswald1993bpx} to curl- and divergence-conforming finite element spaces, 
satisfies similar local error estimates as the Cl\'ement interpolant and can be modified to satisfy homogeneous boundary conditions.
It seems their publication was the first to give quantitative error estimates
for curl-conforming and divergence-conforming finite element spaces.

Apart from quasi-interpolation error estimates for vector-valued finite element methods,
for which we study the Cl\'ement interpolant and the Scott-Zhang interpolant in finite element exterior calculus, 
we are interested in what has been in circulation as \emph{broken Bramble-Hilbert Lemma} in recent years. 
In the context of finite element methods, the broken Bramble-Hilbert Lemma for scalar-valued functions states 
that approximation by continuous piecewise polynomial functions is essentially as good as approximation by discontinuous piecewise polynomial functions 
(that is, approximation within a \emph{broken} finite element space)
under the condition that the function to be approximated satisfies some moderate continuity conditions. 
This has been investigated by Veeser~\cite{veeser2016approximating} using techniques for the Scott-Zhang interpolant,
see also Camacho and Demlow~\cite{camacho2015L2} for applications to surface finite element methods
and also Bank and Yserentant~\cite{bank2015note} for relations to a~posteriori error estimation. 
Whereas the original proof by Veeser discusses the approximation of $H^{1}$ functions with piecewise higher smoothness, 
we discuss the approximation of differential forms with $H\Lambda$ regularity with piecewise higher smoothness. 
We remark that the projection of Christiansen, Munthe-Kaas and Owren~\cite{christiansen2011topics} 
satisfies a similar result under abstract assumptions. 
The case of divergence-conforming finite element spaces has been addressed by 
Ern, Gudi, Smears, and Vohral{\'\i}k~\cite{ern2019equivalence} with a particular focus on the stability in the polynomial degree. 
A similar result for curl-conforming spaces has been shown by Chaumont-Frelet and Vohral{\'\i}k~\cite{chaumont2020equivalence}.
The aforementioned two contributions, which come closest to the research efforts in this work,
focus on the Hilbert space situation and the perspective on Veeser's result as the \emph{equivalence of global and local approximations} in the $L^{2}$ norm.
We assume the perspective on Veeser's result as a \emph{broken Bramble-Hilbert Lemma}
as in Camacho and Demlow's aforementioned contribution. Like in their case, our result is motivated the error analysis of finite element methods over surfaces and manifolds.
\\

The remainder of this article is structured as follows. 
In Section~\ref{sec:triangulation} we review notions of triangulations. 
In Section~\ref{sec:analysis} we recapitulate basic results about Sobolev differential forms. 
In Section~\ref{sec:femspaces} we review finite element spaces of differential forms. 
Section~\ref{sec:biorthogonal} discusses biorthogonal bases and degrees of freedom. 
Section~\ref{sec:clement} introduces and analyzes the Cl\'ement interpolant for differential forms. 
Section~\ref{sec:neudof} discusses another representation of the degrees of freedom. 
This is used subsequently in Section~\ref{sec:clementboundary},
where we discuss the Cl\'ement interpolant with boundary conditions, 
and in Section~\ref{sec:scottzhang},
where discuss the Scott-Zhang interpolant and the broken Bramble-Hilbert Lemma. 
Finally, Section~\ref{sec:applications} discusses a few applications in the language of vector analysis.

\section{Triangulations} \label{sec:triangulation}
We commence with gathering a few definitions concerning simplices and triangulations. 

Recall that a \emph{simplex} of dimension $d$ is the convex closure of $d+1$ affinely independent points,
which are called the \emph{vertices} of that simplex.
A simplex $F$ is a \emph{subsimplex} of a simplex $T$ if all vertices of $F$ are vertices of $T$.
For any $d$-dimensional simplex $T$ we write $\mathcal F(T)$ for the set of its \emph{facets},
which are the $d+1$ subsimplices of $T$ sharing all but one vertex with $T$. 
More generally, $\Delta_{d}(T)$ is the set of $d$-dimensional simplices of $T$,
and we write $\Delta(T)$ for the set of subsimplices of $T$.

A \emph{simplicial complex} is a collection $\mathcal T$ of simplices that is closed under taking subsimplices
and 
for which the intersection of any two subsimplices $T, T' \in \mathcal T$ 
is either empty or a common subsimplex of $T$ and $T'$.
We say that $\mathcal T$ is $n$-dimensional if
every simplex $T \in \mathcal T$ is a subsimplex of an $n$-dimensional simplex of $\mathcal T$. 
A \emph{simplicial subcomplex} of $\mathcal T$ is any simplicial complex $\mathcal U \subseteq \mathcal T$. 
We write $\Delta_{d}(\mathcal T)$ for the set of $d$-dimensional simplices of $\mathcal T$.

All simplices are assumed to have a fixed orientation. 
Whenever $T$ is a simplex and $F \in \mathcal F(T)$, 
then we set $o(F,T) = 1$ if the orientation of $F$ is induced from $T$ and we set $o(F,T) = -1$ otherwise. 

We introduce another combinatorial condition on the simplicial complex,
discussed in~\cite{veeser2016approximating}. 
We call a finite simplicial complex $\mathcal T$ \emph{face-connected}
whenever 
for all $n$-dimensional simplices $T_0, T \in \mathcal T$ with non-empty intersection, 
there exists a sequence $T_1,\dots,T_N$ of $n$-dimensional simplices of $\mathcal T$
with $T_N = T$, and such that 
for all $1 \leq i \leq N$ we have that $F_{i} = T_{i} \cap T_{i-1}$
satisfies $F_{i} \in \mathcal F(T_{i}) \cap \mathcal F(T_{i-1})$ and $T_0 \cap T \subseteq F_{i}$. 
For example, any simplicial complex that triangulates a domain is face-connected. 

For any simplex $T$ of positive dimension $d$ we let 
$h_T$ and $\vol^{d}(T)$ be its diameter and its $d$-dimensional Hausdorff volume, respectively. 
We call $\mu(T) = h_{T}^{d} / \vol^{d}(T)$ the \emph{shape measure} of $T$. 
The shape measure $\mu(\mathcal T)$ of any simplicial complex $\mathcal T$ is the supremum 
of the shape measures of all its non-vertex simplices. 
Generally speaking, a high shape measure indicates degeneracy of simplices. 
To simplify some technical arguments, 
we write $h_{V}$ for the minimum length of any edge adjacent to some vertex $V \in \mathcal T$. 

For any $T \in \mathcal{T}$ we introduce the two sets 
\begin{gather} \label{math:simplexenvironmenten}
 {U}_{T,\mathcal T}^{\ast} = \bigcup_{ \substack{ T' \in \Delta_{n}(\mathcal{T}) \\ T' \cap T \neq \emptyset } } T',
 \qquad 
 {U}_{T,\mathcal T} = \bigcup_{ \substack{ T' \in \Delta_{n}(\mathcal{T}) \\ T \subseteq T' } } T'. 
\end{gather}
Note that ${U}_{T,\mathcal T} \subseteq {U}_{T,\mathcal T}^{\ast}$.
We remark that the ratio of diameters of adjacent simplices as well as the number of simplices 
entering the unions in \eqref{math:simplexenvironmenten} can be bounded in terms of the shape measure. 

\begin{remark}
 In the analysis of finite element methods, one is commonly interested in results that are valid for families
 of algorithmically constructed triangulations. These triangulations typically satisfy uniform bounds 
 on the mesh constants introduced above.
\end{remark}

\section{Background in Analysis} \label{sec:analysis}

In this section we recapitulate notions and results from the analysis of Sobolev spaces and exterior calculus.
Our focus here are the Sobolev-Slobodeckij spaces, sometimes also referred to as \emph{fractional Sobolev spaces} 
\cite{slobodeckij1958generalized,di2012hitchhiker},
and the calculus of differential forms with coefficients in said Sobolev-Slobodeckij spaces~\cite{scott1995Lp,iwaniec1999nonlinear,mitrea2002traces,GMM}.
Although we are initially only working over domains, 
most notions in this section also apply to the analysis on simplices.

For the remainder of this section, let $\Omega \subseteq \mathbb R^{n} $ be a domain.
\\

We use standard notations for function spaces in this article. 
$C^{\infty}(\Omega)$ is the space of smooth functions over $\Omega$ 
and 
$C^{\infty}(\overline \Omega)$ is the space of restrictions of smooth functions over the Euclidean space onto $\Omega$. 
We write $C_{c}^{\infty}(\Omega)$ for the space of smooth functions with support contained compactly in $\Omega$.
Next, $L^{p}(\Omega)$ is the Lebesgue space over $\Omega$ to the integrability exponent $p \in [1,\infty]$, 
equipped with the norm ${\|\cdot\|}_{L^{p}(\Omega)}$.

Here and in the sequel, 
$A(n)$ is the set of all multiindices over $\{1,\dots,n\}$. 
For any $m \in \mathbb N_0$,  
let $W^{m,p}(\Omega)$ be the \emph{Sobolev space} of measurable functions over $\Omega$ 
for which all distributional $\alpha$-th derivatives with $\alpha \in A(n)$ and $|\alpha| \leq m$ are functions in $L^{p}(\Omega)$.
We recall the norm ${\|\cdot\|}_{W^{m,p}(\Omega)}$ and the seminorm ${|\cdot|}_{W^{m,p}(\Omega)}$,
whose definitions for every $\omega \in W^{m,p}(\Omega)$ are 
\begin{align} \label{math:sobolevnormen}
 \| \omega \|_{W^{m,p}(\Omega)} 
 &:=
 \sum_{ \substack{ \alpha \in A(n) \\ |\alpha| \leq m } } 
 \| \partial^{\alpha} \omega \|_{L^{p}(\Omega)},
 \quad 
 | \omega |_{W^{m,p}(\Omega)} 
 :=
 \sum_{ \substack{ \alpha \in A(n) \\ |\alpha| = m } } 
 \| \partial^{\alpha} \omega \|_{L^{p}(\Omega)}.
\end{align}

In order to define Sobolev-Slobodeckij spaces,
with which one generalizes the idea of the Sobolev space to positive non-integer order,
we let $\theta \in (0,1)$ and define the seminorms
\begin{align*}
 |\omega|_{W^{m+\theta,p}(\Omega)}
 &:=
 \sum_{ \substack{ \alpha \in A(n) \\ |\alpha| = m } }
 \left( \int_{\Omega} \int_{\Omega}
  \dfrac{ | \partial^{\alpha} \omega(x) - \partial^{\alpha} \omega(y) |^{p} }{ | x - y |^{n+p\theta} }
 \dif x \dif y \right)^{\frac 1 p}
 ,
 \quad
 \omega \in W^{m,p}(\Omega), \quad p < \infty,
 \\
 |\omega|_{W^{m+\theta,\infty}(\Omega)}
 &:=
 \sum_{ \substack{ \alpha \in A(n) \\ |\alpha| = m } }
 \esssup\limits_{ (x,y) \in \Omega \times \Omega } 
  \dfrac{ | \partial^{\alpha} \omega(x) - \partial^{\alpha} \omega(y) | }{ | x - y |^{\theta} }
 ,
 \quad
 \omega \in W^{m,p}(\Omega)
 .
\end{align*}
Accordingly, we define the Sobolev-Slobodeckij norm
\begin{align} \label{math:sobolevslobodeckijnormen}
 \|\omega\|_{W^{m+\theta,p}(\Omega)}
 :=
 \|\omega\|_{W^{m,p}(\Omega)}
 +
 |\omega|_{W^{m+\theta,p}(\Omega)}
\end{align}
and let $W^{m+\theta,p}(\Omega)$ denote the Banach space of measurable functions
for which ${\|\cdot\|}_{W^{m+\theta,p}(\Omega)}$ is bounded.
This space is called the \emph{Sobolev-Slobodeckij} space.
\\

We let $C^{\infty}\Lambda^{k}(\Omega)$ and $C^{\infty}\Lambda^{k}(\overline \Omega)$ 
be the spaces of differential $k$-forms with coefficients in $C^{\infty}(\Omega)$ and $C^{\infty}(\overline \Omega)$, respectively.
The space of smooth compactly supported differential forms $C_{c}^{\infty}\Lambda^{k}(\Omega)$ is defined analogously. 
The spaces $L^{p}\Lambda^{k}(\Omega)$ and $W^{s,p}\Lambda^{k}(\Omega)$ are defined accordingly 
for any $p \in [1,\infty]$ and $s \in [0,\infty)$
and one writes ${\|\cdot\|}_{L^{p}\Lambda^{k}(\Omega)}$, ${\|\cdot\|}_{W^{s,p}\Lambda^{k}(\Omega)}$, and ${|\cdot|}_{W^{s,p}\Lambda^{k}(\Omega)}$
for the corresponding norms and seminorms. 

The exterior product $\omega \wedge \eta$ of a $k$-form $\omega$ and an $l$-form $\eta$
is bilinear in each argument and satisfies the identity $\omega \wedge \eta = (-1)^{kl} \eta \wedge \omega$. 
The exterior derivative is a differential operator between differential forms.
One defines 
\begin{align} \label{math:exteriorderivative}
 \exterioderivative\omega = \sum_{i=1}^{n} \exterioderivativex_{i} \wedge \partial_{i} \omega,
 \quad 
 \omega \in C^{\infty}\Lambda^{k}(\Omega).
\end{align}
An important identity is the Leibniz rule 
\begin{align} \label{math:leibnizrule}
 \exterioderivative\left( \omega \wedge \eta \right)
 =
 \exterioderivative\omega \wedge \eta + (-1)^{k} \omega \wedge \exterioderivative\eta,
 \quad 
 \omega \in C^{\infty}\Lambda^{k}(\Omega),
 \quad
 \eta \in C^{\infty}\Lambda^{l}(\Omega).
\end{align}
The exterior derivative of differential forms with coefficients in Lebesgue spaces 
is defined a priori in the sense of distributions. 
A particular class of differential $k$-forms which is of interest in this article is 
\begin{align} \label{math:Wpqspace}
 \mathcal W^{p,q}\Lambda^{k}(\Omega)
 :=
 \left\{\; 
  \omega \in L^{p}\Lambda^{k}(\Omega)
  \suchthat* 
  \exterioderivative\omega \in L^{q}\Lambda^{k+1}(\Omega)
 \;\right\},
 \quad 
 p,q \in [1,\infty].
\end{align}
Our interest in $\mathcal W^{p, q}\Lambda^{k}(\Omega)$ is based on 
that these differential forms, although they have a very low regularity,
allow a meaningful trace theory.
It should be noted that $\mathcal W^{2,2}\Lambda^{k}(\Omega)$ is exactly the Hilbert space $H\Lambda^{k}(\Omega)$,
which is the centre of interest of many publications on finite element exterior calculus.
\\

We remark that, if $S$ is any simplex in $\mathbb R^{n}$ of any dimension $d$, 
one can set up the calculus of differential forms as well,
using the coordinate system of the affine subspace corresponding to $S$. 
We will only need the space $C^{\infty}\Lambda^{k}(S)$ and subspaces of it, 
and leave out the technical details, which are straight-forward. 
We remark that the integral $\int_{S} \omega$ of any integrable $k$-form over a $k$-dimensional simplex $S$ is well-defined.
The trace from any simplex $S$ onto any of its subsimplices $F \in \Delta(S)$ is written $\trace_{S,F}$ in this article.
We also write $\trace_{S}$ for the trace onto any simplex $S$ whenever this well-defined;
there will be no ambiguity in this article regarding this.
\\

We are interested in spaces of differential forms 
that satisfy homogeneous boundary conditions, in a sufficiently generalized sense,
along some subset $\Gamma \subseteq \partial \Omega$ of the domain boundary. 
We refer to such boundary conditions as \emph{partial boundary conditions}.
Our definition of such partial boundary conditions follows Gol'dshtein, Mitrea, and Mitrea (see Definition~3.3 of~\cite{GMM})
and Fernandes and Gilardi~\cite{fernandes1997magnetostatic}, building upon an integration by parts identity. 

Formally, 
when $\Gamma \subseteq \partial \Omega$ is a relatively open subset of $\partial \Omega$, 
then the space $\mathcal W^{p,q}\Lambda^k(\Omega,\Gamma)$ is defined as 
the subspace of $\mathcal W^{p,q}\Lambda^{k}(\Omega)$ whose members adhere to the following condition: 
we have $\omega \in \mathcal W^{p,q}\Lambda^k(\Omega,\Gamma)$ if and only if
for all $x \in \Gamma$ there exists $\rho > 0$ such that
over the open ball $B_{\rho}(x) \subseteq \mathbb R^{n}$ of radius $\rho > 0$ around $x$
we have the identity 
\begin{align} \label{math:partialboundaryconditions}
 \int_{\Omega \cap B_{\rho}(x)} \omega \wedge \exterioderivative \eta
 =
 (-1)^{k+1}
 \int_{\Omega \cap B_{\rho}(x)} {\exterioderivative \omega} \wedge \eta,
 \quad 
 \eta \in C^{\infty}_c\Lambda^{n-k-1}\left(B_{\rho}(x)\right).
\end{align}
One sees immediately that every $\omega \in C^{\infty}(\overline \Omega)$
that vanishes along $\Gamma$ satisfies this identity. 
Formally, this definition of homogeneous boundary values requires no assumptions 
on the regularity of $\partial \Omega$, and thus we circumvent the discussion of traces, 
but of course one has to be careful in which circumstances the general above definition is mathematically helpful. 

One notices that $\mathcal W^{p,q}\Lambda^k(\Omega,\Gamma)$ is a closed subspace of $\mathcal W^{p,q}\Lambda^{k}(\Omega)$. 
We also say that $\omega \in \mathcal W^{p,q}\Lambda^k(\Omega,\Gamma)$ satisfies partial boundary conditions along $\Gamma$.
The definition implies that 
\begin{align}
 \exterioderivative \mathcal W^{p,q}\Lambda^{k}(\Omega,\Gamma) \subseteq \mathcal W^{q,r}\Lambda^{k+1}(\Omega,\Gamma),
 \quad 
 p,q,r \in [1,\infty].
\end{align} 
In other words, 
if a differential form satisfies partial boundary conditions along $\Gamma$,
then its exterior derivative satisfies partial boundary conditions along $\Gamma$, too. 
\\

\begin{remark}
Spaces of differential forms constitute differential complexes which are known as \emph{de~Rham complexes} in the literature. 
For example, writing $H\Lambda^{k}(\Omega,\Gamma) = \mathcal W^{2,2}\Lambda^{k}(\Omega,\Gamma)$, 
consider the differential complex 
\begin{align}
 \label{math:sobolevderhamcomplex}
 \begin{CD}
  \dots
  @>\exterioderivative>>
  H\Lambda^k(\Omega,\Gamma)
  @>\exterioderivative>>
  H\Lambda^{k+1}(\Omega,\Gamma)
  @>\exterioderivative>>
  \dots
 \end{CD}
\end{align}
The case $\Gamma = \emptyset$ corresponds to imposing no boundary conditions at all 
while the case $\Gamma = \partial\Omega$ corresponds to imposing boundary conditions along the whole of the boundary.
The space $H\Lambda^{k}(\Omega,\Gamma)$ is then more commonly written either $H\Lambda^{k}(\Omega)$ or $H_{0}\Lambda^{k}(\Omega)$, respectively. 
Both cases have been subject to extensive study in the literature of theoretical and numerical analysis, 
while results for partial boundary conditions are more recent.
For the case that $\Omega$ is a weakly Lipschitz domain and $\Gamma$ is a boundary part with sufficient regularity,
the images of the exterior derivatives of the de~Rham complex~\eqref{math:sobolevderhamcomplex} 
have closed range and they realize the Betti numbers of $\overline \Omega$ relative to $\Gamma$ on cohomology. 
We refer to~\cite{GMM} for the details.

The study of differential complexes such as \eqref{math:sobolevderhamcomplex}
provides the theoretical background of partial differential equations associated with the exterior derivative. 
The most widely known one is the Hodge-Laplace equation. 
The de~Rham complex with partial boundary conditions 
is the theoretical underpinning for the Hodge-Laplace equation with \emph{mixed boundary conditions} (see~\cite{licht2019mixed}). 
\end{remark}

\section{Finite Element Spaces over Triangulations} \label{sec:femspaces}

We now turn our attention to the theory of finite element differential forms.
We consider the classes of polynomial differential forms, and the corresponding finite element spaces, 
that have been elaborated upon by Hiptmair~\cite{hiptmair2002finite} and Arnold, Falk, and Winther~\cite{AFW1,afwgeodecomp},
\\

We let $\mathcal P_{r}\Lambda^{k}(\Omega)$ be the space of differential $k$-forms
whose coefficients are polynomials of degree at most $r \geq 0$ over the domain $\Omega$. 
For $r \geq 1$, we define $\mathcal P_{r}^{-}\Lambda^{k}(\Omega)$ by 
\begin{align} \label{math:trimmedpolynomialdifferentialforms}
 \mathcal P_{r}^{-}\Lambda^{k}(\Omega) := \mathcal P_{r-1}\Lambda^{k}(\Omega) + \kappa \mathcal P_{r-1}\Lambda^{k+1}(\Omega),
\end{align}
where $\kappa$ is the Koszul operator (see~\cite{AFW1}). 
These spaces are also defined over simplices:  
we let 
$\mathcal P_{r}\Lambda^{k}(S)$ and $\mathcal P_{r}^{-}\Lambda^{k}(S)$
be the pullbacks of the spaces 
$\mathcal P_{r}\Lambda^{k}(\mathbb R^{n})$ and $\mathcal P_{r}^{-}\Lambda^{k}(\mathbb R^{n})$
onto any simplex $S$, respectively. 

We also need to discuss spaces of polynomial differential forms over simplices with boundary conditions.
For any simplex $S$ one sets 
\begin{align} \label{math:spacesonsimpliceswithboundaryconditions}
 \mathring{\mathcal P}_{r}\Lambda^{k}(S) 
 &:=
 \left\{ 
    \omega \in \mathcal P_{r}\Lambda^{k}(S) 
    \suchthat 
    \forall F \in \Delta(S), F \neq S : \trace_{S,F} \omega = 0
 \right\},
 \\
 \mathring{\mathcal P}_{r}^{-}\Lambda^{k}(S) 
 &:=
 \left\{ 
    \omega \in \mathcal P_{r}^{-}\Lambda^{k}(S) 
    \suchthat* 
    \forall F \in \Delta(S), F \neq S : \trace_{S,F} \omega = 0
 \right\}.
\end{align}
We define finite element spaces over triangulations 
by considering piecewise polynomial differential forms satisfying the necessary continuity conditions 
so that the exterior derivative exists not just in the sense of distributions.
Formally, assume that $\mathcal T$ is a triangulation of the domain $\Omega$. 
We set 
\begin{align} \label{math:finiteelementspaces}
 \mathcal P_{r}\Lambda^{k}(\mathcal T)
 &:=
 \left\{ 
    \omega \in \mathcal W^{\infty,\infty}\Lambda^{k}(\Omega)
    \suchthat* 
    \forall T \in \Delta_{n}(\mathcal T) : \omega_{|T} \in \mathcal P_{r}\Lambda^{k}(T)
 \right\}
 ,
 \\
 \mathcal P_{r}^{-}\Lambda^{k}(\mathcal T)
 &:=
 \left\{ 
    \omega \in \mathcal W^{\infty,\infty}\Lambda^{k}(\Omega)
    \suchthat* 
    \forall T \in \Delta_{n}(\mathcal T) : \omega_{|T} \in \mathcal P_{r}^{-}\Lambda^{k}(T)
 \right\}
 .
\end{align}
The definition of finite element spaces with boundary conditions requires further concepts. 
For any simplicial complex $\mathcal U \subseteq \mathcal T$ we define formally 
\begin{gather} \label{math:finiteelementspaces:boundaryconditions}
 \mathcal P_{r}\Lambda^{k}(\mathcal T,\mathcal U)
 :=
 \left\{\; 
  u \in \mathcal P_{r}\Lambda^{k}(\mathcal T)
  \suchthat 
  \forall F \in \mathcal U : \trace_{F} u = 0
 \;\right\}
 ,
 \\
 \mathcal P_{r}^{-}\Lambda^{k}(\mathcal T,\mathcal U)
 :=
 \left\{\; 
  u \in \mathcal P_{r}^{-}\Lambda^{k}(\mathcal T)
  \suchthat* 
  \forall F \in \mathcal U : \trace_{F} u = 0
 \;\right\}
 .
\end{gather}
In the case where $\mathcal U = \emptyset$, we have $\mathcal P\Lambda^{k}(\mathcal T,\mathcal U) = \mathcal P\Lambda^{k}(\mathcal T)$.
Of course, the most interesting case is the setting where $\mathcal U$ triangulates a boundary of a domain 
along which we impose homogeneous partial boundary conditions.

We recapitulate some simple relations between these finite element spaces,
which are easily verifiable from the literature on finite element differential forms: 
\begin{gather*}
   \mathcal P_{r}^{}\Lambda^{k}(\mathcal T,\mathcal U)
   \subseteq 
   \mathcal P_{r+1}^{-}\Lambda^{k}(\mathcal T,\mathcal U)
   \subseteq 
   \mathcal P_{r+1}^{}\Lambda^{k}(\mathcal T,\mathcal U),
   \\
   \exterioderivative \mathcal P_{r+1}\Lambda^{k}(\mathcal T,\mathcal U) = \exterioderivative \mathcal P_{r+1}^{-}\Lambda^{k}(\mathcal T,\mathcal U) \subseteq \mathcal P_{r}\Lambda^{k+1}(\mathcal T,\mathcal U).
\end{gather*}
These hold for any $k,r \in \mathbb Z$ with $r \geq 0$. 

\begin{remark}
 We highlight a few further facts in relation to boundary conditions. 
 Suppose that $\Omega \subseteq \mathbb R^{n}$ is a domain and that $\Gamma \subseteq \partial \Omega$ is some part of its boundary with positive surface measure.
 For the purpose of illustration, let us assume that $\partial\Omega$ can locally be written as the graph of a function. 
 Suppose that the simplicial complex $\mathcal T$ is a triangulation of $\Omega$
 and that the subcomplex $\mathcal U$ is a triangulation of $\Gamma$. One finds 
 \begin{gather*}
  \mathcal P_{r}\Lambda^{k}(\mathcal T,\mathcal U)     = \mathcal P_{r}\Lambda^{k}(\mathcal T) \cap \mathcal{W}^{\infty,\infty}(\Omega,\Gamma)
  ,
  \\
  \mathcal P_{r}^{-}\Lambda^{k}(\mathcal T,\mathcal U) = \mathcal P_{r}^{-}\Lambda^{k}(\mathcal T) \cap \mathcal{W}^{\infty,\infty}(\Omega,\Gamma)
  .
 \end{gather*}
 The spaces $\mathcal P_{r}\Lambda^{k}(\mathcal T,\mathcal U)$ and $\mathcal P_{r}^{-}\Lambda^{k}(\mathcal T,\mathcal U)$ are, in that sense,
 finite element spaces appropriate for discretizing Sobolev spaces of differential forms with boundary conditions along $\mathcal U$. 
\end{remark}

We discuss the geometric decomposition of finite element spaces.
This theoretical framework may be more abstract than what is usually found in introductory finite element expositions 
but it has been very useful in capturing an essential feature of various finite element spaces,
namely association of shape functions and degrees of freedom to cells of the triangulation. 

We assume that for each $F \in \mathcal T$ we have the \emph{extension operators}
\begin{align} \label{math:extensionoperator}
 \Ext_{F,\mathcal T}^{r,k}   : \mathcal P_{r}\Lambda^{k}(F)     \rightarrow \mathcal P_{r}\Lambda^{k}(\mathcal T),
 \quad 
 \Ext_{F,\mathcal T}^{r,k,-} : \mathcal P_{r}^{-}\Lambda^{k}(F) \rightarrow \mathcal P_{r}\Lambda^{k}(\mathcal T).
\end{align}
which have been defined by Arnold, Falk and Winther~\cite{afwgeodecomp}. 
The two critical properties of these extension operators is that they are right-inverses of the traces, 
\begin{align*}
  \trace_{F} \Ext_{F,\mathcal T}^{r,k} = \Id, \quad \trace_{F} \Ext_{F,\mathcal T}^{r,k,-} = \Id,
\end{align*}
and that for all $S \in \mathcal T$ with $F \nsubseteq S$ we have  
\begin{align*}
  \trace_{S} \Ext_{F,\mathcal T}^{r,k} \mathring{\mathcal P}_{r}\Lambda^{k}(F) = 0, \quad \trace_{S} \Ext_{F,\mathcal T}^{r,k,-} \mathring{\mathcal P}_{r}^{-}\Lambda^{k}(F) = 0
  .
\end{align*}
It is then possible to decompose finite element spaces into direct sums 
\begin{align} \label{math:geodecomp}
    \mathcal P_{r}\Lambda^{k}(\mathcal T,\mathcal U)
    =
    \bigoplus_{ \substack{ F \in \mathcal T \\ F \notin \mathcal U } }
    \Ext_{F,\mathcal T}^{r,k} \mathring{\mathcal P}_{r}\Lambda^{k}(F)
    ,
    \quad
    \mathcal P_{r}^{-}\Lambda^{k}(\mathcal T,\mathcal U)
    =
    \bigoplus_{ \substack{ F \in \mathcal T \\ F \notin \mathcal U } }
    \Ext_{F,\mathcal T}^{r,k,-} \mathring{\mathcal P}_{r}^{-}\Lambda^{k}(F)
    . 
\end{align}
This decomposition is an instance of Theorem~4.3 in~\cite{afwgeodecomp}
applied to the finite element spaces $\mathcal P_{r}\Lambda^{k}(\mathcal T)$ and $\mathcal P_{r}^{-}\Lambda^{k}(\mathcal T)$,
see also Theorems~7.3 and 8.3 in the aforementioned publication,
in the case without boundary conditions. For the case with boundary conditions, see~\cite{licht2017priori}.

\begin{remark}
 Informally, \eqref{math:geodecomp} is a decomposition of the global finite element space 
 into localized ``bubble spaces'' associated with the degrees of freedom. 
 For example, if $k=0$, then we are dealing with the classical Lagrange elements. 
 The Lagrange space over $\mathcal T$ is spanned by localized bubble spaces 
 associated to simplices. This includes the standard hat function 
 associated to the vertices, and quadratic bubbles associated to edges,
 and the bubbles associated to full-dimensional simplices. 
 In case $k=n$, we are just dealing with piecewise discontinuous functions 
 whose degrees of freedom are all associated to full-dimensional cells. 
 Another important example is the case $k=n-1$, where we have divergence-conforming finite element spaces. 
 These can be decomposed into ``bubbles'' associated to either full-dimensional cells 
 or faces of codimension one.
\end{remark}

We finish this section with a discussion of the degrees of freedom for these finite element spaces. 
We consider following spaces of functionals. 
When $F \in \mathcal T$ and $m = \dim(F)$, then we define 
\begin{subequations}
\begin{align} \label{math:degreesoffreedoms}
 \mathcal C_r\Lambda^k(F)
 &:=
 \left\{
  \omega \mapsto \int_F \eta \wedge \trace_{F} \omega \in \mathcal W^{\infty,\infty}\Lambda^{k}(\Omega)^{\ast} 
  \suchthat* 
  \eta \in \mathcal P_{r+k-m}^{-}\Lambda^{m-k}(F)
 \right\},
 \\ 
 \mathcal C_r^{-}\Lambda^k(F)
 &:=
 \left\{
  \omega \mapsto \int_F \eta \wedge \trace_{F} \omega \in \mathcal W^{\infty,\infty}\Lambda^{k}(\Omega)^{\ast} 
  \suchthat* 
  \eta \in \mathcal P_{r+k-m-1}\Lambda^{m-k}(F)
 \right\}.
\end{align}
\end{subequations}
These spaces are algebraically isomorphic to $\mathcal P_{r+k-m}^{-}\Lambda^{m-k}(F)$ and $\mathcal P_{r+k-m-1}\Lambda^{m-k}(F)$, respectively,
see~\cite{AFW1}. 
We define those functionals over $\mathcal W^{\infty,\infty}\Lambda^{k}(\Omega)$ since those differential forms have well-defined traces
(see~\cite{gol1982differential}) but this is only of technical relevance. 
If we restrict the functionals in these sets to $\mathring{\mathcal P}_r\Lambda^k(F)$ and $\mathring{\mathcal P}_r^{-}\Lambda^k(F)$, respectively, 
in the obvious sense, then we obtain the full dual spaces of the local finite element spaces with boundary conditions. 
With little effort (see~\cite{licht2017priori}) it is possible to show that 
\begin{align} \label{math:geodecomp:degreesoffreedom}
    \mathcal P_{r}\Lambda^{k}(\mathcal T,\mathcal U)^{\ast}
    =
    \bigoplus_{ \substack{ F \in \mathcal T \\ F \notin \mathcal U } }
    \mathcal C_{r}\Lambda^{k}(F)
    ,
    \quad
    \mathcal P_{r}^{-}\Lambda^{k}(\mathcal T,\mathcal U)^{\ast}
    =
    \bigoplus_{ \substack{ F \in \mathcal T \\ F \notin \mathcal U } }
    \mathcal C_{r}^{-}\Lambda^{k}(F)
    . 
\end{align}

\begin{remark}
The finite element spaces discussed in this article can be put together to form finite element de~Rham complexes,
for example:  
\begin{align}
 \label{math:FEderhamcomplex}
 \begin{CD}
  \dots 
  @>\exterioderivative>>
  \mathcal P_{r}^{-}\Lambda^{k  }(\mathcal T,\mathcal U)
  @>\exterioderivative>>
  \mathcal P_{r}^{-}\Lambda^{k+1}(\mathcal T,\mathcal U)
  @>\exterioderivative>>
  \dots
 \end{CD}
\end{align}
One can construct projections 
$\pi^{k} : \mathcal W^{p,q}\Lambda^{k}(\Omega,\Gamma) \rightarrow \mathcal P_{r}^{-}\Lambda^{k}(\mathcal T,\mathcal U)$ 
from the Sobolev de~Rham complex onto the finite element de~Rham complex 
which commute with the exterior derivative and satisfy $L^{p}$ bounds 
depending only on the polynomial degree and the mesh quality.
\begin{align}
 \label{math:smoothedprojectioncommutingdiagramm}
 \begin{CD}
  \dots
  @>\exterioderivative>>
  \mathcal W^{p,q}\Lambda^{k  }(\Omega,\Gamma)
  @>\exterioderivative>>
  \mathcal W^{q,s}\Lambda^{k+1}(\Omega,\Gamma)
  @>\exterioderivative>>
  \dots
  \\
  @. @V{\pi^{k}}VV @V{\pi^{k+1}}VV @.
  \\
  \dots
  @>\exterioderivative>>
  \mathcal P_{r}^{-}\Lambda^{k  }(\mathcal T,\mathcal U)
  @>\exterioderivative>>
  \mathcal P_{r}^{-}\Lambda^{k+1}(\mathcal T,\mathcal U)
  @>\exterioderivative>>
  \dots
 \end{CD}
\end{align}
This \emph{smoothed projection} is the key to enable the abstract Galerkin theory of Hilbert complexes (see~\cite{AFW2}). 
The finite element solution of the Hodge-Laplace equation 
is a quasi-optimal approximation of the true solution within the finite element space. 
However, those results do not concretize the approximation estimates.
Concretely, we usually want to bound the error in terms of the (local) mesh size and the solution regularity.
The interpolant derived in this article accomplishes that goal.
\end{remark}

\section{Biorthogonal Bases and Degrees of Freedom} \label{sec:biorthogonal}

In this section we discuss biorthogonal systems of bases and degrees of freedom for finite element spaces. 
This will not only provide helpful tools in the discussion of the Cl\'ement interpolant in subsequent sections
but it is also an interesting result in its own right. 
As a particular feature, the bases and degrees of freedom are localized.
We inductively construct the biorthogonal system in a top-down manner: 
the induction starts with cells associated to the highest dimension
and progressively works itself down the simplex dimensions.

\begin{assumption}
For the remainder of this article 
we let $\mathcal T$ be an $n$-dimensional simplicial complex,
and we let $\mathcal U \subseteq \mathcal T$ be a simplicial subcomplex. 
We assume that $\Omega \subseteq \mathbb R^{n}$ is a domain triangulated by $\mathcal T$
and that $\Gamma \subseteq \partial\Omega$ is a part of the domain boundary 
triangulated by $\mathcal U$.
Moreover, we fix $p \in [1,\infty]$, $k \in \mathbb N_{0}$, a polynomial degree $r \in \mathbb N$, and a family of finite element spaces of differential forms. 
Thus we write 
\begin{align*}
 &
 \mathcal P\Lambda^{k}(\mathcal T) = \mathcal P_{r}\Lambda^{k}(\mathcal T), \quad 
 \mathcal P\Lambda^{k}(\mathcal T,\mathcal U) = \mathcal P_{r}\Lambda^{k}(\mathcal T,\mathcal U), 
 \\
 &\qquad
 \text{ and for all $S \in \mathcal T$}:
 \mathcal P\Lambda^{k}(S) = \mathcal P_{r}\Lambda^{k}(S), 
 \; 
 \mathring{\mathcal P}\Lambda^{k}(S) = \mathring{\mathcal P}_{r}\Lambda^{k}(S),
 \text{ and } 
 \mathcal C\Lambda^{k}(S) = \mathcal C_{r}\Lambda^{k}(S), 
 \\
 &\qquad
 \text{ and for all domains $U \subseteq \mathbb R^{n}$}:
 \mathcal P\Lambda^{k}(U) = \mathcal P_{r}\Lambda^{k}(U), 
\end{align*}
or 
\begin{align*}
 &
 \mathcal P\Lambda^{k}(\mathcal T) = \mathcal P_{r}^{-}\Lambda^{k}(\mathcal T), \quad 
 \mathcal P\Lambda^{k}(\mathcal T,\mathcal U) = \mathcal P_{r}^{-}\Lambda^{k}(\mathcal T,\mathcal U), 
 \\
 &\qquad
 \text{ and for all $S \in \mathcal T$}:
 \mathcal P\Lambda^{k}(S) = \mathcal P_{r}^{-}\Lambda^{k}(S), 
 \; 
 \mathring{\mathcal P}\Lambda^{k}(S) = \mathring{\mathcal P}_{r}^{-}\Lambda^{k}(S), 
 \text{ and } 
 \mathcal C\Lambda^{k}(S) = \mathcal C_{r}^{-}\Lambda^{k}(S), 
 \\
 &\qquad
 \text{ and for all domains $U \subseteq \mathbb R^{n}$}:
 \mathcal P\Lambda^{k}(U) = \mathcal P_{r}^{-}\Lambda^{k}(U). 
\end{align*}
We assume\footnote{Recall that $\mathcal P_{r}\Lambda^{0} = \mathcal P_{r}^{-}\Lambda^{0}$ and $\mathcal P_{r-1}\Lambda^{n} = \mathcal P_{r}^{-}\Lambda^{n}$.} that the first option holds if $k=0$ and that the second option holds if $k=n$.
Furthermore, for the reason of exposition, we introduce for every $S \in \mathcal T$ a set of indices 
\begin{align*}
 I(S) := \{ 1, \dots, \dim\mathring{\mathcal P}\Lambda^{k}(S)\}.
\end{align*}
\end{assumption}

We can now state the main result of this section.

\begin{theorem}[Localised Biorthogonal System] \label{prop:biorthogonal}
 There exist bases $\left\{ \phi_{S,i}^{\ast} \right\}_{i \in I(S)}$ of $\mathcal C\Lambda^{k}(S)$ for each $S$,
 and a basis $\left\{ \phi_{S,i} \right\}_{S \in \mathcal T, i \in I(S)}$ of $\mathcal P\Lambda^{k}(\mathcal T)$ such that 
 the following conditions are satisfied for all $S \in \mathcal T$:
 \begin{gather} 
    \forall S' \in \mathcal T, i \in I(S), j \in I(S') : 
    \phi_{S,i}^{\ast}( \phi_{S',j} )
    = 
    \begin{cases}
    1 & \text{ if } S = S', i = j,
    \\
    0 & \text{otherwise.}
    \end{cases} \label{prop:biorthogonal:biorthogonal}
    \\
    \forall S' \in \mathcal T : S \nsubseteq S' \implies \trace_{S'} \phi_{S,i} = 0 \label{prop:biorthogonal:locality}
 \end{gather}
 In addition to that, for all $S, T \in \mathcal T$ with $S \subseteq T$ and $\dim(T) = n$, 
 \begin{gather}
    \| \phi_{S,i} \|_{L^{p}\Lambda^{k}(T)} 
    \leq C_{\rm{A}} h_{S}^{ \frac{n}{p} - k  },  \label{prop:biorthogonal:boundphi}
    \\
    \| \phi^{\ast}_{S,i}( \omega ) \phi_{S,i} \|_{L^{p}\Lambda^{k}(T)} 
    \leq 
    C_{\rm{A}} 
    \| \omega \|_{L^{p}\Lambda^{k}(T)},
    \quad 
    \omega \in \mathcal P_{r}\Lambda^{k}(T). \label{prop:biorthogonal:bound}
 \end{gather}
 Here, $C_{\rm{A}} > 0$ is a constant which only depends on $p$, $n$, the polynomial degree $r$, and $\mu(\mathcal T)$.
\end{theorem}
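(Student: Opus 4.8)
The plan is to construct the biorthogonal system inductively by descending simplex dimension, using the geometric decomposition \eqref{math:geodecomp} and \eqref{math:geodecomp:degreesoffreedom} together with scaling arguments on a reference simplex.

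\medskip

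First I would set up the induction on simplex dimension, going from $n$ down to $0$. For each $S \in \mathcal T$ I want to produce the bases $\{\phi_{S,i}^\ast\}_{i\in I(S)}$ of $\mathcal C\Lambda^k(S)$ and the functions $\{\phi_{S,i}\}_{i\in I(S)}$ with $\phi_{S,i} \in \mathcal P\Lambda^k(\mathcal T)$. The locality requirement \eqref{prop:biorthogonal:locality} forces $\phi_{S,i}$ to lie in the ``bubble space'' $\Ext_{S,\mathcal T}\mathring{\mathcal P}\Lambda^k(S)$ of the decomposition \eqref{math:geodecomp} (taken with $\mathcal U = \emptyset$ so that all of $\mathcal P\Lambda^k(\mathcal T)$ is in play), since $\trace_S \phi_{S,i}$ must vanish on proper subsimplices of $S$ once we impose biorthogonality against the lower-dimensional degrees of freedom. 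Concretely: pick any basis $\{\psi_{S,i}\}_{i\in I(S)}$ of $\mathring{\mathcal P}\Lambda^k(S)$ and any dual basis $\{\psi_{S,i}^\ast\}$ of its dual, realized through momenta as in \eqref{math:degreesoffreedoms}; extend via $\Ext_{S,\mathcal T}$ to get candidate functions, and set $\phi_{S,i}^\ast$ to be the corresponding momentum functional. The extension property $\trace_S \Ext_{S,\mathcal T} = \Id$ gives $\phi_{S,i}^\ast(\phi_{S',j}) = 0$ whenever $S \not\subseteq S'$ or $\dim S' < \dim S$, and gives the Kronecker property when $S = S'$. The only thing left is to correct for the ``upper triangular'' coupling: when $S \subsetneq S'$, a bubble $\phi_{S',j}$ living on the larger simplex $S'$ generally does not have vanishing momenta against $\eta \in \mathcal P\Lambda^{m-k}(S)$ after tracing to $S$. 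Since we build top-down, at the moment we treat $S$ all the $\phi_{S',j}$ with $\dim S' > \dim S$ are already fixed, so I subtract from the naive candidate $\Ext_{S,\mathcal T}\psi_{S,i}$ a suitable linear combination $\sum_{S' \supsetneq S, j} \phi_{S',j}^\ast(\cdots)\,\phi_{S',j}$ to kill those couplings — this is a finite, locally supported correction. The functionals $\phi_{S,i}^\ast$ themselves need no correction; biorthogonality \eqref{prop:biorthogonal:biorthogonal} then follows by construction, and \eqref{prop:biorthogonal:locality} holds because every term in the corrected $\phi_{S,i}$ is supported on simplices containing $S$.

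\medskip

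Second, for the quantitative bounds \eqref{prop:biorthogonal:boundphi} and \eqref{prop:biorthogonal:bound}, I would use a standard Bramble-Hilbert / scaling argument. On a fixed reference simplex $\hat S$ of each dimension $m \le n$, the spaces $\mathring{\mathcal P}\Lambda^k(\hat S)$ and $\mathcal C\Lambda^k(\hat S)$ are finite-dimensional, so any fixed choice of biorthogonal pair there has bounded norms; the extension operators $\Ext_{\hat S, \hat{\mathcal T}}$ are likewise bounded on the reference patch. One then transports everything to a general $S \subseteq T$ via the affine pullback, tracking how the Jacobian factors act on $k$-forms. The pullback of a $k$-form scales its $L^p$ norm by roughly $h_S^{-k} \cdot (\text{volume})^{1/p} \sim h_S^{n/p - k}$ on an $n$-simplex $T$ of comparable size, which yields \eqref{prop:biorthogonal:boundphi}; a matching dual scaling for the momentum functional $\phi_{S,i}^\ast$ gives, after combining, the operator bound \eqref{prop:biorthogonal:bound} on $\mathcal P_r\Lambda^k(T)$. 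Shape-regularity enters in two ways: it bounds the number of $n$-simplices and subsimplices meeting $S$ (so the finite correction sum has boundedly many terms, each of comparable size), and it bounds the ratios $h_{T}/h_{S}$ and the distortion of the affine maps, so that the reference-patch estimates transfer with a constant depending only on $p, n, r, \mu(\mathcal T)$. Equivalence of norms on the finite-dimensional reference spaces does the rest.

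\medskip

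\textbf{The main obstacle} I expect is controlling the top-down correction terms uniformly: one must check that subtracting the contributions of the already-constructed higher-dimensional bubbles does not blow up the constant as we descend through all $n$ dimension levels, i.e.\ that the induction constant does not accumulate uncontrollably. This requires arranging the correction so that at each level the subtracted combination is bounded in terms of the previously established bounds and the (shape-regular-bounded) combinatorics of the local patch $U_{S,\mathcal T}$, and then observing that the number of dimension levels is fixed ($\le n+1$) so the product of per-level constants is still a constant of the allowed form. A secondary technical point is verifying that the momentum representation \eqref{math:degreesoffreedoms} of $\mathcal C\Lambda^k(S)$ is compatible with the $\Ext$-based decomposition \eqref{math:geodecomp:degreesoffreedom} in a way that makes the ``upper triangular'' structure precise — i.e.\ that $\phi_{S',j}^\ast$ annihilates $\Ext_{S,\mathcal T}\mathring{\mathcal P}\Lambda^k(S)$ for $\dim S' < \dim S$ — which follows from the trace-annihilation property of the extension operators but should be stated carefully.
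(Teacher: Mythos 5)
Your proposal follows essentially the same route as the paper's proof: a top-down induction over simplex dimension in which the naive extensions $\Ext_{S,\mathcal T}\phi_{S,i,0}$ are corrected by subtracting $\sum_{S'\supsetneq S}\sum_{j}\phi_{S',j}^{\ast}(\Ext_{S,\mathcal T}\phi_{S,i,0})\,\phi_{S',j}$, with the functionals left unchanged, locality and biorthogonality checked via the trace properties of the extension operators, and the bounds obtained by reference-simplex scaling with shape regularity controlling the patch combinatorics and the fixed number of dimension levels. Your two flagged obstacles (non-accumulation of constants and the upper-triangular annihilation property) are handled in the paper exactly as you anticipate, so the proposal is correct.
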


\begin{remark}
 The degrees of freedom stated in the theorem are just the same as in \eqref{math:degreesoffreedoms}.
 We will construct a new basis of the finite element space 
 from the geometrically decomposed basis via local modifications. 
 Equation~\eqref{prop:biorthogonal:biorthogonal} just states what we understand as biorthogonality,
 and equation~\eqref{prop:biorthogonal:locality} formalises that the basis forms are localized:
 any form associated to the simplex $S$ vanishes on simplices that do not contain $S$. 
 The estimates~\eqref{prop:biorthogonal:boundphi} and \eqref{prop:biorthogonal:bound} follow from scaling arguments. 
\end{remark}

\begin{proof}[Proof of Theorem~\ref{prop:biorthogonal}]
    First, for every simplex $S \in \mathcal T$ we fix a basis $\left\{ \phi_{S,i,0} \right\}_{i \in I(S)}$
    of the local space $\mathring{\mathcal P}\Lambda^{k}(S)$ and a basis $\left\{ \phi_{S,i}^{\ast} \right\}_{i \in I(S)}$
    of the space $\mathcal C\Lambda^{k}(S)$ such that 
    \begin{align*}
    \phi_{S,i}^{\ast}( \phi_{S,j,0} ) = \delta_{ij}, \quad i,j \in I(S).
    \end{align*}
    We can also assume that the differential forms $\phi_{S,i,0}$ and the functionals $\phi_{S,i}^{\ast}$
    are defined via pullback from a reference simplex. 
    Going from there, we inductively build a basis of $\mathcal P\Lambda^{k}(\mathcal T)$ in a top-down fashion. 

    Let $S \in \mathcal T$ be a simplex of dimension $n$.
    We define $\phi_{S,i} \in \mathcal P\Lambda^{k}(\mathcal T)$ by setting $\phi_{S,i|S} := \phi_{S,i,0}$ over $S$ and $\phi_{S,i|T} := 0$ over all other $n$-dimensional simplices $T \in \mathcal T$.
    It then follows that \eqref{prop:biorthogonal:locality} and \eqref{prop:biorthogonal:biorthogonal} hold 
    for all $S \in \Delta_{n}(\mathcal T)$. 
    Since we assume that $\phi_{S,i}$ and $\phi_{S,i}^{\ast}$ are defined via pullback from reference simplices,
    the two inequalities \eqref{prop:biorthogonal:boundphi} and \eqref{prop:biorthogonal:bound} are valid. 
 
    Next, suppose we have defined $\phi_{S,i} \in \mathcal P\Lambda^{k}(\mathcal T)$ 
    for all $S \in \mathcal T$ with $\dim(S) > m$ and $i \in I(S)$
    such that \eqref{prop:biorthogonal:locality} and \eqref{prop:biorthogonal:biorthogonal} hold 
    for all $S \in \mathcal T$ with $\dim(S) > m$.
    For every $S \in \mathcal T$ with $\dim(S) = m$ we then set 
    \begin{align*}
        \phi_{S,i} 
        := 
        \Ext_{S,\mathcal T} \phi_{S,i,0} 
        - 
        \sum_{ \substack{ T \in \mathcal T \\ S \subsetneq T } } 
        \sum_{ l \in I(T) }
        \phi_{T,l}^{\ast}( \Ext_{S,\mathcal T} \phi_{S,i,0} ) \phi_{T,l}
        ,
    \end{align*}
    where $\Ext_{S,\mathcal T} = \Ext^{r,k}_{S,\mathcal T}$ or $\Ext_{S,\mathcal T} = \Ext^{r,k,-}_{S,\mathcal T}$ as defined in Section~\ref{sec:femspaces},
    depending on our choice of finite element space. 
    
    To check that \eqref{prop:biorthogonal:locality} holds, we let $S' \in \mathcal T$ with $S \nsubseteq S'$.
    Then $T \nsubseteq S'$ for all $T \in \mathcal T$ with $S \subseteq T$.
    Therefore the properties of the extension operators and our induction assumptions lead to  
    \begin{align*}
        \trace_{S'} \phi_{S,i} 
        &= 
        \trace_{S'} \Ext_{S,\mathcal T} \phi_{S,i,0} 
        - 
        \sum_{ \substack{ T \in \mathcal T \\ S \subsetneq T } } 
        \sum_{ l \in I(T) }
        \phi_{T,l}^{\ast}( \Ext_{S,\mathcal T} \phi_{S,i,0} ) \trace_{S'} \phi_{T,l}
        =
        0.
    \end{align*}
    Next we prove \eqref{prop:biorthogonal:biorthogonal}. 
    We see that for all $i,j \in I(S)$
    \begin{align*}
        \phi_{S,j}^{\ast}\left( \phi_{S,i} \right)
        =
        \phi_{S,j}^{\ast}\left( \Ext_{S,\mathcal T} \phi_{S,i,0}  \right)
        - 
        \sum_{ \substack{ T \in \mathcal T \\ S \subsetneq T \\ l \in I(T) } } 
        \phi_{T,l}^{\ast}( \Ext_{S,\mathcal T} \phi_{S,i,0} ) \phi_{S,j}^{\ast}\left( \phi_{T,l} \right)
        &=
        \phi_{S,j}^{\ast}\left( \Ext_{S,\mathcal T} \phi_{S,i,0} \right) 
        \\&=
        \phi_{S,j}^{\ast}\left( \phi_{S,i,0} \right) 
        =
        \delta_{ij}
        .
    \end{align*}
    Let $i \in I(S)$. 
    If $S' \in \mathcal T$ with $S \neq S'$ and $S \nsubseteq S'$,
    then we already know that $\trace_{S'} \phi_{S,i} = 0$, thus $\phi_{S',j}^{\ast}( \phi_{S,i}) = 0$
    for all $j \in I(S')$. 
    If instead $S \subseteq S'$ 
    then for all $j \in I(S')$ one sees 
    \begin{align*}
     \phi_{S',j}^{\ast}\left( \phi_{S,i} \right)
     &=
     \phi_{S',j}^{\ast}\left( \Ext_{S,\mathcal T} \phi_{S,i,0}  \right)
     - 
     \sum_{ \substack{ T \in \mathcal T \\ S \subsetneq T } } 
     \sum_{ l \in I(T) }
     \phi_{T,l}^{\ast}( \Ext_{S,\mathcal T} \phi_{S,i,0} ) \phi_{S',j}^{\ast}\left( \phi_{T,l} \right)
     \\&=
     \phi_{S',j}^{\ast}\left( \Ext_{S,\mathcal T} \phi_{S,i,0}  \right)
     - 
     \phi_{S',j}^{\ast}( \Ext_{S,\mathcal T} \phi_{S,i,0} ) \phi_{S',j}^{\ast}\left( \phi_{S',j} \right)
     =
     0.
    \end{align*}
    Lastly, we attend to the inequalities \eqref{prop:biorthogonal:bound} and \eqref{prop:biorthogonal:boundphi}. 
    In what follows, 
    we write $C$ for a generic positive constant which depends on the same quantities as $C_{\mathrm{A}}$ in the statement of the theorem
    and which may change from line to line.
    By the induction assumption, 
    they are true for simplices $T \in \mathcal T$ with $\dim(T) > \dim(S)$. 
    For any simplex $D \in \mathcal T$ of dimension $n$ with $S \subseteq D$, 
    \begin{align*}
        \| \phi_{S,i} \|_{L^{p}\Lambda^{k}(D)}
        &\leq  
        \| \Ext_{S,\mathcal T} \phi_{S,i,0}  \|_{L^{p}\Lambda^{k}(D)}
        + 
        \sum_{ \substack{ T \in \mathcal T \\ S \subsetneq T } } 
        \sum_{ l \in I(T) }
        \| \phi_{T,l}^{\ast}( \Ext_{S,\mathcal T} \phi_{S,i,0} ) \phi_{T,l} \|_{L^{p}\Lambda^{k}(D)}
        \\&\leq  
        \| \Ext_{S,\mathcal T} \phi_{S,i,0}  \|_{L^{p}\Lambda^{k}(D)}
        + 
        C
        \sum_{ \substack{ T \in \mathcal T \\ S \subsetneq T } } 
        \sum_{ l \in I(T) }
        \| \Ext_{S,\mathcal T} \phi_{S,i,0} \|_{L^{p}\Lambda^{k}(D)}
        .
    \end{align*}
    Our choice of extension operators $\Ext_{S,\mathcal T}$ can be defined equivalently via transformation from a reference simplex,
    and so a scaling argument gives 
    \begin{align*}
     \| \Ext_{S,\mathcal T} \phi_{S,i,0} \|_{L^{p}\Lambda^{k}(D)} \leq C h_{S}^{ \frac{n}{p} - k }
     .
    \end{align*}
    This shows \eqref{prop:biorthogonal:boundphi}. 
    Finally, \eqref{prop:biorthogonal:bound} follows from 
    \begin{align*}
     \| \phi^{\ast}_{S,i}( \omega ) \phi_{S,i} \|_{L^{p}\Lambda^{k}(T)}
     &\leq 
     \left| \phi^{\ast}_{S,i}( \omega ) \right| 
     \| \phi_{S,i} \|_{L^{p}\Lambda^{k}(T)} 
     \\&\leq 
     C h_{S}^{n-1 - (n-1-k)}
     \left\| \omega \right\|_{L^{\infty}\Lambda^{k}(T)}  
     C_{\mathrm{A}} h_{S}^{ \frac{n}{p} - k }
     \\&\leq 
     C h_{S}^{k}
     \left\| \omega \right\|_{L^{\infty}\Lambda^{k}(T)}  
     C_{\mathrm{A}} h_{S}^{ \frac{n}{p} - k }
     \leq 
     C
     h_{S}^{k}
     h_{S}^{ -\frac{n}{p} }
     h_{S}^{ \frac{n}{p} - k }
     \left\| \omega \right\|_{L^{p}\Lambda^{k}(T)}  
    \end{align*}
    where we use another scaling argument and an inverse inequality.
    This completes the induction step,
    and the theorem follows.
\end{proof}

It is easy to extend the preceding theorem to the case of finite element spaces with boundary conditions. 
We simply use only those shape forms which are not associated with simplices of the respective boundary part.

\begin{theorem} \label{prop:biorthogonal:boundary}
    Let $\{ \phi_{S,i} \}_{ S \in \mathcal T, i \in I(S) }$ be the basis of $\mathcal P\Lambda^{k}(\mathcal T)$
    as described in Theorem~\ref{prop:biorthogonal}. 
    Then the set $\{ \phi_{S,i} \}_{ S \in \mathcal T \setminus \mathcal U, i \in I(S) }$ is a basis of $\mathcal P\Lambda^{k}(\mathcal T,\mathcal U)$.
\end{theorem}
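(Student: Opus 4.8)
The plan is to leverage the geometric decompositions \eqref{math:geodecomp} together with the locality property \eqref{prop:biorthogonal:locality} of the constructed basis. First I would recall that the basis $\{\phi_{S,i}\}$ from Theorem~\ref{prop:biorthogonal} is built, simplex by simplex in a top-down fashion, as a modification of the geometrically decomposed basis coming from the extension operators $\Ext_{S,\mathcal T}$. In particular, each $\phi_{S,i}$ lies in the summand $\Ext_{S,\mathcal T}\mathring{\mathcal P}\Lambda^{k}(S)$ modulo contributions associated with strictly larger simplices $T \supsetneq S$; hence $\{\phi_{S,i}\}_{S \in \mathcal T, i \in I(S)}$ and the geometrically decomposed basis $\{\Ext_{S,\mathcal T}\phi_{S,i,0}\}_{S \in \mathcal T, i \in I(S)}$ are related by a triangular (with respect to the partial order given by simplex inclusion) change of basis with unit diagonal. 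Such a change of basis is invertible and, crucially, maps the span of $\{\Ext_{S,\mathcal T}\phi_{S,i,0} : S \notin \mathcal U\}$ onto the span of $\{\phi_{S,i} : S \notin \mathcal U\}$, because the modification of $\phi_{S,i}$ only subtracts terms $\phi_{T,l}$ with $S \subsetneq T$, and $S \notin \mathcal U$ forces $T \notin \mathcal U$ (as $\mathcal U$ is a subcomplex, so $T \in \mathcal U$ would imply $S \in \mathcal U$).

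Next I would identify the span of $\{\Ext_{S,\mathcal T}\phi_{S,i,0} : S \notin \mathcal U, i \in I(S)\}$ with $\mathcal P\Lambda^{k}(\mathcal T,\mathcal U)$ directly from \eqref{math:geodecomp}: that equation states precisely
\begin{align*}
 \mathcal P\Lambda^{k}(\mathcal T,\mathcal U)
 =
 \bigoplus_{\substack{F \in \mathcal T \\ F \notin \mathcal U}}
 \Ext_{F,\mathcal T}\mathring{\mathcal P}\Lambda^{k}(F),
\end{align*}
and $\{\phi_{S,i,0}\}_{i \in I(S)}$ is by construction a basis of $\mathring{\mathcal P}\Lambda^{k}(S)$. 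Combining this with the previous paragraph, $\{\phi_{S,i}\}_{S \notin \mathcal U, i \in I(S)}$ spans $\mathcal P\Lambda^{k}(\mathcal T,\mathcal U)$. Since it is a subset of the basis $\{\phi_{S,i}\}_{S \in \mathcal T, i \in I(S)}$ of $\mathcal P\Lambda^{k}(\mathcal T)$, it is linearly independent, so it is indeed a basis.

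Alternatively—and this may be the cleaner route to write—I would argue directly from \eqref{prop:biorthogonal:locality} and the biorthogonality \eqref{prop:biorthogonal:biorthogonal}. For the inclusion $\operatorname{span}\{\phi_{S,i} : S \notin \mathcal U\} \subseteq \mathcal P\Lambda^{k}(\mathcal T,\mathcal U)$: any $F \in \mathcal U$ satisfies $S \nsubseteq F$ whenever $S \notin \mathcal U$ (again because $\mathcal U$ is a subcomplex), so \eqref{prop:biorthogonal:locality} gives $\trace_{F}\phi_{S,i} = 0$, hence every such $\phi_{S,i}$ satisfies the boundary conditions defining $\mathcal P\Lambda^{k}(\mathcal T,\mathcal U)$. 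For the reverse inclusion: given $u \in \mathcal P\Lambda^{k}(\mathcal T,\mathcal U) \subseteq \mathcal P\Lambda^{k}(\mathcal T)$, expand $u = \sum_{S,i}\phi_{S,i}^{\ast}(u)\,\phi_{S,i}$ using biorthogonality, and show the coefficients $\phi_{S,i}^{\ast}(u)$ vanish for $S \in \mathcal U$; since $\phi_{S,i}^{\ast}(u) = \int_{S}\eta \wedge \trace_{S}u$ for some $\eta$ and $\trace_{S}u = 0$ for $S \in \mathcal U$ by definition of the space, this is immediate. Linear independence is inherited from the larger basis, completing the argument. The main (and only real) obstacle is the bookkeeping point that $S \notin \mathcal U$ implies every $T \supseteq S$ satisfies $T \notin \mathcal U$ and every $F \in \mathcal U$ satisfies $S \nsubseteq F$; both follow at once from $\mathcal U$ being closed under subsimplices, so there is no genuine difficulty here—the statement is essentially a corollary of the construction.
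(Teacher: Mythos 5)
Your proposal is correct, and both of your routes work; they differ from the paper's proof mainly in mechanism. The paper argues like the second half of your second route: it expands $u \in \mathcal P\Lambda^{k}(\mathcal T,\mathcal U)$ in the full basis and kills the coefficients attached to $S \in \mathcal U$, but it does so by an induction over the simplex dimension, using the locality property \eqref{prop:biorthogonal:locality} to see that $\trace_{S} u$ only involves the coefficients of simplices contained in $S$. Your version is more direct: biorthogonality \eqref{prop:biorthogonal:biorthogonal} identifies the coefficients as $\phi^{\ast}_{S,i}(u)$, and since each $\phi^{\ast}_{S,i}$ is by \eqref{math:degreesoffreedoms} an integral against $\trace_{S} u$, which vanishes for $S \in \mathcal U$ by the definition of $\mathcal P\Lambda^{k}(\mathcal T,\mathcal U)$, no induction is needed. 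Your first route, via the geometric decomposition \eqref{math:geodecomp} and the triangular (unit-diagonal, with respect to simplex inclusion) change of basis between $\{\Ext_{S,\mathcal T}\phi_{S,i,0}\}$ and $\{\phi_{S,i}\}$, is genuinely different from the paper; its phrasing about the change of basis ``mapping span onto span'' is a bit loose, but the honest version (each $\phi_{S,i}$ with $S \notin \mathcal U$ is a combination of extensions from simplices $T \supseteq S \notin \mathcal U$, and conversely each $\Ext_{S,\mathcal T}\phi_{S,i,0}$ with $S \notin \mathcal U$ equals $\phi_{S,i}$ plus multiples of $\phi_{T,l}$ with $T \supsetneq S$, hence $T \notin \mathcal U$) gives both span inclusions immediately. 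You also explicitly verify the forward inclusion $\phi_{S,i} \in \mathcal P\Lambda^{k}(\mathcal T,\mathcal U)$ for $S \notin \mathcal U$ via locality and the subcomplex property of $\mathcal U$; the paper leaves this direction implicit, so your write-up is, if anything, slightly more complete.
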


\begin{proof}
    Let $\omega \in \mathcal P\Lambda^{k}(\mathcal T,\mathcal U)$.
    There exist unique $\chi_{S,i} \in \mathbb R$ for $S \in \mathcal T$ and $i \in I(S)$ with 
    \begin{align*}
     \omega = \sum_{ S \in \mathcal T } \sum_{ i \in I(S) } \chi_{S,i} \phi_{S,i}. 
    \end{align*}
    It remains to show that $\chi_{S,i} = 0$ for any $S \in \mathcal U$. We use an induction argument. 
    First, if $S \in \mathcal U$ with $\dim(S) = 0$, then $\trace_{S} \omega = \sum_{ i \in I(S) } \chi_{S,i} \trace_{S} \phi_{S,i}$.
    Hence $\trace_{S} \omega = 0$ shows that $\chi_{S,i} = 0$ for all $i \in I(S)$. 
    Next, suppose that for some $m > 0$ we already know that $\chi_{S,i} = 0$ for $S \in \mathcal T$ and $i \in I(S)$ with $\dim(S) < m$. 
    If $S \in \mathcal U$ with $\dim(S) = m$, then property \eqref{prop:biorthogonal:locality} yields 
    \begin{align*}
     0 = \trace_{S} \omega 
     &= 
     \sum_{ \substack{ F \in \mathcal T \\ F \subseteq S } } \sum_{ i \in I(F) } \chi_{F,i} \trace_{S} \phi_{F,i}
     =
     \sum_{ i \in I(S) } \chi_{S,i} \trace_{S} \phi_{S,i}
     .
    \end{align*}
    It follows again $\chi_{S,i} = 0$ for all $i \in I(S)$. 
    An induction argument completes the proof. 
\end{proof}

\section{Cl\'ement Interpolation and Local Approximation Theory} \label{sec:clement}

In this section we generalize the Cl\'ement interpolant without boundary conditions to the setting of differential forms. 
Thus we construct a bounded operator from $L^{p}$ spaces of differential forms onto finite element spaces. 
Our construction follows the main ideas of what is known as the Cl\'ement interpolant in the scalar-valued finite element setting. 
In his original work, Cl\'ement defined the interpolant first taking projections onto local neighborhoods
of the degrees of freedom and then evaluating each degree of freedom at the associated projection.
The resulting operator is bounded with respect to Lebesgue norms, it is local, 
and allows for best approximations in the local neighborhood around each cell.
\\

First, we fix projections onto polynomial differential forms over simplices and neighborhoods of simplices.
For each each simplex $S \in \mathcal T$ we have an idempotent bounded linear mapping 
\begin{align*}
 P_{S} : 
 L^{p}\Lambda^{k}(\Omega)
 \rightarrow 
 \mathcal P_{r}\Lambda^{k}({U}_{S,\mathcal T}) 
 \subset 
 L^{p}\Lambda^{k}(\Omega)
\end{align*}
such that for all $\omega \in W^{m,p}\Lambda^{k}(\Omega)$ with $m \in [0,r+1]$ one has 
\begin{gather}
 \| \omega - P_{S} \omega \|_{L^{p}\Lambda^{k}({U}_{S,\mathcal T})} 
 \leq 
 C_{\rm{BH}} 
 h_{S}^{m}
 | \omega |_{W^{m,p}\Lambda^{k}({U}_{S,\mathcal T})},
 \label{math:bramblehilbert}
\end{gather}
and whenever $\exterioderivative\omega \in W^{l,p}\Lambda^{k}(\Omega)$ with $l \in [0,r]$, 
\begin{gather}
 \| \exterioderivative\omega - \exterioderivative P_{S} \omega \|_{L^{p}\Lambda^{k+1}({U}_{S,\mathcal T})} 
 \leq 
 C_{\rm{BH}} 
 h_{S}^{l}
 | \exterioderivative \omega |_{W^{l,p}\Lambda^{k+1}({U}_{S,\mathcal T})}.
 \label{math:bramblehilbert:exteriorderivative}
\end{gather}
Here, $C_{\rm{BH}} > 0$ depends only on $n$, $p$, the polynomial degree $r$, and the triangulation regularity.
One possible choice for $P_{S}$ is the interpolant introduced by Dupont and Scott~\cite{dupont1980polynomial},
which commutes with partial derivatives. 
While they discuss that mapping only for scalar functions, 
it can easily be extended to differential forms by componentwise application. 

We introduce another family of projections. For each $n$-dimensional simplex $T \in \mathcal T$
there exists a bounded projection 
\begin{align*}
 \Pi_{T} : L^{p}\Lambda^{k}(T) \rightarrow \mathcal P\Lambda^{k}(T)
\end{align*}
which satisfies the inequalities 
\begin{gather}
 \| \omega - \Pi_{T} \omega \|_{L^{p}\Lambda^{k}(T)} 
 \leq 
 C_{\Pi} 
 \inf_{ \psi \in \mathcal P\Lambda^{k}(T) }
 \| \omega - \psi \|_{L^{p}\Lambda^{k}(T)}, 
 \quad 
 \omega \in L^{p}\Lambda^{k}(T),
 \label{math:bestapproximation}
 \\
 \| \exterioderivative\omega - \exterioderivative \Pi_{T} \omega \|_{L^{p}\Lambda^{k+1}(T)} 
 \leq 
 C_{\Pi} 
 \inf_{ \psi \in \mathcal P\Lambda^{k}(T) }
 \| \exterioderivative\omega - \exterioderivative\psi \|_{L^{p}\Lambda^{k+1}(T)}, 
 \quad 
 \omega \in \mathcal W^{p,p}\Lambda^{k}(T). \label{math:bestapproximation:exteriorderivative}
\end{gather}
Here, $C_{\Pi} > 0$ depends only on $n$, $p$, the polynomial degree $r$, and the triangulation regularity.
To see this, we first define the projection on a reference simplex and then transport them to other simplices via pullback. 
On a reference simplex, we simply pick the well-known smoothed projection without boundary conditions 
(see~\cite{AFW1,christiansen2008smoothed,licht2019smoothed,licht2019mixed}). 
These operators are uniformly bounded, commute with the exterior derivative, 
and satisfy \eqref{math:bestapproximation} and \eqref{math:bestapproximation:exteriorderivative} over a reference simplex. 
The desired properties then follow. 
\\

We define our interpolant by 
\begin{align} \label{math:clementinterpolante}
 \Interpolant_{\mathcal T}: L^{p}\Lambda^{k}(\Omega) \rightarrow \mathcal P\Lambda^{k}(\mathcal T),
 \quad 
 \omega \mapsto 
 \sum_{S \in \mathcal T} \sum_{i \in I(S)} 
 \phi^\ast_{S,i} \left( P_{S} \omega \right) \phi_{S,i}.
\end{align}
This generalizes the Cl\'ement interpolant to the setting of finite element exterior calculus. 
Next we analyze the interpolation error.

\begin{theorem} \label{prop:clementerrorestimate}
 There exists $C_{\mathcal{I}} > 0$, depending only on $n$, $p$, the polynomial degree $r$, and the shape measure of the triangulation,
 such that the following is true:
 for all $T \in \Delta_{n}(\mathcal T)$ we have 
 \begin{align*}
    \| \Interpolant_{\mathcal T}\omega \|_{L^{p}\Lambda^{k}(T)}
    &\leq 
    C_{\mathcal{I}}
    \| \omega \|_{L^{p}\Lambda^{k}({U}_{T,\mathcal T}^{\ast})},
    \quad 
    \omega \in L^{p}\Lambda^{k}({U}_{T,\mathcal T}^{\ast}),
 \end{align*}
 and for all $T \in \Delta_{n}(\mathcal T)$ we have 
 \begin{align*}
    \| \omega - \Interpolant_{\mathcal T}\omega \|_{L^{p}\Lambda^{k}(T)}
    \leq 
    C_{\mathcal{I}}
    \| \omega - \Pi_{T} \omega \|_{L^{p}\Lambda^{k}(T)}
    +
    C_{\mathcal{I}}
    \sum_{ \substack{ S \subseteq T \\ i \in I(S) } } 
    \| \omega - P_{S} \omega \|_{L^{p}\Lambda^{k}({U}_{S,\mathcal T})},
    \quad 
    \omega \in L^{p}\Lambda^{k}(\Omega).
 \end{align*}
\end{theorem}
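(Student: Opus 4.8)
The plan is to analyze the two estimates separately, in both cases exploiting the biorthogonality relation \eqref{prop:biorthogonal:biorthogonal}, the locality \eqref{prop:biorthogonal:locality}, and the scaling bounds \eqref{prop:biorthogonal:boundphi}--\eqref{prop:biorthogonal:bound} from Theorem~\ref{prop:biorthogonal}, together with the Bramble--Hilbert-type properties of the local projections $P_S$ and $\Pi_T$. The first observation is that, for a fixed $n$-simplex $T$, only finitely many terms of the sum defining $\Interpolant_{\mathcal T}\omega$ contribute on $T$: by \eqref{prop:biorthogonal:locality}, $\phi_{S,i|T} = 0$ unless $S \subseteq T$, so
\begin{align*}
 \Interpolant_{\mathcal T}\omega_{|T} = \sum_{ \substack{ S \subseteq T \\ i \in I(S) } } \phi^{\ast}_{S,i}(P_S\omega)\,\phi_{S,i}.
\end{align*}
The number of such pairs $(S,i)$ is bounded in terms of $n$ and $r$ alone.

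For the stability estimate, I would bound each summand using \eqref{prop:biorthogonal:boundphi} together with the fact that $\phi^{\ast}_{S,i}$ is a momentum functional: $|\phi^{\ast}_{S,i}(P_S\omega)| \leq C h_S^{k-n/p}\|P_S\omega\|_{L^p\Lambda^k(U_{S,\mathcal T})}$ (this is the same scaling computation carried out at the end of the proof of Theorem~\ref{prop:biorthogonal}, using that $\eta$ in \eqref{math:degreesoffreedoms} lives over a simplex of dimension $\dim S$, plus the $L^p$-boundedness of $P_S$). Multiplying by $\|\phi_{S,i}\|_{L^p\Lambda^k(T)} \leq C h_S^{n/p-k}$, the powers of $h_S$ cancel and I get $\|\phi^{\ast}_{S,i}(P_S\omega)\phi_{S,i}\|_{L^p\Lambda^k(T)} \leq C\|\omega\|_{L^p\Lambda^k(U_{S,\mathcal T})} \leq C\|\omega\|_{L^p\Lambda^k(U_{T,\mathcal T}^{\ast})}$, using $U_{S,\mathcal T} \subseteq U_{T,\mathcal T}^{\ast}$ for $S \subseteq T$. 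Summing the $O(1)$ terms gives the first inequality; the shape measure enters only through the constants in the scaling estimates and through the bounded number of terms.

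For the error estimate, the standard Cl\'ement trick is to subtract a polynomial that is reproduced exactly. Since $\Pi_T\omega \in \mathcal P\Lambda^k(T)$ and the restriction of $\Interpolant_{\mathcal T}$ is built from the degrees of freedom that are dual to the basis of $\mathcal P\Lambda^k(\mathcal T)$, I would first check that $\Interpolant_{\mathcal T}$ reproduces forms in $\mathcal P\Lambda^k(T)$ on $T$ in the appropriate sense — more precisely, that for $\psi \in \mathcal P\Lambda^k(T)$ one has $\psi_{|T} = \sum_{S \subseteq T, i} \phi^{\ast}_{S,i}(\psi)\phi_{S,i}$ on $T$, which follows from \eqref{prop:biorthogonal:biorthogonal} and \eqref{prop:biorthogonal:locality} together with the fact that $\{\phi_{S,i}\}_{S \subseteq T, i}$ restricted to $T$ spans $\mathcal P\Lambda^k(T)$ and the $\phi^{\ast}_{S,i}$ are the dual basis (here I also use that $P_S$ is idempotent, so $P_S\psi = \psi$ when $\psi$ is polynomial on $U_{S,\mathcal T}$, though one must be slightly careful: $\psi$ is only polynomial on $T$, not on the whole patch). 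I would therefore write, with $\psi = \Pi_T\omega$,
\begin{align*}
 \omega - \Interpolant_{\mathcal T}\omega
 = (\omega - \psi) + (\Interpolant_{\mathcal T}\psi - \Interpolant_{\mathcal T}\omega)
 = (\omega - \psi) - \sum_{ \substack{ S \subseteq T \\ i \in I(S) } } \phi^{\ast}_{S,i}\big(P_S(\omega - \psi)\big)\,\phi_{S,i}
\end{align*}
on $T$, using reproduction of $\psi$ on $T$. Taking $L^p\Lambda^k(T)$ norms, the first term is $\|\omega - \Pi_T\omega\|_{L^p\Lambda^k(T)}$, and each term of the sum is bounded as in the stability argument but applied to $\omega - \psi$, giving $C\|\omega - \psi\|_{L^p\Lambda^k(U_{S,\mathcal T})}$. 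Now I split $\omega - \psi = (\omega - P_S\omega) + (P_S\omega - \psi)$: the first piece is exactly the term appearing in the claimed bound, and for the second I use that $P_S\omega - \psi \in \mathcal P_r\Lambda^k(U_{S,\mathcal T})$ is polynomial, apply an inverse inequality to pass from $L^p(U_{S,\mathcal T})$ to $L^p(T)$ (losing only a shape-measure constant), observe $P_S\omega - \psi = (P_S\omega - \omega) + (\omega - \psi)$ on $T$, and bound $\|\omega - \psi\|_{L^p\Lambda^k(T)} = \|\omega - \Pi_T\omega\|_{L^p\Lambda^k(T)}$ and $\|P_S\omega - \omega\|_{L^p\Lambda^k(T)} \leq \|P_S\omega-\omega\|_{L^p\Lambda^k(U_{S,\mathcal T})}$. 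Collecting terms yields the stated estimate.

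The main obstacle I anticipate is the reproduction step: $\Pi_T\omega$ is polynomial on $T$ but generally not on the enlarged patch $U_{S,\mathcal T}$, so $P_S(\Pi_T\omega) \neq \Pi_T\omega$ in general and the naive claim ``$\Interpolant_{\mathcal T}$ reproduces polynomials'' fails patch-wise. The fix is to only use that the basis forms restricted to $T$ reproduce $\mathcal P\Lambda^k(T)$ via their degrees of freedom, and to absorb the mismatch $P_S\psi - \psi$ into the error terms via the inverse-inequality argument above; getting the bookkeeping of which norm lives on which simplex right — and making sure every inverse inequality is applied only to genuinely polynomial forms — is where the care is needed. Everything else is routine scaling and the triangle inequality, with all constants depending only on $n$, $p$, $r$, and $\mu(\mathcal T)$.
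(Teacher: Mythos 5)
Your proposal is correct and follows essentially the same route as the paper: locality and the scaling/biorthogonality bounds of Theorem~\ref{prop:biorthogonal} give the stability estimate, and the error estimate comes from inserting $\Pi_{T}\omega$ and using that the restricted basis $\{\phi_{S,i|T}\}_{S\subseteq T}$ reproduces $\mathcal P\Lambda^{k}(T)$ through the dual functionals. The only difference is that the paper bounds each term $\phi^{\ast}_{S,i}\left(\Pi_{T}\omega-(P_{S}\omega)_{|T}\right)\phi_{S,i|T}$ directly on $T$ via \eqref{prop:biorthogonal:bound} (both arguments being polynomial on $T$) and only then applies the triangle inequality, so your detour through the patch norm of $\omega-\Pi_{T}\omega$ --- the polynomial extension of $\Pi_{T}\omega$ and the patch-to-element inverse inequality --- is unnecessary, though not incorrect, since $\mathcal P\Lambda^{k}(T)$ consists of restrictions of globally polynomial forms.
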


\begin{proof}
    Let $\omega \in L^{p}\Lambda^{k}(\Omega)$. 
    Let $T \in \mathcal T$ be any $n$-dimensional simplex of the triangulation. 
    We estimate 
    \begin{align*}
     \| \Interpolant_{\mathcal T}\omega \|_{L^{p}\Lambda^{k}(T)}
     &\leq 
     \sum_{S \subseteq T} \sum_{i \in I(S)} 
     \| \phi^\ast_{S,i} \left( P_T\omega \right)\phi_{S,i|T} \|_{L^{p}\Lambda^{k}(T)}
     \\&\leq 
     \sum_{S \subseteq T} \sum_{i \in I(S)} 
     C_{\mathrm{A}}
     \| P_T\omega \|_{L^{p}\Lambda^{k}(T)}
     \leq 
     \sum_{S \subseteq T} \sum_{i \in I(S)} 
     C_{\mathrm{A}}
     (1+C_{\mathrm{BH}})
     \| \omega \|_{L^{p}\Lambda^{k}({U}_{S,\mathcal T})}
     .
    \end{align*}
    The first inequality follows from this. 
    
    One notices that  
    \begin{align*}
    \Pi_{T}\omega =  \sum_{S \subseteq T} \sum_{i \in I(S)} \phi^\ast_{S,i} \left( \Pi_T\omega \right) \phi_{S,i|T}.
    \end{align*}
    The difference $\omega - \Interpolant_{\mathcal T}\omega$ over the simplex $T$ can now be rewritten:
    \begin{align*}
    ( \omega - \Interpolant_{\mathcal T}\omega )_{|T}
    &= 
    ( \omega - \Pi_T\omega + \Pi_T\omega - \Interpolant_{\mathcal T}\omega )_{|T}
    \\&=
    ( \omega - \Pi_T\omega )_{|T}
    + 
    \sum_{S \subseteq T} \sum_{i \in I(S)} \left(
    \phi^\ast_{S,i} \left( \Pi_T\omega \right) \phi_{S,i} - \phi^\ast_{S,i} \left( P_{S} \omega_{|T} \right) \phi_{S,i} 
    \right)_{|T}
    \\&=
    ( \omega - \Pi_T\omega )_{|T}
    + 
    \sum_{S \subseteq T} \sum_{i \in I(S)} 
    \phi^\ast_{S,i} \left( \Pi_T\omega - (P_{S} \omega)_{|T} \right) \phi_{S,i|T}
    . 
    \end{align*}
    Therefore it follows that  
    \begin{align*}
    &
    \| \omega - \Interpolant_{\mathcal T}\omega \|_{L^{p}\Lambda^{k}(T)}
    \leq 
    \| \omega - \Pi_T\omega \|_{L^{p}\Lambda^{k}(T)}
    + 
    \sum_{S \subseteq T} \sum_{i \in I(S)} 
    \| \phi^\ast_{S,i} \left( \Pi_T\omega - (P_{S} \omega)_{|T} \right) \phi_{S,i|T} \|_{L^{p}\Lambda^{k}(T)}
    . 
    \end{align*}
    From inequality \eqref{prop:biorthogonal:bound}, we get  
    for each subsimplex $S \subseteq T$ and index $i \in I(S)$ the estimate 
    \begin{align*}
    \| \phi^\ast_{S,i} \left( \Pi_T\omega - (P_{S} \omega)_{|T} \right) \phi_{S,i|T} \|_{L^{p}\Lambda^{k}(T)}
    &
    \leq 
    C_{\rm A} \| \Pi_T\omega - (P_{S} \omega)_{|T} \|_{L^{p}\Lambda^{k}(T)} 
    \\&\leq 
    C_{\rm A} \| \Pi_T\omega - \omega \|_{L^{p}\Lambda^{k}(T)} 
    + 
    C_{\rm A} \| \omega - (P_{S} \omega)_{|T} \|_{L^{p}\Lambda^{k}(T)}
    \\&\leq 
    C_{\rm A} \| \Pi_T\omega - \omega \|_{L^{p}\Lambda^{k}(T)} 
    + 
    C_{\rm A} \| \omega - P_{S} \omega \|_{L^{p}\Lambda^{k}({U}_{S,\mathcal T})}
    .
    \end{align*}
    With some constant $C_{0}$ which depends only on $n$ and the polynomial degree $r$, 
    one can summarize our observations then with the local estimate 
    \begin{align*}
    &
    \| \omega - \Interpolant_{\mathcal T}\omega \|_{L^{p}\Lambda^{k}(T)}
    \leq 
    (1+C_{0}C_{A})
    \| \omega - \Pi_{T} \omega \|_{L^{p}\Lambda^{k}(T)}
    +
    C_{\rm A} 
    \sum_{S \subseteq T} \sum_{i \in I(S)} 
    \| \omega - P_{S} \omega \|_{L^{p}\Lambda^{k}({U}_{S,\mathcal T})}
    .
    \end{align*}
    The desired theorem follows. 
\end{proof}

\begin{corollary}
  Let
  $m \in [0,r+1]$ if $\mathcal P\Lambda^{k}(\mathcal T) = \mathcal P_{r}^{ }\Lambda^{k}(\mathcal T)$
  and let 
  $m \in [0,r  ]$ otherwise. 
  Then for all $T \in \Delta_{n}(\mathcal T)$
  we have 
  \begin{align*}
    \| \omega - \Interpolant_{\mathcal T}\omega \|_{L^{p}\Lambda^{k}(T)}
    &\leq 
    C_{\mathcal{I},0}
    h_{T}^{m} 
    | \omega |_{W^{m,p}\Lambda^{k}({U}_{T,\mathcal T}^{\ast})},
    \quad 
    \omega \in W^{m,p}\Lambda^{k}(\Omega).
  \end{align*}
  Here, $C_{\mathcal{I},0} > 0$ depends only on $n$, $p$, the polynomial degree $r$, and the shape measure of the triangulation.
\end{corollary}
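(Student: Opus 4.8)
The plan is to substitute the two Bramble--Hilbert estimates \eqref{math:bramblehilbert}--\eqref{math:bramblehilbert:exteriorderivative} and the best-approximation property \eqref{math:bestapproximation} into the second inequality of Theorem~\ref{prop:clementerrorestimate}, and then to collapse all the neighbourhood norms onto the single set $U_{T,\mathcal T}^{\ast}$. Only the second estimate of Theorem~\ref{prop:clementerrorestimate} is needed; the $L^{p}$-stability estimate plays no role here.

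First I would record the purely geometric fact that $U_{S,\mathcal T} \subseteq U_{T,\mathcal T}^{\ast}$ whenever $S \subseteq T$ with $\dim T = n$: any $n$-dimensional simplex $T'$ with $S \subseteq T'$ satisfies $T' \cap T \supseteq S \neq \emptyset$, so $T'$ enters the union defining $U_{T,\mathcal T}^{\ast}$. Consequently $h_{S} \leq h_{T}$ and, restricting $L^{p}$-norms and seminorms to subsets, $|\omega|_{W^{m,p}\Lambda^{k}(U_{S,\mathcal T})} \leq |\omega|_{W^{m,p}\Lambda^{k}(U_{T,\mathcal T}^{\ast})}$ for every subsimplex $S \subseteq T$. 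Moreover the number of pairs $(S,i)$ with $S \subseteq T$ and $i \in I(S)$ is bounded by a constant depending only on $n$ and $r$; this absorbs into the final constant.

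Next I would bound the term $\| \omega - \Pi_{T}\omega \|_{L^{p}\Lambda^{k}(T)}$. By \eqref{math:bestapproximation} it is at most $C_{\Pi}\inf_{\psi \in \mathcal P\Lambda^{k}(T)} \| \omega - \psi \|_{L^{p}\Lambda^{k}(T)}$. If $\mathcal P\Lambda^{k}(\mathcal T) = \mathcal P_{r}\Lambda^{k}(\mathcal T)$, then $(P_{T}\omega)_{|T} \in \mathcal P_{r}\Lambda^{k}(T) = \mathcal P\Lambda^{k}(T)$ is an admissible competitor, so this infimum is at most $\| \omega - P_{T}\omega \|_{L^{p}\Lambda^{k}(U_{T,\mathcal T})} \leq C_{\rm BH} h_{T}^{m} |\omega|_{W^{m,p}\Lambda^{k}(U_{T,\mathcal T})}$ for $m \in [0,r+1]$, which may simply be folded into the sum over $S \subseteq T$. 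If instead $\mathcal P\Lambda^{k}(\mathcal T) = \mathcal P_{r}^{-}\Lambda^{k}(\mathcal T)$, I would use $\mathcal P_{r-1}\Lambda^{k}(T) \subseteq \mathcal P_{r}^{-}\Lambda^{k}(T)$ together with the classical Bramble--Hilbert lemma for componentwise polynomial approximation of degree $r-1$, giving the bound $C h_{T}^{m} |\omega|_{W^{m,p}\Lambda^{k}(T)}$ for $m \in [0,r]$ --- precisely the range of $m$ in the statement.

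Finally I would apply \eqref{math:bramblehilbert} to each summand, $\| \omega - P_{S}\omega \|_{L^{p}\Lambda^{k}(U_{S,\mathcal T})} \leq C_{\rm BH} h_{S}^{m} |\omega|_{W^{m,p}\Lambda^{k}(U_{S,\mathcal T})} \leq C_{\rm BH} h_{T}^{m} |\omega|_{W^{m,p}\Lambda^{k}(U_{T,\mathcal T}^{\ast})}$, sum over the boundedly many pairs $(S,i)$, and combine with the $\Pi_{T}$-bound to obtain the claimed estimate with $C_{\mathcal I,0}$ depending only on $n$, $p$, $r$, and $\mu(\mathcal T)$. The only point requiring genuine care is the restriction of $m$ in the trimmed case, which reflects the fact that $\mathcal P_{r}^{-}\Lambda^{k}$ reproduces full polynomials only up to degree $r-1$; everything else is bookkeeping with the constants $C_{\mathcal I}$, $C_{\Pi}$, $C_{\rm BH}$ and the mesh-combinatorial bounds established above.
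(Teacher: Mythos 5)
Your argument is correct and follows exactly the route the paper intends: the second estimate of Theorem~\ref{prop:clementerrorestimate} combined with \eqref{math:bramblehilbert}, \eqref{math:bestapproximation}, standard polynomial approximation on $T$ (which is where the restriction to $m\in[0,r]$ in the trimmed case enters, since $\mathcal P_{r}^{-}\Lambda^{k}$ only contains full polynomials of degree $r-1$), and the shape-measure bounds on the number and size of neighbouring simplices. The paper's proof is a one-line citation of precisely these ingredients, so your write-up is simply the detailed version of the same argument.
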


\begin{proof}
 This follows from Theorem~\ref{prop:clementerrorestimate}, Inequalities \eqref{math:bramblehilbert} and \eqref{math:bestapproximation},
 together with standard approximation estimates and the local finiteness of the triangulation. 
\end{proof}

This generalizes the Cl\'ement interpolant to the setting of finite element exterior calculus. 
In particular, we reproduce the order of approximation in the mesh size known from the scalar-valued theory. 
However, the reader will notice that we have only covered the case when no boundary conditions are imposed on the finite element space. 
The generalization to homogeneous boundary conditions,
either along the whole of the boundary or merely a part of it, is not yet covered by this construction. 
Indeed, the interpolant of this section does not preserve homogeneous boundary traces. 

The most obvious modification of the interpolant is simply setting all degrees of freedom along the boundary part to zero, which is also the approach followed in Cl\'ement's original paper~\cite{clement1975approximation}. 
While that straight-forward modification will eventually provide the desired result, 
it is not straight-forward how the best approximation properties can be proven under that modification. 
The next section will prepare technical tools to accomplish that target.

\section{Extending the Degrees of Freedom} \label{sec:neudof}

In order to advance our analysis of finite element interpolation, 
we need to rewrite degrees of freedom in a manner that defines them over differential forms 
with minimal smoothness assumptions.
The idea is that every degree of freedom associated to lower-dimensional simplices 
can be expressed in terms of traces over facets. 

\begin{theorem} \label{prop:extendeddof}
    For every $S,F \in \mathcal T$ with $\dim(F) = n - 1$ and $S \subseteq F$
    and every $i \in I(S)$ 
    there exists 
    $\mathring\xi_{F,S,i} \in C^{\infty}_{c}\Lambda^{n-k-1}(F)$
    such that 
    \begin{align} \label{prop:extendeddof:algebra}
        \int_{F} \mathring\xi_{F,S,i} \wedge \trace_{F} \omega
        = 
        \phi_{S,i}^{\ast}( \omega ),
        \quad 
        \omega \in \mathcal P_{r}\Lambda^{k}(T).
    \end{align}
    Furthermore, for every $T \in \Delta_{n}(\mathcal T)$ with $F \subseteq T$ 
    there exists $\Xi_{T,F,S,i} \in C^{\infty}\Lambda^{n-k-1}(T)$ such that 
    $\trace_{F} \Xi_{T,F,S,i} = \mathring\xi_{F,S,i}$
    and the support of $\Xi_{T,F,S,i}$ has positive distance from all facets of $T$ except $F$. 
    
    Moreover, there exists $C_{\Xi} > 0$, depending only on $n$, $p \in [1,\infty]$, the polynomial degree $r$, and the shape measure of the triangulation, such that 
    \begin{align} \label{prop:extendeddof:scaling}
        \| \Xi_{T,F,S,i} \|_{L^{p}\Lambda^{n-k-1}(T)} 
        \leq 
        C_{\Xi} 
        h_{S}^{ \frac{n}{p} - n+k+1},
        \quad  
        \| \exterioderivative\Xi_{T,F,S,i} \|_{L^{p}\Lambda^{n-k}(T)} 
        \leq 
        C_{\Xi} 
        h_{S}^{ \frac{n}{p} - n+k}.
    \end{align}
\end{theorem}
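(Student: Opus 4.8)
The plan is to start from the explicit form of the functionals. By the construction in Theorem~\ref{prop:biorthogonal} (together with the remark following it and \eqref{math:degreesoffreedoms}), each $\phi_{S,i}^{\ast}$ is of the shape $\omega \mapsto \int_{S} \eta_{S,i} \wedge \trace_{S}\omega$ for a fixed polynomial form $\eta_{S,i}$ on $S$, which we may take to be obtained by pullback from a reference simplex. Since $\trace_{S}\omega = \trace_{F,S}\bigl(\trace_{F}\omega\bigr)$ and $\trace_{F}$ maps $\mathcal P_{r}\Lambda^{k}(T)$ onto $\mathcal P_{r}\Lambda^{k}(F)$, the functional $\phi_{S,i}^{\ast}$ restricted to $\mathcal P_{r}\Lambda^{k}(T)$ factors through the facet trace: it equals $\psi \mapsto L_{F,S,i}(\psi) := \int_{S}\eta_{S,i}\wedge\trace_{F,S}\psi$, evaluated at $\psi = \trace_{F}\omega$, where $L_{F,S,i}$ is a well-defined linear functional on $\mathcal P_{r}\Lambda^{k}(F)$. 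Hence \eqref{prop:extendeddof:algebra} will follow as soon as we represent $L_{F,S,i}$ in the form $\psi \mapsto \int_{F}\mathring\xi_{F,S,i}\wedge\psi$ with $\mathring\xi_{F,S,i} \in C^{\infty}_{c}\Lambda^{n-k-1}(F)$.

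The key point is that such a representation exists because the wedge pairing $(\xi,\psi)\mapsto\int_{F}\xi\wedge\psi$ on $C^{\infty}_{c}\Lambda^{n-k-1}(F)\times\mathcal P_{r}\Lambda^{k}(F)$ is rich enough. Indeed, if $\psi\in\mathcal P_{r}\Lambda^{k}(F)$ satisfies $\int_{F}\xi\wedge\psi=0$ for every $\xi\in C^{\infty}_{c}\Lambda^{n-k-1}(F)$, then choosing $\xi$ to be an arbitrary scalar test function times a single basic $(n-k-1)$-covector and invoking the fundamental lemma of the calculus of variations forces each coefficient of $\psi$ to vanish on the relative interior of $F$, hence $\psi=0$. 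Therefore the linear map $C^{\infty}_{c}\Lambda^{n-k-1}(F)\to\mathcal P_{r}\Lambda^{k}(F)^{\ast}$, $\xi\mapsto\bigl(\psi\mapsto\int_{F}\xi\wedge\psi\bigr)$, has image annihilated by no nonzero element of $\mathcal P_{r}\Lambda^{k}(F)$, and thus, by finite-dimensionality, is surjective. Any preimage of $L_{F,S,i}$ is the desired $\mathring\xi_{F,S,i}$. To prepare the scaling bounds, I would run this construction once on a fixed reference pair $\hat S\subseteq\hat F$ for each value of $\dim S$ and each index, and then define $\mathring\xi_{F,S,i}$ on the actual facet by pullback along the affine map identifying $\hat F$ with $F$; affine naturality of traces, integrals, and the polynomial spaces ensures the pulled-back form still represents $\phi_{S,i}^{\ast}$ on $\mathcal P_{r}\Lambda^{k}(T)$.

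Next I would extend $\mathring\xi_{F,S,i}$ from $F$ to $T$. In barycentric coordinates on $T$ adapted to $F$ — say $F=\{\lambda_{j}=0\}$ with $\lambda_{j}$ the barycentric coordinate of the vertex opposite $F$ — set $\Xi_{T,F,S,i} := \chi(\lambda_{j})\,\pi^{\ast}\mathring\xi_{F,S,i}$, where $\pi\colon T\to F$ is the affine retraction forgetting the $\lambda_{j}$-direction and $\chi$ is a smooth cutoff with $\chi(0)=1$ supported in a small interval $[0,\varepsilon)$. Then $\Xi_{T,F,S,i}$ is smooth on $T$, its trace onto $F$ is $\chi(0)\,\mathring\xi_{F,S,i}=\mathring\xi_{F,S,i}$, and its support lies in $\pi^{-1}\bigl(\supp\mathring\xi_{F,S,i}\bigr)\cap\{\lambda_{j}<\varepsilon\}$; since $\supp\mathring\xi_{F,S,i}$ is a compact subset of the relative interior of $F$, for $\varepsilon$ small enough this set has positive distance from every facet of $T$ other than $F$. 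Again one carries out this construction on the reference configuration and transports it by affine pullback, so that the final $\Xi_{T,F,S,i}$ is a pullback $(\Phi^{-1})^{\ast}\hat\Xi$ of a fixed reference form under an affine map $\Phi\colon\hat T\to T$.

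Finally, the scaling estimates \eqref{prop:extendeddof:scaling} are then immediate. Using the standard bounds $\|D\Phi^{-1}\|\lesssim h_{T}^{-1}$ and $|\det D\Phi|\lesssim h_{T}^{n}$ with constants controlled by $\mu(\mathcal T)$, the change-of-variables estimate for pullbacks of an $(n-k-1)$-form gives $\|\Xi_{T,F,S,i}\|_{L^{p}\Lambda^{n-k-1}(T)}\lesssim h_{T}^{\,n/p-(n-k-1)}\|\hat\Xi\|_{L^{p}\Lambda^{n-k-1}(\hat T)}$; since $\exterioderivative$ commutes with pullback and $\exterioderivative\Xi_{T,F,S,i}=(\Phi^{-1})^{\ast}\exterioderivative\hat\Xi$ is an $(n-k)$-form, likewise $\|\exterioderivative\Xi_{T,F,S,i}\|_{L^{p}\Lambda^{n-k}(T)}\lesssim h_{T}^{\,n/p-(n-k)}\|\exterioderivative\hat\Xi\|_{L^{p}\Lambda^{n-k}(\hat T)}$. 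Replacing $h_{T}$ by $h_{S}$ costs only a factor depending on $\mu(\mathcal T)$ and $n$, because $h_{S}$ and $h_{T}$ are comparable whenever $S\subseteq T$ and $\dim T=n$. The main obstacle I anticipate is the representation step: recognizing that it suffices to match $\phi_{S,i}^{\ast}$ on the finite-dimensional space $\mathcal P_{r}\Lambda^{k}(T)$ after factoring through the facet trace, and that the wedge pairing with compactly supported smooth forms on $F$ surjects onto $\mathcal P_{r}\Lambda^{k}(F)^{\ast}$; the extension and scaling arguments are then routine.
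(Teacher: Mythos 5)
Your proposal is correct and takes essentially the same route as the paper: both realize $\phi_{S,i}^{\ast}$, after observing it depends only on $\trace_{F}\omega$, as integration against a compactly supported smooth $(n-k-1)$-form on $F$ built by transport from a reference facet (the paper via a dual basis $\mathcal B\Lambda^{n-k-1}(F)$ to an augmented basis of $\mathcal P_{r}\Lambda^{k}(F)$, you via nondegeneracy of the wedge pairing, which is the same linear-algebra fact), followed by a cutoff extension into $T$ and reference-element scaling for \eqref{prop:extendeddof:scaling}. The only small imprecision is the ``affine retraction forgetting the $\lambda_{j}$-direction'' (that map is not affine); this is harmless, since one can extend $\mathring\xi_{F,S,i}$ by zero to the hyperplane containing $F$ and use any affine projection onto that hyperplane.
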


\begin{proof}
    As to simplify the exposition, 
    this proof is to be read as a continuation of the proof of Theorem~\ref{prop:biorthogonal},
    and we tacitly assume all technical details made in that proof. 
    
    For every $S \in \Delta(F)$ and $i \in I(S)$ we let $\phi^{F}_{S,i} := \trace_{F} \phi_{S,i}$. 
    So $\left\{ \phi^{F}_{S,i} \right\}_{S \in \Delta(F), i \in I(S)}$ is a basis of $\mathcal P\Lambda^{k}(F)$.
    If $\mathcal P\Lambda^{k}(F) \neq \mathcal P_{r}\Lambda^{k}(F)$, we augment to a basis of $\mathcal P_{r}\Lambda^{k}(F)$
    by including differential forms that are first defined on a reference facet and then transported to $F$;
    we write $\mathcal A\Lambda^{k}(F)$ for the resulting basis of $\mathcal P_{r}\Lambda^{k}(F)$. 
    Note that $\mathcal A\Lambda^{k}(F)$ can be defined uniformly via transport from a reference facet. 
    
    One can find a set $\mathcal B\Lambda^{n-k-1}(F) \subset C^{\infty}_{c}\Lambda^{n-k-1}(F)$
    whose members represent the dual basis of $\mathcal A\Lambda^{k}(F)$ by integration over $F$;
    this construction can be done on a reference facet first and then be transported to $F$.
    Since the degrees of freedom are defined via transport from a reference simplex as well,
    one can build $\mathring\xi_{F,S,i} \in C^{\infty}_{c}\Lambda^{n-k-1}(F)$ as desired
    by a linear combination of members of $\mathcal B\Lambda^{n-k-1}(F)$. 
    
    Having constructed $\mathring\xi_{F,S,i} \in C^{\infty}_{c}\Lambda^{n-k-1}(F)$ satisfying \eqref{prop:extendeddof:algebra},
    one easily constructs $\Xi_{T,F,S,i} \in C^{\infty}\Lambda^{n-k-1}(T)$ 
    satisfying $\trace_{F} \Xi_{T,F,S,i} = \mathring\xi_{F,S,i}$
    and such that $\supp \Xi_{T,F,S,i}$ has positive distance from all facets of $T$ except $F$.  
    The existence of a constant $C_{\Xi} > 0$ satisfying \eqref{prop:extendeddof:scaling} follows easily
    from a scaling argument. 
\end{proof}

Any simplex $S \in \mathcal T$ is generally contained in different faces and full-dimensional simplices of the triangulation. 
For technical reasons, for any simplex $S \in \mathcal T$ of dimension at most $n-1$ 
we fix an arbitrary face $F_{S} \in \mathcal T$ with $S \subseteq F$ and a $n$-dimensional simplex $T_{S} \in \mathcal T$ with $F_{S} \subseteq T_{S}$.
We also introduce the abbreviations 
\begin{align} \label{math:festsetzen}
 \mathring\xi_{S,i} := \mathring\xi_{F_{S},S,i}, \quad \Xi_{S,i} := \Xi_{T_{S},F_{S},S,i}.
\end{align}
However, we make one modification if $S \in \mathcal U$: in that case, we require additionally that $F_{S} \in \mathcal U$.
This enforces that degrees of freedom associated to the boundary part $\Gamma$ depend on values over facets within that boundary part.

\begin{remark}
 We make generous use of the following identity. 
 For any $S, F, T \in \mathcal T$ with $T \in \Delta_{n}(\mathcal T)$, $F \in \mathcal F(T)$, $S \subseteq F$,
 and all $i \in I(S)$, 
 the differential forms $\Xi_{T,F,S,i}$ and $\mathring\xi_{F,S,i}$
 satisfy 
 \begin{align*}
    o(F,T) 
    \int_{F} \mathring\xi_{F,S,i} \wedge \trace_{F} \omega
    = 
    \int_{T} \exterioderivative\Xi_{T,F,S,i} \wedge \omega + (-1)^{n-k-1} \Xi_{T,F,S,i} \wedge \exterioderivative\omega,
    \quad 
    \omega \in C^{\infty}\Lambda^{k}(T).
 \end{align*}
 The significance of that formula is the right-hand side substitutes the left-hand side 
 in lieu of a notion of traces if $\omega$'s coefficients are very rough functions. 
 The right-hand side is well-defined even if, say, $\omega \in H\Lambda^{k}(T)$ 
 or more generally $\omega \in \mathcal W^{p,q}\Lambda^{k}(T)$ for any $p,q \in [1,\infty]$. 
\end{remark}

\section{Local Approximation Theory with Partial Boundary Conditions} \label{sec:clementboundary}

We define the modified Cl\'ement interpolant by 
\begin{align}
    \Interpolant_{\mathcal T,\mathcal U}: L^{p}\Lambda^{k}(\Omega) \rightarrow \mathcal P\Lambda^{k}(\mathcal T,\mathcal U),
    \quad 
    \omega \mapsto 
    \sum_{ \substack{ S \in \mathcal T \\ S \notin \mathcal U } } \sum_{i \in I(S)} 
    \phi^\ast_{S,i} \left( P_{S} \omega \right) \phi_{S,i}.
\end{align}
It is evident that $\Interpolant_{\mathcal T,\mathcal U}$ takes values in the finite element space $\mathcal P\Lambda^{k}(\mathcal T,\mathcal U)$
with homogeneous boundary conditions along the boundary part $\Gamma$. 
With the tools from the preceding section, one can prove error estimates.

\begin{theorem} \label{prop:clementerrorestimate:boundary}
    There exists $C_{\mathcal{I},\mathcal U} > 0$, depending only on $n$, $p$, the polynomial degree $r$, and the shape measure of the triangulation,
    such that the following is true:
    for all $T \in \Delta_{n}(\mathcal T)$, 
    \begin{align*}
        \| \Interpolant_{\mathcal T,\mathcal U}\omega \|_{L^{p}\Lambda^{k}(T)}
        &\leq 
        C_{\mathcal{I,\mathcal U}}
        \| \omega \|_{L^{p}\Lambda^{k}({U}_{T,\mathcal T}^{\ast})},
        \quad 
        \omega \in L^{p}\Lambda^{k}(\Omega),
    \end{align*}
    and for all $T \in \Delta_{n}(\mathcal T)$ one has
    \begin{align*}
        \| \omega - \Interpolant_{\mathcal T,\mathcal U}\omega \|_{L^{p}\Lambda^{k}(T)}
        &\leq 
        \| \omega - \Interpolant_{\mathcal T}\omega \|_{L^{p}\Lambda^{k}(T)}
        \\&\qquad
        +
        C_{\mathcal{I,\mathcal U}}
        \sum_{ \substack{ S \subseteq T \\ S \in \mathcal U } } \sum_{i \in I(S)} 
        \| \omega - P_{S} \omega \|_{L^{p}\Lambda^{k}( {U}_{S,\mathcal T} )}
        +
        h_{S} \| \exterioderivative\omega - \exterioderivative P_{S} \omega \|_{L^{p}\Lambda^{k+1}( {U}_{S,\mathcal T} )}
    \end{align*}
    whenever $\omega \in \mathcal W^{p,p}\Lambda^{k}(\Omega,\Gamma)$.
\end{theorem}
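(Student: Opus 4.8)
The plan is to compare the modified interpolant $\Interpolant_{\mathcal T,\mathcal U}$ against the unmodified one $\Interpolant_{\mathcal T}$ from Section~\ref{sec:clement}, so that the first (stability) estimate follows almost verbatim from the proof of Theorem~\ref{prop:clementerrorestimate} — the only change is that we sum over fewer degrees of freedom, which cannot increase the bound — and the second (error) estimate reduces to controlling the \emph{difference} $\Interpolant_{\mathcal T}\omega - \Interpolant_{\mathcal T,\mathcal U}\omega$. By construction this difference is precisely
\begin{align*}
 (\Interpolant_{\mathcal T}\omega - \Interpolant_{\mathcal T,\mathcal U}\omega)_{|T}
 =
 \sum_{ \substack{ S \subseteq T \\ S \in \mathcal U } } \sum_{i \in I(S)}
 \phi^\ast_{S,i}\left( P_{S}\omega \right) \phi_{S,i|T},
\end{align*}
so by the triangle inequality it suffices to bound $\| \phi^\ast_{S,i}(P_S\omega)\phi_{S,i|T} \|_{L^{p}\Lambda^{k}(T)}$ for each $S \subseteq T$ with $S \in \mathcal U$.

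The key idea is that because $\omega \in \mathcal W^{p,p}\Lambda^{k}(\Omega,\Gamma)$ satisfies partial boundary conditions along $\Gamma$, and because $S \in \mathcal U$ forces (by our choices in \eqref{math:festsetzen}) the associated facet $F_{S}$ to lie in $\mathcal U \subseteq \Gamma$, the degree of freedom $\phi^\ast_{S,i}$ applied to $\omega$ \emph{itself} — expressed via the trace formula of Theorem~\ref{prop:extendeddof} and the integration-by-parts identity in the Remark following it — vanishes. Concretely, using $o(F_S,T_S)\int_{F_S} \mathring\xi_{S,i}\wedge\trace_{F_S}\omega = \int_{T_S}\exterioderivative\Xi_{S,i}\wedge\omega + (-1)^{n-k-1}\Xi_{S,i}\wedge\exterioderivative\omega$, the right-hand side is zero for $\omega$ satisfying partial boundary conditions along $\Gamma \supseteq F_S$ (after a density argument passing from $C^\infty$ to $\mathcal W^{p,p}$). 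Therefore
\begin{align*}
 \phi^\ast_{S,i}(P_S\omega)
 =
 \phi^\ast_{S,i}(P_S\omega - \omega)
 =
 o(F_S,T_S)\left( \int_{T_S}\exterioderivative\Xi_{S,i}\wedge(P_S\omega-\omega) + (-1)^{n-k-1}\Xi_{S,i}\wedge(\exterioderivative P_S\omega - \exterioderivative\omega) \right),
\end{align*}
and now Hölder's inequality together with the scaling bounds \eqref{prop:extendeddof:scaling} on $\|\Xi_{S,i}\|_{L^{p'}}$ and $\|\exterioderivative\Xi_{S,i}\|_{L^{p'}}$ (with conjugate exponent $p'$), combined with \eqref{prop:biorthogonal:boundphi} for $\|\phi_{S,i|T}\|_{L^p}$, yields
\begin{align*}
 \| \phi^\ast_{S,i}(P_S\omega)\phi_{S,i|T} \|_{L^{p}\Lambda^{k}(T)}
 \leq
 C \left( \| \omega - P_S\omega \|_{L^p\Lambda^k(U_{S,\mathcal T})} + h_S \| \exterioderivative\omega - \exterioderivative P_S\omega \|_{L^p\Lambda^{k+1}(U_{S,\mathcal T})} \right),
\end{align*}
where the powers of $h_S$ cancel exactly: $h_S^{n/p - n+k+1} \cdot h_S^{n - n/p - k}$ contributes no net power for the first term and one factor $h_S$ for the second, matching the claimed inequality. (One must note $\supp\Xi_{S,i}\subseteq U_{S,\mathcal T}$, or more precisely is contained in $T_S \subseteq U_{S,\mathcal T}$, to localize the right-hand side norms.)

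The main obstacle I anticipate is the density/regularity argument needed to justify that $\phi^\ast_{S,i}(\omega) = 0$ for $\omega \in \mathcal W^{p,p}\Lambda^k(\Omega,\Gamma)$: the integration-by-parts identity in the Remark is stated for $\omega \in C^\infty\Lambda^k(T)$, whereas here $\omega$ is merely in $\mathcal W^{p,p}$, and the defining identity \eqref{math:partialboundaryconditions} for partial boundary conditions is localized to small balls $B_\rho(x)$ around boundary points, not globally over the facet $F_S$. Bridging this gap requires either a partition-of-unity argument over $F_S$ together with the fact that $\mathring\xi_{S,i}$ is compactly supported in the relative interior of $F_S$ (so that one can cover $\supp\Xi_{S,i}\cap F_S$ by finitely many such balls within $\Gamma$), or invoking that smooth forms satisfying boundary conditions are dense in $\mathcal W^{p,p}\Lambda^k(\Omega,\Gamma)$ in the graph norm — a fact whose proof for weakly Lipschitz domains is nontrivial but available in the cited literature. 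Aside from this point, the argument is a routine assembly of the scaling estimates already established, so I would isolate the vanishing claim as a short lemma before the main proof. Finally, the stability estimate's proof is obtained by repeating the first display chain in the proof of Theorem~\ref{prop:clementerrorestimate} with the index set $\{S \subseteq T\}$ replaced by $\{S\subseteq T : S\notin\mathcal U\}$, which only removes nonnegative summands.
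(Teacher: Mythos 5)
Your proposal is correct and follows essentially the same route as the paper's proof: the same decomposition $\omega-\Interpolant_{\mathcal T,\mathcal U}\omega = (\omega-\Interpolant_{\mathcal T}\omega) + \sum_{S\in\mathcal U}\phi^\ast_{S,i}(P_S\omega)\phi_{S,i}$, the same use of Theorem~\ref{prop:extendeddof} and the integration-by-parts identity to replace $P_S\omega$ by $P_S\omega-\omega$ (the paper likewise invokes the partial boundary condition to annihilate the $\omega$-contribution, glossing over the localization/density step you rightly flag), and the same H\"older-plus-scaling bookkeeping. The only blemish is the parenthetical exponent accounting (the factor $h_S^{n/q-n+k}$ for $\exterioderivative\Xi_{S,i}$ pairs with $h_S^{n/p-k}$ to give $h_S^{0}$, and $h_S^{n/q-n+k+1}$ pairs with it to give $h_S^{1}$), which does not affect the stated conclusion.
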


\begin{proof}
    Let $T \in \mathcal T$ be any $n$-dimensional simplex. 
    If $T$ has no subsimplex in $\mathcal U$,
    then 
    \begin{align*}
        \left( \Interpolant_{\mathcal T,\mathcal U} \omega \right)_{|T}
        = 
        \left( \Interpolant_{\mathcal T} \omega \right)_{|T},
        \quad 
        \omega \in L^{p}\Lambda^{k}(\Omega),
    \end{align*}
    and one can simply apply Theorem~\ref{prop:clementerrorestimate}. 

    Let us assume instead that $T \in \mathcal T$ 
    is an $n$-dimensional simplex which has a subsimplex contained in $\mathcal U$. 
    Then the first inequality follows similarly as in the proof of Theorem~\ref{prop:clementerrorestimate},
    so we only need to study the second inequality. 
    Obviously,
    \begin{align*}
        \omega_{|T} - \left( \Interpolant_{\mathcal T,\mathcal U} \omega \right)_{|T} 
        =
        \omega_{|T} - \left( \Interpolant_{\mathcal T} \omega \right)_{|T} 
        +
        \sum_{ \substack{ S \subseteq T \\ S \in \mathcal U } } \sum_{i \in I(S)} 
        \phi^\ast_{S,i} \left( (P_{S} \omega)_{|T} \right) \phi_{S,i|T}
    \end{align*}
    Now recall the identity
    \begin{align*}
        \phi^\ast_{S,i} \left( (P_{S} \omega)_{|T} \right)
        =
        \int_{F_{S}} \mathring\xi_{S,i} \wedge \trace_{T_{S},F_{S}} (P_{S} \omega)_{|T_{S}},
    \end{align*}
    which is valid because $(P_{S} \omega)_{|T} \in \mathcal P_{r}\Lambda^{k}(T)$. Now, 
    \begin{gather*}
        \int_{F_{S}} \mathring\xi_{S,i} \wedge \trace_{T_{S},F_{S}} (P_{S} \omega)_{|T_{S}}
        =
        o( F_{S}, T_{S} )
        \int_{T_{S}} \exterioderivative\Xi_{S,i} \wedge (P_{S} \omega)_{|T_{S}} + (-1)^{n-k-1} \Xi_{S,i} \wedge \exterioderivative(P_{S} \omega)_{|T_{S}}
    \end{gather*}
    and since $\omega$ satisfies partial boundary conditions along the boundary part $\Gamma$
    and $F_{S} \subseteq \overline \Gamma$, we get 
    \begin{align*}
        &
        \int_{T_{S}} \exterioderivative\Xi_{S,i} \wedge (P_{S} \omega)_{|T_{S}} + (-1)^{n-k-1} \Xi_{S,i} \wedge \exterioderivative(P_{S} \omega)_{|T_{S}}
        \\&\qquad
        =
        \int_{T_{S}}
        \exterioderivative\Xi_{S,i} \wedge \left( (P_{S} \omega)_{|T_{S}} - \omega \right)
        + 
        \int_{T_{S}}
        (-1)^{n-k-1} \Xi_{S,i} \wedge \exterioderivative\left( (P_{S} \omega)_{|T_{S}} - \omega \right)
        .
    \end{align*}
    Thus, letting 
    ${q} = p/(p-1) \in [1,\infty]$, 
    we use the integration by parts formula and H\"older's inequality to find 
    \begin{align*}
        \left| \phi^\ast_{S,i} \left( (P_{S} \omega)_{|T} \right) \right|
        &\leq 
        \| \exterioderivative\Xi_{S,i} \|_{L^{{q}}\Lambda^{n-k}(T_{S})} 
        \| P_{S} \omega - \omega \|_{L^{p}\Lambda^{k}(T_{S})}
        \\&\qquad+ 
        \| \Xi_{S,i} \|_{L^{{q}}\Lambda^{n-k-1}(T_{S})} 
        \| \exterioderivative P_{S} \omega - \exterioderivative\omega \|_{L^{p}\Lambda^{k+1}(T_{S})}
        .
    \end{align*}
    Hence we find that 
    \begin{align*}
        &
        \| \omega - \Interpolant_{\mathcal T,\mathcal U} \omega \|_{L^{p}\Lambda^{k}(T)}
        \\&\qquad\leq
        \| \omega - \Interpolant_{\mathcal T} \omega \|_{L^{p}\Lambda^{k}(T)}
        +
        \sum_{ \substack{ S \subseteq T \\ S \in \mathcal U } } \sum_{i \in I(S)} 
        \left| \phi^\ast_{S,i} \left( (P_{S} \omega)_{|T} \right) \right| 
        \| \phi_{S,i} \|_{L^{p}\Lambda^{k}(T)}
        \\&\qquad\leq
        \| \omega - \Interpolant_{\mathcal T} \omega \|_{L^{p}\Lambda^{k}(T)}
        \\&\qquad\qquad+
        \sum_{ \substack{ S \subseteq T \\ S \in \mathcal U } } \sum_{i \in I(S)} 
        \| \exterioderivative\Xi_{S,i} \|_{L^{{q}}\Lambda^{n-k}(T_{S})}
        \| \omega - P_{S} \omega \|_{L^{p}\Lambda^{k}(T_{S})}
        \| \phi_{S,i} \|_{L^{p}\Lambda^{k}(T)}
        \\
        &\qquad\qquad+
        \sum_{ \substack{ S \subseteq T \\ S \in \mathcal U } } \sum_{i \in I(S)} 
        \| \Xi_{S,i} \|_{L^{{q}}\Lambda^{n-k-1}(T_{S})}
        \| \exterioderivative \omega  - \exterioderivative P_{S} \omega \|_{L^{p}\Lambda^{k+1}(T_{S})}
        \| \phi_{S,i} \|_{L^{p}\Lambda^{k}(T)}
    \end{align*}
    We recall the bounds 
    \begin{gather*}
        \| \phi_{S,i} \|_{L^{p}\Lambda^{k}(T)}
        \leq 
        C_{\mathrm{A}} h_{S}^{ \frac{n}{p} - k }
        \\
        \| \Xi_{S,i} \|_{L^{{q}}\Lambda^{n-k-1}(T_{S})} 
        \leq 
        C_{\Xi} 
        h_{S}^{ \frac{n}{{q}} - n+k+1},
        \quad 
        \| \exterioderivative\Xi_{S,i} \|_{L^{{q}}\Lambda^{n-k}(T_{S})} 
        \leq 
        C_{\Xi} 
        h_{S}^{ \frac{n}{{q}} - n+k}.
    \end{gather*}
    Since $1 = 1/p + 1/{q}$, putting this together produces the desired inequality. 
\end{proof}

\begin{corollary}
  Let
  $m \in [0,r+1]$ if $\mathcal P\Lambda^{k}(\mathcal T) = \mathcal P_{r}^{ }\Lambda^{k}(\mathcal T)$
  and let 
  $m \in [0,r  ]$ otherwise. 
  Write $l := \max(0,m-1)$. 
  Then for all $T \in \Delta_{n}(\mathcal T)$ and all $\omega \in \mathcal W^{p,p}\Lambda^{k}(\Omega) \cap W^{m,p}\Lambda^{k}(\Omega,\Gamma)$
  one has 
  \begin{align*}
    \| \omega - \Interpolant_{\mathcal T,\mathcal U}\omega \|_{L^{p}\Lambda^{k}(T)}
    &\leq 
    C_{\mathcal{I},\mathcal U,0}
    \left( 
        h_{T}^{m} 
        | \omega |_{W^{m,p}\Lambda^{k  }({U}_{T,\mathcal T}^{\ast})}
        +
        h_{T}^{l+1} 
        | \exterioderivative\omega |_{W^{l,p}\Lambda^{k+1}({U}_{T,\mathcal T}^{\ast})}
    \right)
    .
  \end{align*}
  Here, $C_{\mathcal{I},\mathcal U,0} > 0$ depends only on $n$, $p$, the polynomial degree $r$, and the shape measure of the triangulation.
\end{corollary}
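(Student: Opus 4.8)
The plan is to combine the local error estimate of Theorem~\ref{prop:clementerrorestimate:boundary} with the Bramble--Hilbert estimates \eqref{math:bramblehilbert} and \eqref{math:bramblehilbert:exteriorderivative} for the projections $P_{S}$, with the approximation estimate of the Corollary following Theorem~\ref{prop:clementerrorestimate} for the term $\| \omega - \Interpolant_{\mathcal T}\omega \|_{L^{p}\Lambda^{k}(T)}$, and finally with the local finiteness and shape regularity of the triangulation. First I would fix $T \in \Delta_{n}(\mathcal T)$ and $\omega$ as in the statement, and note that the hypotheses are exactly what is needed to invoke Theorem~\ref{prop:clementerrorestimate:boundary}: membership in $\mathcal W^{p,p}\Lambda^{k}(\Omega)$ guarantees $\exterioderivative\omega \in L^{p}\Lambda^{k+1}(\Omega)$, and $\omega \in W^{m,p}\Lambda^{k}(\Omega,\Gamma)$ supplies the partial boundary conditions along $\Gamma$ used in the integration-by-parts step of that proof. (For $m \geq 1$ the condition $\omega \in \mathcal W^{p,p}\Lambda^{k}(\Omega)$ is automatic, since $\exterioderivative\omega$ is then built from first derivatives lying in $W^{m-1,p}$; it is a genuine restriction only when $m=0$.)

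Next I would bound the first term on the right-hand side of Theorem~\ref{prop:clementerrorestimate:boundary} by the Corollary to Theorem~\ref{prop:clementerrorestimate}, which under the present range of $m$ gives $\| \omega - \Interpolant_{\mathcal T}\omega \|_{L^{p}\Lambda^{k}(T)} \leq C\, h_{T}^{m} \, | \omega |_{W^{m,p}\Lambda^{k}({U}_{T,\mathcal T}^{\ast})}$. For each subsimplex $S \subseteq T$ with $S \in \mathcal U$ and each $i \in I(S)$, I would estimate the two contributions in the sum of Theorem~\ref{prop:clementerrorestimate:boundary} using \eqref{math:bramblehilbert} and \eqref{math:bramblehilbert:exteriorderivative}: one has $\| \omega - P_{S}\omega \|_{L^{p}\Lambda^{k}({U}_{S,\mathcal T})} \leq C\, h_{S}^{m}\, | \omega |_{W^{m,p}\Lambda^{k}({U}_{S,\mathcal T})}$, while $\exterioderivative\omega \in W^{l,p}\Lambda^{k+1}(\Omega)$ with $l = \max(0,m-1)$, whence $h_{S}\| \exterioderivative\omega - \exterioderivative P_{S}\omega \|_{L^{p}\Lambda^{k+1}({U}_{S,\mathcal T})} \leq C\, h_{S}^{l+1}\, | \exterioderivative\omega |_{W^{l,p}\Lambda^{k+1}({U}_{S,\mathcal T})}$; the extra power here is precisely the exponent $l$ of \eqref{math:bramblehilbert:exteriorderivative} raised by the factor $h_{S}$ present in Theorem~\ref{prop:clementerrorestimate:boundary}. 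At this point one must check that $l$ lies in the admissible range $[0,r]$ of \eqref{math:bramblehilbert:exteriorderivative}, which holds both in the $\mathcal P_{r}$ case (where $m \leq r+1$, so $l \leq r$) and in the $\mathcal P_{r}^{-}$ case (where $m \leq r$, so $l \leq r-1$).

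Finally I would assemble the pieces. Since $S \subseteq T$ implies $h_{S} \leq h_{T}$ and ${U}_{S,\mathcal T} \subseteq {U}_{T,\mathcal T}^{\ast}$, each local seminorm over ${U}_{S,\mathcal T}$ is dominated by the corresponding seminorm over ${U}_{T,\mathcal T}^{\ast}$ and each $h_{S}^{m}$, $h_{S}^{l+1}$ by $h_{T}^{m}$, $h_{T}^{l+1}$; moreover the number of pairs $(S,i)$ with $S \subseteq T$ and $i \in I(S)$ is bounded by a constant depending only on $n$ and $r$. Collecting these finitely many terms and absorbing all constants ($C_{\rm{BH}}$, $C_{\mathcal{I},0}$, $C_{\mathcal{I},\mathcal U}$) into a single constant $C_{\mathcal{I},\mathcal U,0}$ depending only on $n$, $p$, $r$, and $\mu(\mathcal T)$ yields the asserted estimate. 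I do not expect a genuine obstacle here; the proof is essentially bookkeeping, and the only points demanding care are the verification that the exponents $m$ and $l = \max(0,m-1)$ remain in the ranges allowed by \eqref{math:bramblehilbert} and \eqref{math:bramblehilbert:exteriorderivative}, and keeping track of which neighbourhood (${U}_{S,\mathcal T}$ versus ${U}_{T,\mathcal T}^{\ast}$) each seminorm is evaluated over.
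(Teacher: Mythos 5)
Your proposal is correct and follows essentially the same route as the paper's (much terser) proof: apply Theorem~\ref{prop:clementerrorestimate:boundary}, observe $\exterioderivative\omega \in W^{l,p}\Lambda^{k+1}(\Omega)$ with $l = \max(0,m-1)$, bound the individual terms via \eqref{math:bramblehilbert}--\eqref{math:bestapproximation:exteriorderivative} (equivalently, via the earlier corollary for the $\Interpolant_{\mathcal T}$ term), and collect using $h_{S} \leq h_{T}$, ${U}_{S,\mathcal T} \subseteq {U}_{T,\mathcal T}^{\ast}$, and the shape-regularity bound on the number of contributing pairs $(S,i)$. Your additional checks on the admissible exponent ranges for $m$ and $l$ are exactly the bookkeeping the paper leaves implicit.
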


\begin{proof}
 We observe $\exterioderivative \omega \in W^{l,p}\Lambda^{k+1}(\Omega)$ for $\omega \in \mathcal W^{p,p}\Lambda^{k}(\Omega) \cap W^{m,p}\Lambda^{k}(\Omega)$. 
 The results follows by combining Theorem~\ref{prop:clementerrorestimate:boundary}, 
 Inequalities \eqref{math:bramblehilbert}--\eqref{math:bestapproximation:exteriorderivative},
 and standard estimates. 
\end{proof}

\section{A Scott-Zhang-type Interpolant} \label{sec:scottzhang}

The Cl\'ement operator, with or without boundary conditions, has only minimal regularity assumptions on its argument:
the operator is bounded over differential forms whose coefficients are in an $L^{p}$ space.
Approximation estimates in terms of the mesh size follow from additional regularity of the interpolated differential form. 

However, the quantitative estimates for the Cl\'ement operator in either variation require smoothness 
of the interpolated differential form over patches of cells, across cell boundaries. 
The Scott-Zhang interpolation for functions in $W^{1,2}(\Omega)$ overcomes this restriction 
and yields approximation error estimates of the same order as the Cl\'ement interpolant 
but merely requiring piecewise higher smoothness. 
One consequence is that continuous Lagrange elements have approximation capability 
equivalent to discontinuous Lagrange elements provided the function has square-integrable first derivatives. 
Furthermore, the Scott-Zhang interpolant preserves homogeneous partial boundary conditions.
In this section we generalize the Scott-Zhang interpolant and the error estimate to the setting of differential forms. 
\\

When $S \in \mathcal T$ with $\dim(S) = n$, then we introduce the mapping 
\begin{align*}
    K_{S,i} : L^{p}\Lambda^{k}(\Omega) \rightarrow \mathbb R, \quad \omega \mapsto \phi^{\ast}_{S,i}(\omega).
\end{align*}
By the choice of degrees of freedom in Section~\ref{sec:femspaces}, these functionals are defined via integration against a smooth differential form over $S$,
and hence they are well-defined even for integrable differential forms. 

If instead $S \in \mathcal T$ with $\dim(S) < n$, then we consider the mapping 
\begin{align*}
    K_{S,i} : \mathcal W^{p,p}\Lambda^{k}(\Omega) \rightarrow \mathbb R, 
    \quad
    \omega 
    \mapsto 
    \int_{T_{S}} \exterioderivative \Xi_{T_{S},F_{S},S,i} \wedge \omega + (-1)^{n-k+1} \Xi_{T_{S},F_{S},S,i} \wedge \exterioderivative\omega
    .
\end{align*}
We define the Scott-Zhang interpolant
\begin{align}
    \mathcal J_{\mathcal T} : \mathcal W^{p,p}\Lambda^{k}(\Omega) \rightarrow \mathcal P\Lambda^{k}(\mathcal T),
    \quad 
    \omega
    \mapsto 
    \sum_{ S \in \mathcal T } \sum_{ i \in I(S) } 
    K_{S,i}( \omega )
    \phi_{S,i}
    .
\end{align}
This completes the construction of our Scott-Zhang-type interpolant. 
We observe that $\omega \in \mathcal W^{p,p}\Lambda^{k}(\Omega,\Gamma)$ implies $K_{S,i}( \omega ) = 0$ whenever $S \in \mathcal U$.
Therefore we have got a mapping 
\begin{align*}
    \mathcal J_{\mathcal T} : \mathcal W^{p,p}\Lambda^{k}(\Omega,\Gamma) \rightarrow \mathcal P\Lambda^{k}(\mathcal T,\mathcal U).
\end{align*}
Next we discuss an error estimate for this approximation operator. 

\begin{theorem} \label{prop:scottzhangestimate}
    There exists $C_{\mathcal{J},\mathcal U} > 0$, depending only on $n$, $p$, the polynomial degree $r$, and the shape measure of the triangulation,
    such that the following is true:
    for all $T \in \Delta_{n}(\mathcal T)$ we have 
    \begin{align*}
        \| \mathcal J_{\mathcal T}\omega \|_{L^{p}\Lambda^{k}(T)}
        \leq 
        C_{\mathcal{J},\mathcal U}
        \sum_{ \substack{ T' \in \Delta_{n}(\mathcal T) \\ T \cap T' \neq \emptyset } }
        \| \omega \|_{ \mathcal W^{p,p}\Lambda^{k}( T' ) },
        \quad 
        \omega \in \mathcal W^{p,p}\Lambda^{k}( \Omega ),
    \end{align*}
    and for all $T \in \Delta_{n}(\mathcal T)$ we have 
    \begin{align*}
        \| \omega - \mathcal J_{\mathcal T}\omega \|_{L^{p}\Lambda^{k}(T)}
        \leq 
        C_{\mathcal{J},\mathcal U}
        \sum_{ \substack{ T' \in \Delta_{n}(\mathcal T) \\ T \cap T' \neq \emptyset } }
            \| \omega - \Pi_{T'} \omega \|_{L^{p}\Lambda^{k}(T')} 
            + 
            h_{T'} 
            \| \exterioderivative \omega - \exterioderivative \Pi_{T'} \omega \|_{L^{p}\Lambda^{k+1}(T')} 
    \end{align*}
    whenever $\omega \in \mathcal W^{p,p}\Lambda^{k}(\Omega,\Gamma)$.
\end{theorem}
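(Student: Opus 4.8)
The plan is to follow the same two-part structure as the proof of Theorem~\ref{prop:clementerrorestimate:boundary}, using the locality of the basis forms and the biorthogonality relation~\eqref{prop:biorthogonal:biorthogonal}, but now exploiting the fact that the Scott-Zhang functionals $K_{S,i}$ are defined directly on $\mathcal W^{p,p}\Lambda^{k}$ via the integration-by-parts expression, so no patch-wide smoothing projection $P_S$ is needed. First I would establish the stability bound. Fix $T \in \Delta_n(\mathcal T)$ and expand $(\mathcal J_{\mathcal T}\omega)_{|T} = \sum_{S \subseteq T}\sum_{i \in I(S)} K_{S,i}(\omega)\phi_{S,i|T}$, using~\eqref{prop:biorthogonal:locality} to restrict the sum to subsimplices of $T$. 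For $\dim S = n$ the functional $K_{S,i}=\phi^\ast_{S,i}$ is an integral against a fixed smooth form over $S\subseteq T$, so $|K_{S,i}(\omega)| \le C h_S^{-n/p'}\cdot h_S^{k}\|\omega\|_{L^p\Lambda^k(T)}$ by a scaling argument (the exponents match those implicit in~\eqref{prop:biorthogonal:bound}). For $\dim S < n$ I would apply H\"older to the two-term expression defining $K_{S,i}$ together with the scaling bounds~\eqref{prop:extendeddof:scaling} for $\Xi_{S,i}$ and $\exterioderivative\Xi_{S,i}$ in the conjugate exponent $q = p/(p-1)$, obtaining
\begin{align*}
 |K_{S,i}(\omega)|
 \le
 C_\Xi h_S^{\frac{n}{q}-n+k}\|\omega\|_{L^p\Lambda^k(T_S)}
 + C_\Xi h_S^{\frac{n}{q}-n+k+1}\|\exterioderivative\omega\|_{L^p\Lambda^{k+1}(T_S)}.
\end{align*}
Multiplying by $\|\phi_{S,i}\|_{L^p\Lambda^k(T)} \le C_{\mathrm A}h_S^{n/p-k}$ and using $1/p + 1/q = 1$, all powers of $h_S$ cancel, leaving $|K_{S,i}(\omega)|\,\|\phi_{S,i}\|_{L^p\Lambda^k(T)} \le C\|\omega\|_{\mathcal W^{p,p}\Lambda^k(T_S)}$. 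Summing over the finitely many $S\subseteq T$ and $i\in I(S)$, and noting $T_S$ ranges over simplices meeting $T$, gives the first inequality.

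For the error estimate I would exploit that $\mathcal J_{\mathcal T}$ reproduces polynomials locally in the appropriate sense. The key observation is that for each fixed $T\in\Delta_n(\mathcal T)$ and each subsimplex $S\subseteq T$, when $\psi\in\mathcal P_r\Lambda^k(T_S)$ one has $K_{S,i}(\psi) = \phi^\ast_{S,i}(\psi_{|F_S\text{-side}})$ via the remark-identity relating $\Xi_{T_S,F_S,S,i}$ and $\mathring\xi_{F_S,S,i}$ — but one must be careful because $\psi$ need only be polynomial on $T_S$, not globally. The cleaner route, following Veeser, is to write $\omega - \mathcal J_{\mathcal T}\omega = (\omega - \Pi_T\omega) + (\Pi_T\omega - \mathcal J_{\mathcal T}\omega)$ on $T$, and then express $\Pi_T\omega$ in the local basis as in the proof of Theorem~\ref{prop:clementerrorestimate}: $(\Pi_T\omega)_{|T} = \sum_{S\subseteq T, i}\phi^\ast_{S,i}(\Pi_T\omega)\phi_{S,i|T}$. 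Subtracting, the $T$-restriction of $\Pi_T\omega - \mathcal J_{\mathcal T}\omega$ equals $\sum_{S\subseteq T,i}\bigl(\phi^\ast_{S,i}(\Pi_T\omega) - K_{S,i}(\omega)\bigr)\phi_{S,i|T}$. For $\dim S = n$ with $S\subseteq T$ the two coincide (both are $\phi^\ast_{S,i}$ evaluated on forms agreeing on $S=T_S$... here $T_S$ may differ from $T$, so I would instead insert $\phi^\ast_{S,i}(\Pi_{T_S}\omega)$ as an intermediate and bound $\phi^\ast_{S,i}(\Pi_T\omega) - \phi^\ast_{S,i}(\Pi_{T_S}\omega)$ together with $\phi^\ast_{S,i}(\Pi_{T_S}\omega) - K_{S,i}(\omega)$ separately). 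For $\dim S < n$ I would use that $K_{S,i}(\Pi_{T_S}\omega) = \phi^\ast_{S,i}(\Pi_{T_S}\omega)$ (the integration-by-parts identity applied to the polynomial $\Pi_{T_S}\omega$ on $T_S$, which is legitimate since the identity in the Remark holds for $\omega\in C^\infty\Lambda^k(T)$ and extends to polynomials), hence $\phi^\ast_{S,i}(\Pi_{T_S}\omega) - K_{S,i}(\omega) = K_{S,i}(\Pi_{T_S}\omega - \omega)$, and apply the H\"older-plus-scaling estimate from the first part to $\Pi_{T_S}\omega - \omega$ in place of $\omega$, yielding a bound by $\|\omega - \Pi_{T_S}\omega\|_{L^p\Lambda^k(T_S)} + h_{T_S}\|\exterioderivative\omega - \exterioderivative\Pi_{T_S}\omega\|_{L^p\Lambda^{k+1}(T_S)}$ after the $h$-powers cancel against $\|\phi_{S,i}\|_{L^p\Lambda^k(T)}$.

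The remaining task is to absorb the mismatch terms $\phi^\ast_{S,i}(\Pi_T\omega) - \phi^\ast_{S,i}(\Pi_{T_S}\omega)$ and $\|\omega - \Pi_T\omega\|_{L^p\Lambda^k(T)}$ into the right-hand side. The first is $\phi^\ast_{S,i}(\Pi_T\omega - \Pi_{T_S}\omega)$; writing $\Pi_T\omega - \Pi_{T_S}\omega = (\Pi_T\omega - \omega) + (\omega - \Pi_{T_S}\omega)$ and using that $\phi^\ast_{S,i}$ is, for $\dim S = n$, an $L^\infty$-to-$\mathbb R$ functional on a polynomial space, an inverse inequality on $T$ and on $T_S$ converts the $L^\infty$ norms to $L^p$ norms with the correct $h$-scaling, giving a bound by $\|\omega - \Pi_T\omega\|_{L^p\Lambda^k(T)} + \|\omega - \Pi_{T_S}\omega\|_{L^p\Lambda^k(T_S)}$; for $\dim S < n$ the already-treated H\"older estimate applies verbatim with argument $\Pi_T\omega - \Pi_{T_S}\omega$. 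Since $T$ and every relevant $T_S$ are among the $T'\in\Delta_n(\mathcal T)$ with $T\cap T'\neq\emptyset$, and since $\|\omega-\Pi_T\omega\|_{L^p\Lambda^k(T)}$ is of exactly the form appearing on the right, summing over the finitely many subsimplices $S\subseteq T$ (a number controlled by $n$ and $r$) and over the finitely many neighboring $T'$ (controlled by $\mu(\mathcal T)$) produces the stated inequality. I expect the main obstacle to be the bookkeeping around the auxiliary simplex $T_S$: because $\mathring\xi_{S,i}$ and $\Xi_{S,i}$ are attached to a fixed facet $F_S\subseteq T_S$ that need not be a facet of the $T$ currently under consideration, one cannot simply appeal to the identity in the Remark on $T$ itself, and the argument must consistently route evaluations through $T_S$ and then pay for the difference $\Pi_T\omega - \Pi_{T_S}\omega$ using inverse estimates — making sure, in particular, that the partial-boundary-condition hypothesis $\omega\in\mathcal W^{p,p}\Lambda^k(\Omega,\Gamma)$ is only invoked where $S\in\mathcal U$ forces $F_S\in\mathcal U$, exactly as arranged after~\eqref{math:festsetzen}, so that $\mathcal J_{\mathcal T}$ does land in $\mathcal P\Lambda^k(\mathcal T,\mathcal U)$.
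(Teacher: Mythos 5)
Your overall skeleton (peel off $\omega-\Pi_T\omega$, expand $\Pi_T\omega$ in the biorthogonal basis, and compare $\phi^\ast_{S,i}(\Pi_T\omega)$ with $K_{S,i}(\omega)$ term by term, treating the volume degrees of freedom directly) is the same as the paper's, and your first inequality and your identity $\phi^\ast_{S,i}(\Pi_{T_S}\omega)-K_{S,i}(\omega)=K_{S,i}(\Pi_{T_S}\omega-\omega)$ are fine (up to a slip in the exponent for the volume functionals, which should scale like $h_S^{k-n/p}$, not $h_S^{k-n/q}$). The genuine gap is your treatment of the mismatch term $\phi^\ast_{S,i}(\Pi_T\omega)-\phi^\ast_{S,i}(\Pi_{T_S}\omega)$ for $\dim S<n$. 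You claim it can be bounded by $\|\omega-\Pi_T\omega\|_{L^p\Lambda^k(T)}+\|\omega-\Pi_{T_S}\omega\|_{L^p\Lambda^k(T_S)}$ (plus derivative terms) via inverse estimates or by reusing the H\"older estimate with argument $\Pi_T\omega-\Pi_{T_S}\omega$. This cannot work: $T$ and $T_S$ in general meet only in the low-dimensional simplex $S$, and this term depends only on $\omega_{|T}$ and $\omega_{|T_S}$, between which $\omega\in\mathcal W^{p,p}\Lambda^k(\Omega)$ gives no direct link. Concretely, take $\omega$ smooth, equal to a constant form $c_1$ on a neighborhood of $T$ and to a different constant $c_2$ on a neighborhood of $T_S$, joined smoothly through the intermediate simplices of the patch around $S$, with $\phi^\ast_{S,i}(c_1-c_2)\neq 0$. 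Then $\Pi_T\omega=c_1$, $\Pi_{T_S}\omega=c_2$, both local best-approximation errors on $T$ and $T_S$ vanish, yet the mismatch equals $\phi^\ast_{S,i}(c_1-c_2)\neq 0$. So no estimate of the kind you propose can hold; the discrepancy is genuinely controlled only by the approximation errors on the \emph{intermediate} elements, which your chain of inequalities never produces (they appear in your final bound only because you observe post hoc that $T$ and the $T_S$ are among the neighbors).

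The missing idea is the paper's telescoping argument based on face-connectedness: choose a chain $T_0=T_S,T_1,\dots,T_N=T$ of $n$-simplices all containing $S$, consecutive ones sharing a facet $F_j\supseteq S$, and write $\phi^\ast_{S,i}(\Pi_T\omega)-K_{S,i}(\omega)$ as a sum of jumps $\int_{F_j}\mathring\xi_{F_j,S,i}\wedge\bigl(\trace_{T_j,F_j}\Pi_{T_j}\omega-\trace_{T_{j-1},F_j}\Pi_{T_{j-1}}\omega\bigr)$ plus a base term on $T_0$. Each jump is converted by the integration-by-parts identity into volume integrals over $T_j$ and $T_{j-1}$; then the two facet-attached forms $\Xi_{T_j,F_j,S,i}$ and $\Xi_{T_{j-1},F_j,S,i}$ are glued (using $o(F_j,T_{j-1})=-o(F_j,T_j)$) into a form in $\mathcal W^{\infty,\infty}\Lambda^{n-k-1}(\Omega)$ supported in the interior of $T_j\cup T_{j-1}$, so that the distributional definition of $\exterioderivative\omega$ for $\omega\in\mathcal W^{p,p}\Lambda^k(\Omega)$ cancels the $\omega$-contributions exactly; what remains are terms in $\omega-\Pi_{T_j}\omega$ and $\exterioderivative\omega-\exterioderivative\Pi_{T_j}\omega$ on the chain elements, which after the scaling bounds \eqref{prop:extendeddof:scaling} and \eqref{prop:biorthogonal:boundphi} yield precisely the sum over all $T'$ with $T'\cap T\neq\emptyset$. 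It is this weak-continuity cancellation across shared interior facets, not an inverse estimate, that pays for routing the evaluation from $T$ back to $T_S$; your proof needs this step (your use of the boundary-condition hypothesis for $S\in\mathcal U$ and the choice $F_S\in\mathcal U$ is the analogous mechanism at $\Gamma$, and that part you did identify correctly).
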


\begin{proof}
    The first inequality is easily seen, so we focus on the second inequality. 
    Let $T \in \mathcal T$ be any $n$-dimensional simplex. 
    We find that 
    \begin{align*}
        \omega_{|T} - ( \mathcal J_{\mathcal T} \omega )_{|T} 
        &= 
        \omega_{|T} 
        - 
        \Pi_{T}\omega
        +
        \Pi_{T}\omega
        -
        \sum_{ S \subseteq T } \sum_{ i \in I(S) } 
        K_{S,i}( \omega )
        \phi_{S,i|T}
        \\
        &= 
        \omega_{|T} 
        - 
        \Pi_{T}\omega
        +
        \sum_{ S \subseteq T } \sum_{ i \in I(S) } 
        \phi^\ast_{S,i}( \Pi_T\omega )
        \phi_{S,i|T}
        -
        \sum_{ S \subseteq T } \sum_{ i \in I(S) } 
        K_{S,i}( \omega )
        \phi_{S,i|T}
        .
    \end{align*}
    Hence 
    \begin{align*}
        \| \omega - \mathcal J_{\mathcal T} \omega \|_{L^{p}\Lambda^{k}(T)}
        &\leq  
        \| \omega - \Pi_{T}\omega \|_{L^{p}\Lambda^{k}(T)}
        +
        \sum_{ S \subseteq T } \sum_{ i \in I(S) } 
        \left| \phi^\ast_{S,i}( \Pi_T\omega ) - K_{S,i}( \omega ) \right|
        \| \phi_{S,i} \|_{L^{p}\Lambda^{k}(T)}
        \\&\leq  
        \| \omega - \Pi_{T}\omega \|_{L^{p}\Lambda^{k}(T)}
        +
        \sum_{ S \subseteq T } \sum_{ i \in I(S) } 
        \left| \phi^\ast_{S,i}( \Pi_T\omega ) - K_{S,i}( \omega ) \right|
        C_{\mathrm{A}} h_{S}^{\frac{n}{p}-k} 
        .
    \end{align*}
    We study the terms in the second sum in closer detail. 
    The functionals $\phi^\ast_{T,i}$ and $K_{T,i}$ are the same and thus 
    \begin{align*}
        \phi^\ast_{T,i}( \Pi_T\omega ) - K_{T,i}( \omega )
        &=
        \phi^\ast_{T,i}( \Pi_T\omega - \omega_{|T} )
        .
    \end{align*}
    With H\"older's inequality, a scaling argument and Theorem~\ref{prop:biorthogonal} we thus get the upper bound 
    \begin{align*}
        \left| \phi^\ast_{T,i}( \Pi_T\omega ) - K_{T,i}( \omega ) \right|
        \| \phi_{S,i} \|_{L^{p}\Lambda^{k}(T)}
        &\leq 
        C 
        h_{T}^{ \frac{n(p-1)}{p} - n+k } 
        h_{T}^{ \frac{n}{p} - k } 
        \| \Pi_{T}\omega - \omega \|_{L^{p}\Lambda^{k}(T)}
        .
    \end{align*}
    We dedicate our attention to the degrees of freedom that are associated to proper subsimplices $S$ of $T$. 
    Here, the functionals $\phi^\ast_{S,i}$ and $K_{S,i}$ generally differ. 
    We recall that for any $F \in \mathcal F(\mathcal T)$ with $S \subseteq F$ we have 
    \begin{align*}
        \phi^\ast_{S,i}( \Pi_T\omega ) 
        = 
        \int_{F} \mathring\xi_{F,S,i} \wedge \trace_{T,F} \Pi_{T}\omega 
        = 
        \int_{T} \exterioderivative\Xi_{T,F,S,i} \wedge \Pi_{T}\omega + (-1)^{n-k+1}\Xi_{T,F,S,i} \wedge \exterioderivative \Pi_{T}\omega
        .
    \end{align*}
    On the other hand, 
    \begin{align*}
        K_{S,i}( \omega ) 
        &= 
        \int_{T_{S}} \exterioderivative\Xi_{T_{S},F_{S},S,i} \wedge \omega_{T} + (-1)^{n-k+1}\Xi_{T_{S},F_{S},S,i} \wedge \exterioderivative \omega_{T}
        .
    \end{align*}
    The simplicial complex $\mathcal T$ is face-connected since it triangulates a domain. 
    Therefore there exists a sequence $T_0, T_1, \dots, T_N$ of $n$-dimensional simplices of $\mathcal T$
    without repetitions 
    such that $T_0 = T_{S}$ and $T_N = T$ and such that 
    for all $1 \leq j \leq N$ there exist facets $F_{j} := T_{j} \cap T_{j-1}$
    for which $S \subseteq F_{j}$. 
    Write $F_{0} := F_{S}$ and $F_{N+1} := F$. 
    We utilize the technique of telescope sum and find 
    \begin{align*}
        &
        \phi^\ast_{S,i}( \Pi_{T}\omega )
        -
        K_{S,i}( \omega ) 
        \\&= 
        \int_{F} \mathring\xi_{F,S,i} \wedge \trace_{T,F} \Pi_{T} \omega 
        -
        K_{S,i}( \omega ) 
        \\&= 
        \int_{F_{N+1}} \mathring\xi_{F_{N+1},S,i} \wedge \trace_{T_{N},F_{N+1}} \Pi_{T_{N}} \omega 
        +
        \sum_{j=0}^{N} \phi^\ast_{S,i}( \Pi_{T_{j}}\omega ) - \phi^\ast_{S,i}( \Pi_{T_{j}}\omega )  
        -
        K_{S,i}( \omega ) 
        \\&= 
        \int_{F_{N+1}} \mathring\xi_{F_{N+1},S,i} \wedge \trace_{T_{N},F_{N+1}} \Pi_{T_{N}} \omega 
        -
        \sum_{j=0}^{N} \int_{F_{j+1}} \mathring\xi_{F_{j+1},S,i} \wedge \trace_{T_j,F_{j+1}} \Pi_{T_j} \omega 
        \\&\qquad\qquad\qquad\qquad\qquad\qquad\quad
        +
        \sum_{j=0}^{N} \int_{F_{j  }} \mathring\xi_{F_{j  },S,i} \wedge \trace_{T_j,F_{j  }} \Pi_{T_j} \omega 
        -
        K_{S,i}( \omega ) 
        \\&= 
        \sum_{j=1}^{N} 
        \int_{F_{j}} \mathring\xi_{F_{j},S,i} \wedge \trace_{T_{j  },F_{j  }} \Pi_{T_{j  }} \omega 
        -
        \int_{F_{j}} \mathring\xi_{F_{j},S,i} \wedge \trace_{T_{j-1},F_{j  }} \Pi_{T_{j-1}} \omega 
        \\&\qquad\qquad\qquad\qquad\qquad\qquad\qquad+
        \int_{F_{0}} \mathring\xi_{F_{0},S,i} \wedge \trace_{T_{0},F_{0  }} \Pi_{T_{0}} \omega 
        -
        K_{S,i}( \omega ) 
    \end{align*}
    From the definition of $K_{S,j}$ we get 
    \begin{align*}
        K_{S,i}( \omega ) 
        = 
        \int_{T_{0}} \exterioderivative \Xi_{T_{0},F_{0},S,i} \wedge \omega + (-1)^{n-k+1} \Xi_{T_{0},F_{0},S,i} \wedge \exterioderivative\omega
        .
    \end{align*}
    Aside from that, we know 
    \begin{align*}
        \int_{F_{0}} \mathring\xi_{F_{0},S,i} \wedge \trace_{T_{0},F_{0  }} \Pi_{T_{0}} \omega 
        = 
        \int_{T_{0}} \exterioderivative \Xi_{T_{0},F_{0},S,i} \wedge \Pi_{T_{0}} \omega + (-1)^{n-k+1} \Xi_{T_{0},F_{0},S,i} \wedge \exterioderivative \Pi_{T_{0}} \omega
        .  
    \end{align*}
    Thus it becomes apparent that 
    \begin{align*}
        &
        \int_{F_{0}} \mathring\xi_{F_{0},S,i} \wedge \trace_{T_{0},F_{0  }} \Pi_{T_{0}} \omega 
        -
        K_{S,i}( \omega ) 
        \\&\quad 
        =
        \int_{T_{0}} \exterioderivative \Xi_{T_{0},F_{0},S,i} \wedge \left( \Pi_{T_{0}} \omega - \omega \right) 
        + 
        (-1)^{n-k+1} 
        \Xi_{T_{0},F_{0},S,i} \wedge \exterioderivative \left( \Pi_{T_{0}} \omega - \omega \right)
        .
    \end{align*}
    Therefore, writing $q := p/(p-1)$, 
    \begin{align*}
        &\quad 
        |
        \phi^\ast_{S,i}( \Pi_{T_{0}}\omega )
        -
        K_{S,i}( \omega ) 
        |
        \\&\leq 
        \| \exterioderivative \Xi_{T_{0},F_{0},S,i} \|_{L^{q}\Lambda^{n-k}(T_{0})}
        \| \omega - \Pi_{T_{0}} \omega \|_{L^{p}\Lambda^{k}(T_{0})}
        + 
        \| \Xi_{T_{0},F_{0},S,i} \|_{L^{q}\Lambda^{n-k-1}(T_{0})}
        \| \exterioderivative\omega - \exterioderivative \Pi_{T_{0}} \omega \|_{L^{p}\Lambda^{k+1}(T_{0})}
        \\&\leq 
        C_{\Xi} h_{S}^{ \frac{n(p-1)}{p} - n + k}
        \| \omega - \Pi_{T_{0}} \omega \|_{L^{p}\Lambda^{k}(T_{0})}
        + 
        C_{\Xi} h_{S}^{ \frac{n(p-1)}{p} - n + k + 1}
        \| \exterioderivative\omega - \exterioderivative \Pi_{T_{0}} \omega \|_{L^{p}\Lambda^{k+1}(T_{0})}
        .
    \end{align*}
    Now consider any $1 \leq j \leq N$. 
    By the equivalence of the boundary integrals with an integration by parts formula we find 
    \begin{align*}
        &
        \int_{F_{j}} \mathring\xi_{F_{j},S,i} \wedge \trace_{T_{j  },F_{j  }} \Pi_{T_{j  }} \omega 
        -
        \int_{F_{j}} \mathring\xi_{F_{j},S,i} \wedge \trace_{T_{j-1},F_{j  }} \Pi_{T_{j-1}} \omega 
        \\
        &
        \quad=
        o(F_{j},T_{j})
        \int_{T_{j}} 
        \exterioderivative \Xi_{T_{j},F_{j},S,i} \wedge \Pi_{T_{j}} \omega 
        + 
        (-1)^{n-k+1} 
        \Xi_{T_{j},F_{j},S,i} \wedge \exterioderivative \Pi_{T_{j}} \omega 
        \\&\quad\quad-
        o(F_{j},T_{j-1})
        \int_{T_{j-1}} 
        \exterioderivative \Xi_{T_{j-1},F_{j},S,i} \wedge \Pi_{T_{j-1}} \omega 
        + 
        (-1)^{n-k+1} 
        \Xi_{T_{j-1},F_{j},S,i} \wedge \exterioderivative \Pi_{T_{j-1}} \omega 
        .
    \end{align*}
    Let $\Xi_{F_{j},S,i} \in L^{\infty}\Lambda^{n-k-1}(\Omega)$ 
    with $\Xi_{F_{j},S,i|T_{j}} = \Xi_{T_{j},F_{j},S,i}$ and $\Xi_{F_{j},S,i|T_{j-1}} = \Xi_{T_{j-1},F_{j},S,i}$
    and vanishing on all other $n$-simplices of $\mathcal T$. 
    One sees that $\Xi_{F_{j},S,i} \in \mathcal W^{\infty,\infty}\Lambda^{n-k-1}(\Omega)$ with support in the interior of $T_{j} \cup T_{j-1}$. 
    So an integration by parts reveals that 
    \begin{align*}
        \int_{T_{j} \cup T_{j-1}} \exterioderivative\Xi_{F_{j},S,i} \wedge \omega + (-1)^{n-k+1} \Xi_{F_{j},S,i} \wedge \exterioderivative\omega
        = 0
        .
    \end{align*}
    Consequently 
    \begin{align*}
        &
        \int_{T_{j  }} \exterioderivative\Xi_{T_{j  },F_{j},S,i} \wedge \omega + (-1)^{n-k+1} \Xi_{T_{j  },F_{j},S,i} \wedge \exterioderivative\omega
        \\&\qquad\qquad 
        +
        \int_{T_{j-1}} \exterioderivative\Xi_{T_{j-1},F_{j},S,i} \wedge \omega + (-1)^{n-k+1} \Xi_{T_{j-1},F_{j},S,i} \wedge \exterioderivative\omega
        = 0
        .
    \end{align*}
    Moreover, $o(F_{j},T_{j-1}) = - o(F_{j},T_{j})$,
    because the two $n$-simplices induce opposing orientations on $F$. 
    One derives
    \begin{align*}
        &
        o(F_{j},T_{j})
        \int_{T_{j  }} \exterioderivative\Xi_{T_{j  },F_{j},S,i} \wedge \omega + (-1)^{n-k+1} \Xi_{T_{j  },F_{j},S,i} \wedge \exterioderivative\omega
        \\&\qquad 
        -
        o(F_{j},T_{j-1})
        \int_{T_{j-1}} \exterioderivative\Xi_{T_{j-1},F_{j},S,i} \wedge \omega + (-1)^{n-k+1} \Xi_{T_{j-1},F_{j},S,i} \wedge \exterioderivative\omega
        = 0
        .
    \end{align*}
    We combine our calculations and obtain 
    \begin{align*}
        &
        \int_{F_{j}} \mathring\xi_{F_{j},S,i} \wedge \trace_{T_{j  },F_{j  }} \Pi_{T_{j  }} \omega 
        -
        \int_{F_{j}} \mathring\xi_{F_{j},S,i} \wedge \trace_{T_{j-1},F_{j  }} \Pi_{T_{j-1}} \omega 
        \\&\quad =
        o(F_{j},T_{j})
        \int_{T_{j}} 
        \exterioderivative \Xi_{T_{j},F_{j},S,i} \wedge \left( \Pi_{T_{j}} \omega - \omega \right) 
        + 
        (-1)^{n-k+1} 
        \Xi_{T_{j},F_{j},S,i} \wedge \exterioderivative \left( \Pi_{T_{j}} \omega - \omega \right)
        \\&\quad \quad 
        -
        o(F_{j},T_{j-1})
        \int_{T_{j-1}} 
        \exterioderivative \Xi_{T_{j-1},F_{j},S,i} \wedge \left( \Pi_{T_{j-1}} \omega - \omega \right) 
        + 
        (-1)^{n-k+1} 
        \Xi_{T_{j-1},F_{j},S,i} \wedge \exterioderivative \left( \Pi_{T_{j-1}} \omega - \omega \right)
        .
    \end{align*}
    We use H\"older's inequality again and can summarize
    \begin{align*}
        &
        \left|\int_{F_{j}} \mathring\xi_{F_{j},S,i} \wedge \trace_{T_{j  },F_{j  }} \Pi_{T_{j  }} \omega 
        -
        \int_{F_{j}} \mathring\xi_{F_{j},S,i} \wedge \trace_{T_{j-1},F_{j  }} \Pi_{T_{j-1}} \omega \right|
        \\
        &\qquad\leq
        \| \exterioderivative \Xi_{T_{j},F_{j},S,i} \|_{L^{q}\Lambda^{n-k}(T_{j})}  
        \| \omega - \Pi_{T_{j}} \omega \|_{L^{p}\Lambda^{k}(T_{j})} 
        \\&\qquad\qquad
        + 
        \| \Xi_{T_{j},F_{j},S,i} \|_{L^{q}\Lambda^{n-k-1}(T_{j})}  
        \| \exterioderivative \omega - \exterioderivative \Pi_{T_{j}} \omega \|_{L^{p}\Lambda^{k+1}(T_{j})}  
        \\&\qquad\qquad
        +
        \| \exterioderivative \Xi_{T_{j-1},F_{j},S,i} \|_{L^{q}\Lambda^{n-k}(T_{j-1})}  
        \| \omega - \Pi_{T_{j-1}} \omega \|_{L^{p}\Lambda^{k}(T_{j-1})}  
        \\&\qquad\qquad
        + 
        \| \Xi_{T_{j-1},F_{j},S,i} \|_{L^{q}\Lambda^{n-k-1}(T_{j-1})}  
        \| \exterioderivative \omega - \exterioderivative \Pi_{T_{j-1}} \omega \|_{L^{p}\Lambda^{k+1}(T_{j-1})}  
        \\
        &\qquad\leq
        C_{\Xi} \bigg( 
            h_{S}^{ \frac{n}{q} -n+k }
            \| \omega - \Pi_{T_{j}} \omega \|_{L^{p}\Lambda^{k}(T_{j})} 
            + 
            h_{S}^{ \frac{n}{q} -n+k+1 }
            \| \exterioderivative \omega - \exterioderivative \Pi_{T_{j}} \omega \|_{L^{p}\Lambda^{k+1}(T_{j})}  
            \\&\qquad\qquad\qquad\qquad
            +
            h_{S}^{ \frac{n}{q} -n+k }
            \| \omega - \Pi_{T_{j-1}} \omega \|_{L^{p}\Lambda^{k}(T_{j-1})}  
            + 
            h_{S}^{ \frac{n}{q} -n+k+1 }
            \| \exterioderivative \omega - \exterioderivative \Pi_{T_{j-1}} \omega \|_{L^{p}\Lambda^{k+1}(T_{j-1})}  
        \bigg)
        .
    \end{align*}
    All estimates are in place and we recall that 
    \begin{align*}
        \| \phi_{S,i} \|_{L^{p}\Lambda^{k}(T)} \leq C_{\rm{A}} h_{S}^{ \frac{n}{p} - k  }.
    \end{align*}
    The desired estimate now follows. 
\end{proof}

\begin{corollary}
  Let
  $m \in [0,r+1]$ if $\mathcal P\Lambda^{k}(\mathcal T) = \mathcal P_{r}^{ }\Lambda^{k}(\mathcal T)$
  and let 
  $m \in [0,r  ]$ otherwise. 
  Write $l := \max(0,m-1)$. 
  Then for all $T \in \Delta_{n}(\mathcal T)$
  we have 
  \begin{align*}
    \| \omega - \mathcal J_{\mathcal T}\omega \|_{L^{p}\Lambda^{k}(T)}
    &\leq 
    C_{\mathcal{J},\mathcal U,0}
    \sum_{ \substack{ T' \in \Delta_{n}(\mathcal T) \\ T' \cap T \neq \emptyset } }
    \left( 
        h_{T'}^{m} 
        | \omega |_{W^{m,p}\Lambda^{k  }(T')}
        +
        h_{T'}^{l+1} 
        | \exterioderivative\omega |_{W^{l,p}\Lambda^{k+1}(T')}
    \right)
  \end{align*}
  whenever 
  \begin{align*}
    \omega \in \mathcal W^{p,p}\Lambda^{k}(\Omega) \cap \bigoplus_{ T \in \Delta_{n}(\mathcal T) } W^{m,p}\Lambda^{k}(T)
    .
  \end{align*}
  Here, $C_{\mathcal{J},\mathcal U,0} > 0$ depends only on $n$, $p$, the polynomial degree $r$, and the shape measure of the triangulation.
\end{corollary}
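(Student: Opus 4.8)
The plan is to feed the polynomial approximation properties \eqref{math:bestapproximation} and \eqref{math:bestapproximation:exteriorderivative} of the projections $\Pi_{T'}$ into the local bound of Theorem~\ref{prop:scottzhangestimate} and then to sum over the finitely many full-dimensional simplices meeting $T$. Concretely, Theorem~\ref{prop:scottzhangestimate} reduces the claim to showing, for each $T' \in \Delta_{n}(\mathcal T)$ with $T' \cap T \neq \emptyset$, the two simplex-local estimates $\| \omega - \Pi_{T'}\omega \|_{L^{p}\Lambda^{k}(T')} \le C h_{T'}^{m} | \omega |_{W^{m,p}\Lambda^{k}(T')}$ and $\| \exterioderivative\omega - \exterioderivative\Pi_{T'}\omega \|_{L^{p}\Lambda^{k+1}(T')} \le C h_{T'}^{l} | \exterioderivative\omega |_{W^{l,p}\Lambda^{k+1}(T')}$, with $l = \max(0,m-1)$; inserting these turns the $T'$-summand in Theorem~\ref{prop:scottzhangestimate} into $C\big( h_{T'}^{m} | \omega |_{W^{m,p}\Lambda^{k}(T')} + h_{T'}^{l+1} | \exterioderivative\omega |_{W^{l,p}\Lambda^{k+1}(T')} \big)$, which is exactly what we want. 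Note that $\omega \in \mathcal W^{p,p}\Lambda^{k}(\Omega)$ with piecewise $W^{m,p}$ regularity forces $\exterioderivative\omega \in W^{l,p}\Lambda^{k+1}(T')$ on every cell $T'$, so both seminorms on the right are finite.

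First I would treat the $L^{p}$ term. By \eqref{math:bestapproximation}, $\| \omega - \Pi_{T'}\omega \|_{L^{p}\Lambda^{k}(T')} \le C_{\Pi} \inf_{\psi \in \mathcal P\Lambda^{k}(T')} \| \omega - \psi \|_{L^{p}\Lambda^{k}(T')}$, and the stated range of $m$ is precisely what guarantees $\mathcal P_{m-1}\Lambda^{k}(T') \subseteq \mathcal P\Lambda^{k}(T')$: for the $\mathcal P_{r}\Lambda^{k}$ family this needs $m-1 \le r$, and for the $\mathcal P_{r}^{-}\Lambda^{k}$ family it needs $m-1 \le r-1$ since $\mathcal P_{r-1}\Lambda^{k} \subseteq \mathcal P_{r}^{-}\Lambda^{k}$. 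Hence a componentwise Bramble-Hilbert / Dupont-Scott estimate on the simplex (applied coefficientwise after pullback to a reference simplex, with the usual scaling in $h_{T'}$ controlled through $\mu(\mathcal T)$) gives $\inf_{\psi}\| \omega - \psi \|_{L^{p}\Lambda^{k}(T')} \le C h_{T'}^{m} | \omega |_{W^{m,p}\Lambda^{k}(T')}$. For the exterior-derivative term I would use \eqref{math:bestapproximation:exteriorderivative} together with the fact that $\Pi_{T'}$ is built from a commuting smoothed projection on the reference simplex; equivalently, that $\{\, \exterioderivative\psi \suchthat \psi \in \mathcal P\Lambda^{k}(T') \,\}$ equals the space of closed — hence, on a simplex, exact — polynomial $(k+1)$-forms of degree at most $r-1$. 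Since $\exterioderivative\omega$ is closed and lies in $W^{l,p}\Lambda^{k+1}(T')$, and since the degree bookkeeping $l-1 \le r-1$ holds in either family (it follows from the given range of $m$), one may approximate the closed form $\exterioderivative\omega$ within that space, and a Bramble-Hilbert estimate for closed differential forms — the exactness of the polynomial de Rham complex over a simplex, a bounded potential operator, and the affine scaling — yields $\| \exterioderivative\omega - \exterioderivative\Pi_{T'}\omega \|_{L^{p}\Lambda^{k+1}(T')} \le C h_{T'}^{l} | \exterioderivative\omega |_{W^{l,p}\Lambda^{k+1}(T')}$.

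Finally I would assemble the pieces: substituting these two estimates into Theorem~\ref{prop:scottzhangestimate} and using that the number of $T' \in \Delta_{n}(\mathcal T)$ with $T' \cap T \neq \emptyset$ is bounded in terms of $\mu(\mathcal T)$, together with the local comparability of the diameters $h_{T'}$, produces the stated inequality with a constant $C_{\mathcal{J},\mathcal U,0}$ built from $C_{\mathcal{J},\mathcal U}$, $C_{\Pi}$, the Bramble-Hilbert constants, and that combinatorial bound, all of which depend only on $n$, $p$, $r$, and $\mu(\mathcal T)$. One bookkeeping remark: the second estimate of Theorem~\ref{prop:scottzhangestimate} presumes $\omega$ satisfies partial boundary conditions along $\Gamma$, so one invokes it with $\Gamma$ as given (with no extra restriction when $\mathcal U = \emptyset$). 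The only step that is not entirely routine is obtaining the exterior-derivative contribution with the sharp seminorm $| \exterioderivative\omega |_{W^{l,p}\Lambda^{k+1}(T')}$ rather than the coarser $| \omega |_{W^{m,p}\Lambda^{k}(T')}$: this is exactly where one must exploit that $\Pi_{T'}$ commutes with $\exterioderivative$, so that \eqref{math:bestapproximation:exteriorderivative} is a genuine best-approximation statement for $\exterioderivative\omega$ among all polynomial $(k+1)$-forms, combined with a Bramble-Hilbert lemma for closed forms; everything else is standard polynomial approximation theory and local finiteness.
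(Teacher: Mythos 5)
Your proposal is correct and follows essentially the same route as the paper: invoke Theorem~\ref{prop:scottzhangestimate}, bound the two local best-approximation terms for $\Pi_{T'}$ by Bramble--Hilbert-type estimates of order $h_{T'}^{m}$ and $h_{T'}^{l}$, and sum over the boundedly many $T'$ meeting $T$. The only cosmetic difference is that where the paper simply reuses the commuting Dupont--Scott property \eqref{math:bramblehilbert:exteriorderivative} to bound $\inf_{\psi}\|\exterioderivative\omega-\exterioderivative\psi\|_{L^{p}\Lambda^{k+1}(T')}$, you rederive that bound via exactness of the polynomial de~Rham complex on a simplex (approximating the closed form $\exterioderivative\omega$ by closed, hence exact, polynomial forms), which amounts to the same estimate.
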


\begin{proof}
 We observe $\exterioderivative \omega_{|T} \in W^{l,p}\Lambda^{k+1}(T)$ 
 for $\omega \in \mathcal W^{p,p}\Lambda^{k}(\Omega)$ and $\omega_{|T} \in W^{m,p}\Lambda^{k}(T)$ with $T \in \Delta_{n}(\mathcal T)$. 
 The results follows by combining Theorem~\ref{prop:scottzhangestimate}, 
 Inequalities \eqref{math:bramblehilbert}--\eqref{math:bestapproximation:exteriorderivative},
 and standard estimates as in previous corollaries. 
\end{proof}

\begin{remark}
 The original Scott-Zhang interpolant was only defined for scalar functions in the Sobolev spaces $W^{s,p}(\Omega)$
 for $p > 1$ and $s > \frac 1 p$. Under those conditions on the parameters $s$ and $p$, traces onto facets are well-defined.
 With regards to scalar functions, 
 we instead constrain ourselves to the case $W^{s,p}(\Omega)$ with $s \geq 1$, 
 as we approach boundary traces only indirectly via an integration by parts formula. 
 That approach generalizes naturally to differential forms. 
 We do not attempt to find an analogue of the low regularity setting on differential forms. 
\end{remark}

\section{Applications} \label{sec:applications}

For the purpose of illustration, we review the results in this article in the setting of three-dimensional vector analysis. This last section centers on applications of the Scott-Zhang interpolant and $L^{2}$ theory. 
Let $\Omega \subseteq \mathbb R^{3}$ be a Lipschitz domain triangulated by a triangulation $\mathcal T$. 
Let $\Gamma \subseteq \partial\Omega$ be a two-dimensional submanifold of the boundary 
triangulated by a subtriangulation $\mathcal U \subset \mathcal T$. 

We let $\mathbf L^{2}(\Omega)$ be the space of square-integrable vector fields over $\Omega$,
and we let $\mathbf H^{m}(\Omega)$ be the space of vector fields with coefficients in $W^{m,2}(\Omega)$. 
We write  
\begin{align*}
    \mathbf H(\curl)
    :=
    \left\{ \mathbf{u} \in \mathbf L^{2}(\Omega) \suchthat \curl \mathbf{u} \in \mathbf L^{2}(\Omega) \right\}
    ,
    \quad 
    \mathbf H(\divergence)
    :=
    \left\{ \mathbf{u} \in \mathbf L^{2}(\Omega) \suchthat \divergence \mathbf{u} \in L^{2}(\Omega) \right\}
    .
\end{align*}
We let $\mathbf{Ned}^{\mathrm{fst}}_{r}(\mathcal T)$ and $\mathbf{Ned}^{\mathrm{snd}}_{r}(\mathcal T)$
be the curl-conforming N\'ed\'elec spaces of first and second kind, respectively,
and $\mathbf B\mathbf D\mathbf M_{r}(\mathcal T)$ and $\mathbf R\mathbf T_{r}(\mathcal T)$
be the divergence-conforming Brezzi-Douglas-Marini space and the Raviart-Thomas space, respectively,
of degree $r$ over $\mathcal T$. These finite element spaces contain the polynomial vector fields up to degree $r$. 

We introduce spaces with boundary conditions along $\Gamma$. 
We write $\mathbf{u} \in \mathbf H(\curl,\Gamma)$ if $\mathbf{u} \in \mathbf H(\curl)$ satisfies 
\begin{align*}
    \int_{\Omega} \langle \curl \mathbf{u}, \phi \rangle = \int_{\Omega} \langle \mathbf{u}, \curl \phi \rangle
\end{align*}
for all vector fields $\phi \in C^{\infty}(\overline\Omega)^{3}$ vanishing near $\partial\Omega \setminus \Gamma$.
Similarly,
we write $\mathbf{u} \in \mathbf H(\divergence,\Gamma)$ if $\mathbf{u} \in \mathbf H(\divergence)$ satisfies 
\begin{align*}
    \int_{\Omega} \langle \divergence \mathbf{u}, \phi \rangle = - \int_{\Omega} \langle \mathbf{u}, \grad \phi \rangle
\end{align*}
for all functions $\phi \in C^{\infty}(\overline\Omega)$ vanishing near $\partial\Omega \setminus \Gamma$.
We set 
\begin{gather*}
    \mathbf{Ned}^{\mathrm{fst}}_{r}(\mathcal T,\mathcal U) := \mathbf H(\curl,\Gamma) \cap \mathbf{Ned}^{\mathrm{fst}}_{r}(\mathcal T),
    \quad 
    \mathbf{Ned}^{\mathrm{snd}}_{r}(\mathcal T,\mathcal U) := \mathbf H(\curl,\Gamma) \cap \mathbf{Ned}^{\mathrm{snd}}_{r}(\mathcal T),
    \\
    \mathbf B\mathbf D\mathbf M_{r}(\mathcal T,\mathcal U) := \mathbf H(\divergence,\Gamma) \cap \mathbf B\mathbf D\mathbf M_{r}(\mathcal T),
    \quad 
    \mathbf R\mathbf T_{r}(\mathcal T,\mathcal U) := \mathbf H(\divergence,\Gamma) \cap \mathbf R\mathbf T_{r}(\mathcal T).
\end{gather*}
These are the finite element spaces with boundary conditions along $\Gamma$.
We can equally define them by setting the degrees of freedom associated 
to simplices in $\mathcal U$ to zero. 

The results in this article include the following theorems as a special case.

\begin{theorem} \label{prop:application:divergence:BDM}
    There exist linear mappings 
    \begin{gather*}
        \mathcal J_{\mathbf B\mathbf D\mathbf M_{r}(\mathcal T,\mathcal U)} : \mathbf H(\divergence,\Gamma) \rightarrow \mathbf B\mathbf D\mathbf M_{r}(\mathcal T,\mathcal U),
    \end{gather*}
    such that for $m \in [0,r+1]$, $l \in [0,r]$, 
    all tetrahedra $T \in \mathcal T$,
    and all $\mathbf{u} \in \mathbf H(\divergence,\Gamma)$ we have 
    \begin{gather*}
        \| \mathbf{u} - \mathcal J_{\mathbf B\mathbf D\mathbf M_{r}(\mathcal T,\mathcal U)} \mathbf{u} \|_{\mathbf L^{2}(T)}
        \leq 
        C        
        \sum_{ \substack{ T' \in \mathcal T \\ \dim(T') = 3 \\ T' \cap T \neq \emptyset } }
        h^{m}_{T}\| \mathbf{u} \|_{\mathbf H^{m}(T')} + h^{l+1}_{T}\| \divergence \mathbf{u} \|_{W^{l,2}(T')}
    \end{gather*}
    whenever the right-hand side is well-defined. 
    Here, the constant $C > 0$ depends only on the polynomial degree $r$ and the shape measure of $\mathcal T$.
\end{theorem}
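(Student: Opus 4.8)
The plan is to deduce Theorem~\ref{prop:application:divergence:BDM} from the Scott-Zhang estimate of Section~\ref{sec:scottzhang} via the standard vector-calculus proxy, taking $n=3$, $p=2$, and $k=n-1=2$. First I would set up that dictionary. A differential $(n-1)$-form $\omega=u_{1}\,\exterioderivative x_{2}\wedge\exterioderivative x_{3}+u_{2}\,\exterioderivative x_{3}\wedge\exterioderivative x_{1}+u_{3}\,\exterioderivative x_{1}\wedge\exterioderivative x_{2}$ corresponds to the vector field $\mathbf u=(u_{1},u_{2},u_{3})$, and then $\exterioderivative\omega=(\divergence\mathbf u)\,\exterioderivative x_{1}\wedge\exterioderivative x_{2}\wedge\exterioderivative x_{3}$, so $\mathcal W^{2,2}\Lambda^{n-1}(\Omega)\cong\mathbf H(\divergence)$ and $\mathcal P_{r}\Lambda^{n-1}(\mathcal T)\cong\mathbf B\mathbf D\mathbf M_{r}(\mathcal T)$ (the trace of an $(n-1)$-form onto a facet being the normal component of $\mathbf u$, the local spaces $\mathring{\mathcal P}_{r}\Lambda^{n-1}(S)$ and the functionals $\mathcal C_{r}\Lambda^{n-1}(S)$ reproducing precisely the interior and normal-face moments of $\mathbf B\mathbf D\mathbf M_{r}$); since $k=n-1$ is neither of the two cases $k=0$, $k=n$ singled out in Section~\ref{sec:biorthogonal}, we are free to take the full-polynomial family, which is what makes the range $m\in[0,r+1]$ available below. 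Taking $\eta\in C_{c}^{\infty}\Lambda^{0}$, the defining identity \eqref{math:partialboundaryconditions} becomes $\int\langle\mathbf u,\grad\eta\rangle=-\int\eta\,\divergence\mathbf u$ on a neighbourhood of each point of $\Gamma$; as $\Gamma$ is relatively open in $\partial\Omega$, a partition-of-unity argument identifies this with the global membership condition defining $\mathbf H(\divergence,\Gamma)$ in Section~\ref{sec:applications}, so $\mathcal W^{2,2}\Lambda^{n-1}(\Omega,\Gamma)\cong\mathbf H(\divergence,\Gamma)$ and, by zeroing the degrees of freedom on $\mathcal U$, $\mathcal P_{r}\Lambda^{n-1}(\mathcal T,\mathcal U)\cong\mathbf B\mathbf D\mathbf M_{r}(\mathcal T,\mathcal U)$.

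Next I would simply set $\mathcal J_{\mathbf B\mathbf D\mathbf M_{r}(\mathcal T,\mathcal U)}$ equal to the Scott-Zhang interpolant $\mathcal J_{\mathcal T}$ of Section~\ref{sec:scottzhang}, transported through the proxy. It is linear, and it lands in $\mathbf B\mathbf D\mathbf M_{r}(\mathcal T,\mathcal U)$ because $K_{S,i}(\omega)=0$ for $S\in\mathcal U$ whenever $\omega\in\mathcal W^{2,2}\Lambda^{n-1}(\Omega,\Gamma)$, as noted after the definition of $\mathcal J_{\mathcal T}$. Since $\mathcal T$ triangulates a Lipschitz domain it is face-connected, so Theorem~\ref{prop:scottzhangestimate} applies with $p=2$, $k=n-1$ and yields, for every tetrahedron $T$,
\[
 \| \omega - \mathcal J_{\mathcal T}\omega \|_{L^{2}\Lambda^{n-1}(T)}
 \le C_{\mathcal J,\mathcal U}\!\!\sum_{\substack{ T'\in\Delta_{3}(\mathcal T)\\ T\cap T'\neq\emptyset }}\!\!
 \| \omega - \Pi_{T'}\omega \|_{L^{2}\Lambda^{n-1}(T')}
 + h_{T'}\,\| \exterioderivative\omega - \exterioderivative\Pi_{T'}\omega \|_{L^{2}\Lambda^{n}(T')} .
\]

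To convert the right-hand side I would combine the best-approximation bounds \eqref{math:bestapproximation}--\eqref{math:bestapproximation:exteriorderivative} for $\Pi_{T'}$ with polynomial approximation on a simplex, exactly as in the corollary following Theorem~\ref{prop:scottzhangestimate}, but keeping the two smoothness orders independent: $\inf_{\psi\in\mathcal P\Lambda^{n-1}(T')}\|\omega-\psi\|_{L^{2}(T')}\le C\,h_{T'}^{m}\,|\omega|_{W^{m,2}\Lambda^{n-1}(T')}$ for $m\in[0,r+1]$, while, because $\exterioderivative\mathcal P_{r}\Lambda^{n-1}(T')=\mathcal P_{r-1}\Lambda^{n}(T')$ (every top-degree form being closed), $\inf_{\psi}\|\exterioderivative\omega-\exterioderivative\psi\|_{L^{2}(T')}\le C\,h_{T'}^{l}\,|\exterioderivative\omega|_{W^{l,2}\Lambda^{n}(T')}$ for $l\in[0,r]$. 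Translating back through the proxy gives $|\omega|_{W^{m,2}\Lambda^{n-1}(T')}\le\|\mathbf u\|_{\mathbf H^{m}(T')}$ and $|\exterioderivative\omega|_{W^{l,2}\Lambda^{n}(T')}=|\divergence\mathbf u|_{W^{l,2}(T')}\le\|\divergence\mathbf u\|_{W^{l,2}(T')}$; finally, for the finitely many neighbours $T'$ of a given $T$ both their number and the ratios $h_{T'}/h_{T}$ are controlled by $\mu(\mathcal T)$, so $h_{T'}$ may be replaced by $h_{T}$ throughout. (The phrase ``whenever the right-hand side is well-defined'' just encodes that $\mathbf u_{|T'}\in\mathbf H^{m}(T')$ and $\divergence\mathbf u_{|T'}\in W^{l,2}(T')$, since $\mathbf u\in\mathbf H(\divergence,\Gamma)$ is already assumed.) Collecting everything produces the stated estimate with $C$ depending only on $r$ and $\mu(\mathcal T)$.

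The genuinely non-mechanical step is the proxy dictionary for the boundary conditions, i.e.\ verifying that \eqref{math:partialboundaryconditions} at $k=n-1$ reproduces the condition used for $\mathbf H(\divergence,\Gamma)$ in Section~\ref{sec:applications}; this hinges on reconciling the local formulation (a ball around each $x\in\Gamma$) with the global one (test fields vanishing near $\partial\Omega\setminus\Gamma$), which is the partition-of-unity argument mentioned above and uses that $\Gamma$ is relatively open. Everything else — the identification $\mathcal P_{r}\Lambda^{n-1}(\mathcal T,\mathcal U)\cong\mathbf B\mathbf D\mathbf M_{r}(\mathcal T,\mathcal U)$ together with the trace-versus-normal-component correspondence, the equivalence of the differential-form Sobolev (semi)norms with the vector-field ones, and the surjectivity $\exterioderivative\mathcal P_{r}\Lambda^{n-1}(T')=\mathcal P_{r-1}\Lambda^{n}(T')$ — is routine bookkeeping that I would not spell out in detail.
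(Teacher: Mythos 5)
Your proposal is correct and takes essentially the same route as the paper, which presents Theorem~\ref{prop:application:divergence:BDM} precisely as the specialization $n=3$, $p=2$, $k=n-1$ of the Scott--Zhang construction of Section~\ref{sec:scottzhang} with the full polynomial family $\mathcal P_{r}\Lambda^{2}$, translated through the standard vector proxy and the identification $\mathcal W^{2,2}\Lambda^{2}(\Omega,\Gamma)\cong\mathbf H(\divergence,\Gamma)$. Your choice to argue from Theorem~\ref{prop:scottzhangestimate} directly, bounding the two best-approximation terms separately via \eqref{math:bestapproximation}--\eqref{math:bestapproximation:exteriorderivative} and $\exterioderivative\mathcal P_{r}\Lambda^{n-1}(T')=\mathcal P_{r-1}\Lambda^{n}(T')$, is exactly what is needed to obtain the independent ranges $m\in[0,r+1]$, $l\in[0,r]$ (the corollary as stated ties $l=\max(0,m-1)$), so no gap remains.
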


\begin{theorem} \label{prop:application:divergence:RT}
    There exist linear mappings 
    \begin{gather*}
        \mathcal J_{\mathbf R\mathbf T_{r}(\mathcal T,\mathcal U)}     : \mathbf H(\divergence,\Gamma) \rightarrow     \mathbf R\mathbf T_{r}(\mathcal T,\mathcal U),
    \end{gather*}
    such that for $m \in [0,r+1]$, $l \in [0,r+1]$, 
    all tetrahedra $T \in \mathcal T$,
    and all $\mathbf{u} \in \mathbf H(\divergence,\Gamma)$ we have 
    \begin{gather*}
        \| \mathbf{u} - \mathcal J_{\mathbf R\mathbf T_{r}(\mathcal T,\mathcal U)} \mathbf{u} \|_{\mathbf L^{2}(T)}
        \leq 
        C        
        \sum_{ \substack{ T' \in \mathcal T \\ \dim(T') = 3 \\ T' \cap T \neq \emptyset } }
        h^{m}_{T}\| \mathbf{u} \|_{\mathbf H^{m}(T')} + h^{l+1}_{T}\| \divergence \mathbf{u} \|_{W^{l,2}(T')}
    \end{gather*}
    whenever the right-hand side is well-defined. 
    Here, the constant $C > 0$ depends only on the polynomial degree $r$ and the shape measure of $\mathcal T$.
\end{theorem}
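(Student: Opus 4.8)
The plan is to obtain Theorem~\ref{prop:application:divergence:RT}, together with Theorem~\ref{prop:application:divergence:BDM}, as the transcription of Theorem~\ref{prop:scottzhangestimate} and its corollary into the language of three-dimensional vector analysis, by means of the standard correspondence between differential forms on a subset of $\mathbb R^{3}$ and scalar and vector fields. I would fix the proxy identifying the $2$-form $u_1\,\dif x_2\wedge\dif x_3+u_2\,\dif x_3\wedge\dif x_1+u_3\,\dif x_1\wedge\dif x_2$ with the vector field $\mathbf u=(u_1,u_2,u_3)$ and the $3$-form $f\,\dif x_1\wedge\dif x_2\wedge\dif x_3$ with the scalar $f$. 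Under this identification the exterior derivative on $2$-forms is the divergence, so $\mathcal W^{2,2}\Lambda^{2}(\Omega)\cong\mathbf H(\divergence)$; and, under the standing regularity assumptions of Section~\ref{sec:applications}, the local integration-by-parts condition \eqref{math:partialboundaryconditions} becomes exactly the identity used there to define $\mathbf H(\divergence,\Gamma)$, so that $\mathcal W^{2,2}\Lambda^{2}(\Omega,\Gamma)\cong\mathbf H(\divergence,\Gamma)$. The proxy is, up to a fixed factor depending only on $n=3$, an isometry between $\|\cdot\|_{W^{m,p}\Lambda^{2}}$ and $\|\cdot\|_{\mathbf H^{m}}$ and between $\|\cdot\|_{W^{l,p}\Lambda^{3}}$ and $\|\cdot\|_{W^{l,2}}$, so the quantities appearing in the corollary to Theorem~\ref{prop:scottzhangestimate} match those in the statements to be proved.

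Next I would record the identifications of the finite element spaces under the degree convention fixed in Section~\ref{sec:applications}, according to which a space of degree $r$ contains all polynomial vector fields of degree at most $r$: one has $\mathbf B\mathbf D\mathbf M_{r}(\mathcal T)\cong\mathcal P_{r}\Lambda^{2}(\mathcal T)$ and, since $\mathcal P_{r+1}^{-}\Lambda^{2}$ is the smallest trimmed space containing $\mathcal P_{r}\Lambda^{2}$ simplexwise, $\mathbf R\mathbf T_{r}(\mathcal T)\cong\mathcal P_{r+1}^{-}\Lambda^{2}(\mathcal T)$; the spaces with boundary conditions correspond likewise, using the identity $\mathcal P\Lambda^{k}(\mathcal T,\mathcal U)=\mathcal P\Lambda^{k}(\mathcal T)\cap\mathcal W^{\infty,\infty}\Lambda^{k}(\Omega,\Gamma)$ from the Remark in Section~\ref{sec:femspaces}. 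I would then \emph{define} the operator $\mathcal J_{\mathbf R\mathbf T_{r}(\mathcal T,\mathcal U)}$ — and analogously $\mathcal J_{\mathbf B\mathbf D\mathbf M_{r}(\mathcal T,\mathcal U)}$ — to be the Scott-Zhang interpolant $\mathcal J_{\mathcal T}$ of Section~\ref{sec:scottzhang}, instantiated with $k=2$, $p=2$ and the family $\mathcal P_{r+1}^{-}\Lambda^{\bullet}$ (respectively $\mathcal P_{r}\Lambda^{\bullet}$), transported through the proxy. The observation recorded immediately after the construction of $\mathcal J_{\mathcal T}$ — that it restricts to a map $\mathcal W^{p,p}\Lambda^{k}(\Omega,\Gamma)\to\mathcal P\Lambda^{k}(\mathcal T,\mathcal U)$ — then shows at once that $\mathcal J_{\mathbf R\mathbf T_{r}(\mathcal T,\mathcal U)}$ maps $\mathbf H(\divergence,\Gamma)$ into $\mathbf R\mathbf T_{r}(\mathcal T,\mathcal U)$.

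Finally I would invoke the corollary to Theorem~\ref{prop:scottzhangestimate}. For the Raviart-Thomas case the underlying polynomial degree in finite element exterior calculus is $r+1$ and the space is of trimmed type, so the corollary applies with $m\in[0,r+1]$; and because $\exterioderivative\mathcal P_{r+1}^{-}\Lambda^{2}\subseteq\mathcal P_{r}\Lambda^{3}$, the exterior-derivative Bramble--Hilbert estimate \eqref{math:bramblehilbert:exteriorderivative} is available for $l\in[0,r+1]$. Inspection of the proof of the corollary shows that the regularity of $\omega$ controls only the first error contribution and the regularity of $\exterioderivative\omega$ only the second, so the coupling $l=\max(0,m-1)$ may be loosened to an independent choice of $l\in[0,r+1]$ at the cost of assuming $\divergence\mathbf u\in W^{l,2}$ on the patch separately. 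Passing from the seminorms on the right-hand side to full norms and replacing $h_{T'}$ by $h_{T}$ — the latter costing only a factor controlled by $\mu(\mathcal T)$, since adjacent simplices have comparable diameters — and translating through the proxy yields the asserted estimate, with the constant inheriting its dependence on $r$ and $\mu(\mathcal T)$ alone. The Brezzi--Douglas--Marini case is the same, with the exterior-calculus degree $r$, the full polynomial type, and $\exterioderivative\mathcal P_{r}\Lambda^{2}\subseteq\mathcal P_{r-1}\Lambda^{3}$, which restricts the admissible divergence smoothness to $l\in[0,r]$. The one place calling for genuine care is this final bookkeeping — matching the two degree conventions and confirming the decoupling of the two seminorm terms — while the remainder is the routine passage between differential forms and their vector proxies.
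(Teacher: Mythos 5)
Your proposal is correct and is essentially the argument the paper intends: the paper offers no explicit proof of the application theorems beyond declaring them special cases, and your translation via the vector proxy (with $\mathbf R\mathbf T_{r}(\mathcal T)\cong\mathcal P_{r+1}^{-}\Lambda^{2}(\mathcal T)$ and $k=2$, $p=2$) together with Theorem~\ref{prop:scottzhangestimate} and its corollary is exactly the intended route. You also correctly identify the one nontrivial bookkeeping point — that the stated corollary couples $l=\max(0,m-1)$, so the independent ranges $m\in[0,r+1]$, $l\in[0,r+1]$ for Raviart--Thomas require going back to Theorem~\ref{prop:scottzhangestimate} and noting that the $\omega$- and $\exterioderivative\omega$-terms are estimated separately, with $\exterioderivative\mathcal P_{r+1}^{-}\Lambda^{2}(T)=\mathcal P_{r}\Lambda^{3}(T)$ giving the full order $h^{l+1}$ for the divergence term.
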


\begin{theorem} \label{prop:application:curl:fst}
    There exist linear mappings 
    \begin{gather*}
        \mathcal J_{\mathbf{Ned}^{\mathrm{fst}}_{r}(\mathcal T,\mathcal U)} : \mathbf H(\curl,\Gamma) \rightarrow \mathbf{Ned}^{\mathrm{fst}}_{r}(\mathcal T,\mathcal U),
    \end{gather*}
    such that for $m \in [0,r+1]$, $l \in [0,r+1]$, 
    all tetrahedra $T \in \mathcal T$,
    and all $\mathbf{u} \in \mathbf H(\curl,\Gamma)$ we have 
    \begin{gather*}
        \| \mathbf{u} - \mathcal J_{\mathbf{Ned}^{\mathrm{fst}}_{r}(\mathcal T,\mathcal U)} \mathbf{u} \|_{\mathbf L^{2}(T)}
        \leq 
        C        
        \sum_{ \substack{ T' \in \mathcal T \\ \dim(T') = 3 \\ T' \cap T \neq \emptyset } }
        h^{m}_{T}\| \mathbf{u} \|_{\mathbf H^{m}(T')} + h^{l+1}_{T}\| \curl \mathbf{u} \|_{\mathbf H^{l}(T')}
    \end{gather*}
    whenever the right-hand side is well-defined. 
    Here, the constant $C > 0$ depends only on the polynomial degree $r$ and the shape measure of $\mathcal T$.
\end{theorem}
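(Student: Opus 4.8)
The plan is to deduce this theorem from the Scott--Zhang interpolant of Section~\ref{sec:scottzhang} and its error estimate, Theorem~\ref{prop:scottzhangestimate}, via the usual translation between three-dimensional vector analysis and the calculus of differential forms.

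First I would fix the dictionary. Identifying $1$-forms with vector fields through the Euclidean metric, the exterior derivative on $1$-forms becomes the curl; hence $\mathcal W^{2,2}\Lambda^1(\Omega) \cong \mathbf H(\curl)$, and, comparing the local integration-by-parts condition~\eqref{math:partialboundaryconditions} (with $n=3$, $k=1$) to the definition of $\mathbf H(\curl,\Gamma)$ given above, $\mathcal W^{2,2}\Lambda^1(\Omega,\Gamma) \cong \mathbf H(\curl,\Gamma)$. On the discrete side the first-kind N\'ed\'elec space of degree $r$, being the curl-conforming finite element space that reproduces $\mathcal P_{r}$ vector fields, corresponds to $\mathcal P_{r+1}^{-}\Lambda^1(\mathcal T)$, and by the Remark in Section~\ref{sec:femspaces} its subspace with vanishing tangential trace along $\mathcal U$ corresponds to $\mathcal P_{r+1}^{-}\Lambda^1(\mathcal T,\mathcal U)$. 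I would therefore apply the construction of Section~\ref{sec:scottzhang} with the parameters $p=2$, $k=1$, polynomial degree $r+1$, and the trimmed family $\mathcal P_{r+1}^{-}\Lambda^{\bullet}$ (admissible because $0<k<n$), and define $\mathcal J_{\mathbf{Ned}^{\mathrm{fst}}_{r}(\mathcal T,\mathcal U)}$ to be the vector proxy of the resulting operator $\mathcal J_{\mathcal T}$ restricted to $\mathcal W^{2,2}\Lambda^1(\Omega,\Gamma)$. That this restriction takes values in $\mathcal P_{r+1}^{-}\Lambda^1(\mathcal T,\mathcal U) \cong \mathbf{Ned}^{\mathrm{fst}}_{r}(\mathcal T,\mathcal U)$ is precisely the observation recorded just before Theorem~\ref{prop:scottzhangestimate}.

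For the error bound I would invoke the second inequality of Theorem~\ref{prop:scottzhangestimate} with $p=2$ and estimate its two families of terms separately (this is why I go to that theorem directly rather than through its corollary, which couples the two smoothness exponents). The term $\|\omega-\Pi_{T'}\omega\|_{L^{2}\Lambda^1(T')}$ is bounded by $C h_{T'}^{m}|\omega|_{W^{m,2}\Lambda^1(T')}$ for $m\in[0,r+1]$, using the quasi-best-approximation property~\eqref{math:bestapproximation} and a Bramble--Hilbert argument together with $\mathcal P_{r}\Lambda^1(T') \subseteq \mathcal P_{r+1}^{-}\Lambda^1(T')$. The term $h_{T'}\|\exterioderivative\omega-\exterioderivative\Pi_{T'}\omega\|_{L^{2}\Lambda^2(T')}$ is bounded by $C h_{T'}^{\,l+1}|\exterioderivative\omega|_{W^{l,2}\Lambda^2(T')}$ for $l\in[0,r+1]$ via~\eqref{math:bestapproximation:exteriorderivative}: one must bound the distance in $L^{2}\Lambda^2(T')$ from $\exterioderivative\omega$ to $\exterioderivative\mathcal P_{r+1}^{-}\Lambda^1(T')$, for which I would use that $\exterioderivative\omega$ is closed, that $\exterioderivative\mathcal P_{r+1}^{-}\Lambda^1(T')$ equals the space of closed $2$-forms inside $\mathcal P_{r+1}^{-}\Lambda^2(T')$ by exactness of the trimmed polynomial de~Rham complex on a simplex, and that a $\exterioderivative$-commuting projection onto $\mathcal P_{r+1}^{-}\Lambda^2(T')$ (transported from a reference simplex, reproducing $\mathcal P_{r}\Lambda^2$) sends the closed form $\exterioderivative\omega$ to a closed polynomial $2$-form, hence to an admissible competitor, with approximation error of order $h_{T'}^{l}$. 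Translating back, $\omega$ becomes $\mathbf u$, $\exterioderivative\omega$ becomes $\curl\mathbf u$, and $W^{l,2}\Lambda^2$ becomes $\mathbf H^{l}$. Summing over the finitely many tetrahedra $T'$ meeting $T$, replacing $h_{T'}$ by $h_{T}$ using that ratios of diameters of adjacent simplices are controlled by $\mu(\mathcal T)$ (Section~\ref{sec:triangulation}), and bounding seminorms by norms then yields the claimed estimate with a constant depending only on $r$ and $\mu(\mathcal T)$.

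The main obstacle I anticipate is exactly this decoupled curl estimate with the full order $h^{l}$ up to $l=r+1$: one cannot approximate $\exterioderivative\omega$ by an arbitrary polynomial $2$-form, because its potential need not lie in $\mathcal P_{r+1}^{-}\Lambda^1(T')$, so the argument hinges on identifying the image $\exterioderivative\mathcal P_{r+1}^{-}\Lambda^1(T')$ with the closed polynomial $2$-forms of the trimmed space and on producing a closed polynomial approximant via a $\exterioderivative$-commuting projection (equivalently, a polynomial Poincar\'e operator with the appropriate regularity) without losing approximation order. Everything else reduces to scaling arguments on a reference simplex, exactly as in the proofs of Theorems~\ref{prop:biorthogonal}, \ref{prop:extendeddof}, and~\ref{prop:scottzhangestimate}.
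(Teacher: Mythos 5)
Your proposal is correct, and it follows the route the paper intends: specialize the Scott--Zhang construction of Section~\ref{sec:scottzhang} to $p=2$, $k=1$ and the trimmed family of degree $r+1$, identify $\mathcal P_{r+1}^{-}\Lambda^{1}(\mathcal T,\mathcal U)$ with $\mathbf{Ned}^{\mathrm{fst}}_{r}(\mathcal T,\mathcal U)$ via vector proxies, and use the mapping property $\mathcal J_{\mathcal T}:\mathcal W^{2,2}\Lambda^{1}(\Omega,\Gamma)\to\mathcal P\Lambda^{1}(\mathcal T,\mathcal U)$ noted before Theorem~\ref{prop:scottzhangestimate}. The paper itself offers no written proof of the application theorems (they are asserted as special cases), and its corollary to Theorem~\ref{prop:scottzhangestimate} only yields the coupled exponents $l=\max(0,m-1)\leq r$; your decision to bypass the corollary and estimate the two terms of Theorem~\ref{prop:scottzhangestimate} separately is exactly what is needed to reach the decoupled range $l\in[0,r+1]$ claimed in the statement. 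Your argument for the curl term is the right one and is sound: by \eqref{math:bestapproximation:exteriorderivative} one must bound the distance from $\exterioderivative\omega$ to $\exterioderivative\mathcal P_{r+1}^{-}\Lambda^{1}(T')$, which by exactness of the trimmed complex on a simplex equals the closed forms of $\mathcal P_{r+1}^{-}\Lambda^{2}(T')$ (in fact the closed forms of $\mathcal P_{r}\Lambda^{2}(T')$), and a bounded projection that commutes with $\exterioderivative$ and reproduces $\mathcal P_{r}\Lambda^{2}$ (or, even more simply, the componentwise Dupont--Scott operator, which commutes with partial derivatives and hence preserves divergence-free vector proxies) produces a closed polynomial competitor with error $Ch_{T'}^{l}\,|\exterioderivative\omega|_{W^{l,2}\Lambda^{2}(T')}$ for $l\leq r+1$. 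Together with the $h_{T'}^{m}$ bound for the first term ($m\leq r+1$, since $\mathcal P_{r}\Lambda^{1}\subseteq\mathcal P_{r+1}^{-}\Lambda^{1}$), the comparability of $h_{T'}$ and $h_{T}$ for adjacent simplices, and the bounded overlap of the patches, this gives the stated estimate; I see no gap.
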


\begin{theorem} \label{prop:application:curl:snd}
    There exist linear mappings 
    \begin{gather*}
        \mathcal J_{\mathbf{Ned}^{\mathrm{snd}}_{r}(\mathcal T,\mathcal U)} : \mathbf H(\curl,\Gamma) \rightarrow \mathbf{Ned}^{\mathrm{snd}}_{r}(\mathcal T,\mathcal U),
    \end{gather*}
    such that for $m \in [0,r+1]$, $l \in [0,r]$, 
    all tetrahedra $T \in \mathcal T$,
    and all $\mathbf{u} \in \mathbf H(\curl,\Gamma)$ we have 
    \begin{gather*}
        \| \mathbf{u} - \mathcal J_{\mathbf{Ned}^{\mathrm{snd}}_{r}(\mathcal T,\mathcal U)} \mathbf{u} \|_{\mathbf L^{2}(T)}
        \leq 
        C        
        \sum_{ \substack{ T' \in \mathcal T \\ \dim(T') = 3 \\ T' \cap T \neq \emptyset } }
        h^{m}_{T}\| \mathbf{u} \|_{\mathbf H^{m}(T')} + h^{l+1}_{T}\| \curl \mathbf{u} \|_{\mathbf H^{l}(T')}
    \end{gather*}
    whenever the right-hand side is well-defined. 
    Here, the constant $C > 0$ depends only on the polynomial degree $r$ and the shape measure of $\mathcal T$.
\end{theorem}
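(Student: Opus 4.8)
The plan is to read Theorem~\ref{prop:application:curl:snd} as the instance $n=3$, $k=1$, $p=2$ of the Scott--Zhang theory developed in Section~\ref{sec:scottzhang}, translated through the standard three-dimensional vector proxies. First I would set up the dictionary: the proxy identification $\mathbf u \leftrightarrow \omega$ carries $\mathbf L^{2}(\Omega)$ onto $L^{2}\Lambda^{1}(\Omega)$, the curl onto the exterior derivative $\exterioderivative\colon\Lambda^{1}\to\Lambda^{2}$, and hence $\mathbf H(\curl)$ onto $\mathcal W^{2,2}\Lambda^{1}(\Omega)$ with comparable norms; likewise $\mathbf H^{m}(\Omega)$ corresponds to $W^{m,2}\Lambda^{1}(\Omega)$ and $\mathbf H^{l}(\Omega)$ to $W^{l,2}\Lambda^{2}(\Omega)$. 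Under the same identification $\mathbf{Ned}^{\mathrm{snd}}_{r}(\mathcal T)$ is exactly the untrimmed space $\mathcal P_{r}\Lambda^{1}(\mathcal T)$ — piecewise full-degree-$r$ polynomial $1$-forms with single-valued tangential trace across facets — and, since the tangential trace of a $1$-form onto a $2$-simplex $F$ is $\trace_{F}$, the space $\mathbf{Ned}^{\mathrm{snd}}_{r}(\mathcal T,\mathcal U)$ corresponds to $\mathcal P_{r}\Lambda^{1}(\mathcal T,\mathcal U)$.

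The one point that needs genuine care is that $\mathbf H(\curl,\Gamma)$ corresponds to $\mathcal W^{2,2}\Lambda^{1}(\Omega,\Gamma)$. Here $n-k-1=1$, so the test objects in the localized identity~\eqref{math:partialboundaryconditions} are $1$-forms in $C^{\infty}_{c}\Lambda^{1}(B_{\rho}(x))$, i.e. proxy vector fields; I would check that the proxy translation sends $\omega\wedge\exterioderivative\eta$ and $\exterioderivative\omega\wedge\eta$ (up to sign and the volume form) to $\langle\mathbf u,\curl\phi\rangle$ and $\langle\curl\mathbf u,\phi\rangle$, matching~\eqref{math:partialboundaryconditions} with the vector-analytic condition of the present section, and I would invoke the standard fact (as in~\cite{GMM}) that the localized and global formulations of partial boundary conditions are equivalent via a partition of unity. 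This equivalence, made uniform in the constants involved, is the main obstacle; everything after it is bookkeeping.

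With the dictionary in place, I would define $\mathcal J_{\mathbf{Ned}^{\mathrm{snd}}_{r}(\mathcal T,\mathcal U)}$ to be the Scott--Zhang interpolant $\mathcal J_{\mathcal T}$ of Section~\ref{sec:scottzhang} for $p=2$, $k=1$, read through the proxy identification; by the discussion there it is linear and maps $\mathcal W^{2,2}\Lambda^{1}(\Omega,\Gamma)$ into $\mathcal P_{r}\Lambda^{1}(\mathcal T,\mathcal U)$, i.e. $\mathbf H(\curl,\Gamma)$ into $\mathbf{Ned}^{\mathrm{snd}}_{r}(\mathcal T,\mathcal U)$. For the error bound I would start from the second inequality of Theorem~\ref{prop:scottzhangestimate},
\begin{align*}
 \| \omega - \mathcal J_{\mathcal T}\omega \|_{L^{2}\Lambda^{1}(T)}
 \leq
 C_{\mathcal J,\mathcal U}
 \sum_{ \substack{ T' \in \Delta_{3}(\mathcal T) \\ T \cap T' \neq \emptyset } }
 \| \omega - \Pi_{T'} \omega \|_{L^{2}\Lambda^{1}(T')}
 +
 h_{T'}
 \| \exterioderivative \omega - \exterioderivative \Pi_{T'} \omega \|_{L^{2}\Lambda^{2}(T')},
\end{align*}
and then bound each summand by polynomial approximation on the single simplex $T'$: by~\eqref{math:bestapproximation} together with a classical Bramble--Hilbert estimate, $\| \omega - \Pi_{T'}\omega \|_{L^{2}\Lambda^{1}(T')} \leq C h_{T'}^{m}\,|\omega|_{W^{m,2}\Lambda^{1}(T')}$ for $m\in[0,r+1]$ (the space $\mathcal P_{r}\Lambda^{1}(T')$ has approximation order $r+1$), and by~\eqref{math:bestapproximation:exteriorderivative} together with $\exterioderivative\mathcal P_{r}\Lambda^{1}(T')\subseteq\mathcal P_{r-1}\Lambda^{2}(T')$, $\| \exterioderivative\omega - \exterioderivative\Pi_{T'}\omega \|_{L^{2}\Lambda^{2}(T')} \leq C h_{T'}^{l}\,|\exterioderivative\omega|_{W^{l,2}\Lambda^{2}(T')}$ for $l\in[0,r]$ — the two orders chosen independently, which is why I work from Theorem~\ref{prop:scottzhangestimate} rather than its corollary. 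Finally I would use shape regularity to replace $h_{T'}$ by $C h_{T}$ for the finitely many $T'$ meeting $T$, bound each seminorm by the corresponding full norm, and translate back through the proxy dictionary (with $\curl\leftrightarrow\exterioderivative$), which yields exactly the claimed estimate; the companion theorems for $\mathbf{Ned}^{\mathrm{fst}}_{r}$, $\mathbf B\mathbf D\mathbf M_{r}$, $\mathbf R\mathbf T_{r}$ follow identically upon replacing $\mathcal P_{r}\Lambda^{1}$ by $\mathcal P_{r+1}^{-}\Lambda^{1}$, $\mathcal P_{r}\Lambda^{2}$, $\mathcal P_{r+1}^{-}\Lambda^{2}$ respectively, which accounts for their different admissible ranges of $m$ and $l$.
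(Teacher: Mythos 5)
Your proposal is correct and matches the paper's intended argument: Theorem~\ref{prop:application:curl:snd} is presented there precisely as the $n=3$, $k=1$, $p=2$ specialization of Theorem~\ref{prop:scottzhangestimate} under the standard vector-proxy dictionary, with $\mathbf{Ned}^{\mathrm{snd}}_{r}(\mathcal T,\mathcal U)$ identified with the untrimmed family $\mathcal P_{r}\Lambda^{1}(\mathcal T,\mathcal U)$, the boundary condition $\mathbf H(\curl,\Gamma)\leftrightarrow\mathcal W^{2,2}\Lambda^{1}(\Omega,\Gamma)$ checked exactly as you indicate, and the independent ranges of $m$ and $l$ obtained, as you do, by working from the theorem itself (rather than its corollary) together with \eqref{math:bramblehilbert}--\eqref{math:bestapproximation:exteriorderivative}. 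One cosmetic touch-up: the $h_{T'}^{l}$ bound on the derivative term should be justified by inserting $\psi=P_{T'}\omega$ into the infimum of \eqref{math:bestapproximation:exteriorderivative} and invoking \eqref{math:bramblehilbert:exteriorderivative} (equivalently, by noting via the Koszul homotopy that $\exterioderivative\mathcal P_{r}\Lambda^{1}(T')$ contains every closed form in $\mathcal P_{r-1}\Lambda^{2}(T')$), since the inclusion $\exterioderivative\mathcal P_{r}\Lambda^{1}(T')\subseteq\mathcal P_{r-1}\Lambda^{2}(T')$ alone does not yield the stated approximation order.
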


\end{document}